\documentclass[reqno]{amsart}

\usepackage{amsfonts}
\usepackage{amssymb}
\usepackage{mathabx}
\usepackage{amscd}
\usepackage{pictexwd,dcpic}
\usepackage{graphicx}
\usepackage{xcolor}
\usepackage{tikz}

\usepackage{soul}

\usepackage{enumitem}
\usepackage{fullpage}
\usepackage{hyperref}
\hypersetup{
    colorlinks=true,
    linkcolor=blue,
    filecolor=magenta,
    urlcolor=cyan,
    citecolor=cyan,}
\usepackage[nameinlink,capitalize]{cleveref}

\definecolor{darkgreen}{RGB}{18, 80, 10}
\definecolor{gold}{rgb}{0.85,.66,0}

\title{On Rees algebras of linearly presented ideals and modules}
\author{Alessandra Costantini}
\address{Department of Mathematics, Oklahoma State University, 401 MSCS, Stillwater OK 74078}
\email{alecost@okstate.edu}
\author{Edward F. Price III}
\address{Department of Mathematics and Computer Science, Colorado College, 1112 N Nevada Ave, Colorado Springs, \indent CO 80903}
\email{eprice@coloradocollege.edu}
\author{Matthew Weaver}
\address{Department of Mathematics, University of Notre Dame, 255 Hurley Bldg, Notre Dame, IN 46556}
\email{mweaver6@nd.edu}

\date{}	
\newtheorem{thmx}{Theorem}

\newtheorem{thm}{Theorem}[section]

\newtheorem{prop}[thm]{Proposition}
\newtheorem{lemma}[thm]{Lemma}
\newtheorem{cor}[thm]{Corollary}
\numberwithin{equation}{section}


\theoremstyle{definition}
\newtheorem{rem}[thm]{Remark}
\newtheorem{set}[thm]{Setting}
\newtheorem{notat}[thm]{Notation}
\newtheorem{defn}[thm]{Definition}
\newtheorem{quest}[thm]{Question}
\newtheorem{ex}[thm]{Example}


\def\a{\mathfrak{a}}
\def\A{\mathcal{A}}
\def\J{\mathcal{J}}
\def\K{\mathcal{K}}
\def\L{\mathcal{L}}
\def\m{\mathfrak{m}}
\def\p{\mathfrak{p}}
\def\q{\mathfrak{q}}
\def\R{\mathcal{R}}
\def\S{\mathcal{S}}
\def\F{\mathcal{F}}

\def\ann{\mathop{\rm ann}}

\def\dim{\mathop{\rm dim}}
\def\fitt{\mathop{\rm Fitt}}
\def\grade{\mathop{\rm grade}}
\def\depth{\mathop{\rm depth}}
\def\hgt{\mathop{\rm ht}}

\def\spec{\mathop{\rm Spec}}

\def\rank{\mathop{\rm rank}}

\begin{document}
\maketitle

\begin{abstract}
   Let $I$ be a perfect ideal of height two in $R=k[x_1, \ldots, x_d]$ and let $\varphi$ denote its Hilbert-Burch matrix. When $\varphi$ has linear entries, the algebraic structure of the Rees algebra $\R(I)$ is well-understood under the additional assumption that the minimal number of generators of $I$ is bounded locally up to codimension $d-1$. In the first part of this article, we determine the defining ideal of $\R(I)$ under the weaker assumption that such condition holds only up to codimension $d-2$, generalizing previous work of P.~H.~L.~Nguyen. In the second part, we use generic Bourbaki ideals to extend our findings to Rees algebras of linearly presented modules of projective dimension one.
\end{abstract}

\section{Introduction}
 Given an ideal $I$ in a Noetherian ring $R$, the Rees algebra of $I$ is the graded algebra $\R(I) \coloneq R[It] =R\oplus It \oplus I^2t^2 \oplus \cdots$ viewed as a subring of $R[t]$, where $t$ is an indeterminate. As $\R(I)$ encodes information on the asymptotic growth of the powers of $I$, it has proven to be an invaluable asset to commutative algebraists within the study of integral closure, reductions, and multiplicities. From a geometric perspective, $\mathrm{Proj}(\R(I))$ is the blowup of an affine scheme along $V(I)$, hence Rees algebras represent an essential tool to understand singularities. 
 In fact, a celebrated result of Hironaka \cite{Hironaka} uses repeated blowups to construct resolutions of singularities in characteristic zero. As successive blowups of a singular scheme along disjoint subschemes $V(I_1), \ldots, V(I_r)$ correspond to the Rees algebra $\R(I_1 \oplus \cdots \oplus I_r)$ of the module $I_1 \oplus \cdots \oplus I_r$, one is motivated to study Rees algebras of modules as well. Moreover, Rees algebras of both ideals and modules arise as homogeneous coordinate rings of graphs of rational maps, thus understanding their algebraic structure gives insight into the study of birationality of algebraic varieties.

 As the blowup construction describes $\mathrm{Proj}(\R(I))$ via parametric equations, a fundamental problem is to find the implicit equations of blowups. This is a special case of the so-called \textit{implicitization problem}, namely the replacement of parametric equations with a system of implicit equations representing the same geometric object. In dimensions $d=2$ and $d=3$, this problem is referred to as the \textit{moving curve} and \textit{moving surface} problem respectively \cite{SGD97}, with a wealth of applications to geometric model theory and computer-aided design. Algebraically, this translates to finding a presentation of the Rees algebra $\R(I)$ as a quotient of a polynomial ring over $R$. The ideal $\J$ defining this quotient ring is called the \textit{defining ideal} of the Rees algebra. The generators of this ideal, the \textit{defining equations} of $\R(I)$, are often elusive and difficult to determine. For instance, the general problem of determining the defining ideal of Rees algebras remains open even for three-generated ideals in $k[x,y]$ \cite{Buse}.

 Whereas $\R(I)$ encodes data regarding the powers of $I$, its defining ideal $\J$ encodes data regarding the \textit{syzygies} of every power of $I$. Hence one often restricts to classes of ideals with a rich algebraic structure, specifically those with a well-understood free resolution. In particular, Rees algebras of grade-two perfect ideals and grade-three perfect Gorenstein ideals have been studied extensively under various assumptions using the Hilbert-Burch theorem \cite[20.15]{Eisenbud} and the Buchsbaum-Eisenbud theorem \cite{BE77} respectively (see e.g. \cite{{BM16},{DRS18},{GST18},{KPU17},{MU96},{Nguyen14},{Nguyen17},{RS14},{SUV93},{UV93},{Weaver1},{Weaver2}}).

 In the first part of this paper, we consider a linearly presented perfect ideal $I$ of height two in $R=k[x_1, \ldots, x_d]$, where $k$ is a field. Most of the existing literature in this setting analyzes the structure of the Rees algebra $\R(I)$ under the additional assumption that $I$ satisfies the so-called \textit{$G_d$ condition}, introduced by Artin and Nagata in \cite{AN72}. For a positive integer $s$, $I$ is said to satisfy $G_s$ if, for every $\p \in V(I)$ with $\dim R_{\p} \leq s-1$, $I_{\p}$ is minimally generated by at most $\dim R_{\p}$ elements. If $I$ satisfies $G_s$ for all $s$, then $I$ is said to satisfy $G_\infty$. Equivalently, $I$ satisfies $G_s$ if the $i^{\text{th}}$ Fitting ideal of $I$, $\fitt_i(I)$, satisfies $\hgt \fitt_i(I) \geq i+1$, for all $1 \leq i \leq s-1$. This condition thus imposes a constraint on a presentation matrix of $I$, which when $s=d$, crucially simplifies the problem of determining the defining ideal of $\R(I)$ \cite{{BM16},{MU96},{SUV93},{UV93}}.  
 
Whereas the $G_d$ condition is certainly an asset in the search for the defining equations, there are many ideals of great interest which do not satisfy this condition. For example, many determinantal ideals of generic height in Noetherian rings of sufficiently large dimension $d$ do not satisfy $G_{d}$; see \cite[3.3--3.5]{CP22}. Moreover, there are several examples of height-two perfect ideals in $R=k[x_1, \ldots, x_d]$ which do not satisfy $G_d$, even under the additional assumption of linear presentation. For instance, given $t$ linear forms $f_1, \ldots, f_t$ with $t \geq d \geq 3$, the ideal generated by all $(t-1)$-fold products of the $f_i$'s is a linearly presented perfect ideal of height two so long as any three of the $f_i$'s form a regular sequence, but only satisfies $G_d$ if any $d$ of the $f_i$'s do; see \cite[4.5]{CDG22}. Moreover, a linearly presented ideal $I \subset k[x,y,z]$ which is an intersection of powers of height-two prime ideals is always height-two perfect, but satisfies $G_3$ if and only if it is radical; \cite[4.2]{DRS18}. As such, when searching for the defining equations of the Rees algebra, there is significant motivation and necessity to develop techniques for situations where the $G_{d}$ condition is not satisfied, even within the well-studied class of height-two perfect ideals in $R=k[x_1, \ldots, x_d]$.

Inspired by work of P.~H.~L.~Nguyen \cite{{Nguyen14},{Nguyen17}}, we study the Rees algebra of linearly presented height-two perfect ideals which satisfy $G_{d-1}$ but not $G_{d}$. Our main result for Rees algebras of ideals is the following.

\begin{thmx}[{\rm \Cref{defideal}}] \label{introidealtheorem}
   Let $R=k[x_1,\ldots,x_d]$ be a standard-graded polynomial ring of dimension $d\geq 3$ over a field $k$, and let $I$ be a perfect $R$-ideal of height $2$ generated by $n \geq d+1$ elements, which satisfies $G_{d-1}$, but not $G_d$. Assume that $I$ has a presentation matrix $\varphi$ consisting of linear entries in $R$ with $I_1(\varphi) = (x_1,\ldots,x_d)$ and that (after possibly a change of coordinates) the matrix $\varphi$ has rank 1 modulo $(x_1, \ldots, x_{d-1})$. The Rees algebra $\R(I) \cong R[T_1, \ldots, T_n]/\J$ is a Cohen-Macaulay domain, with $$\J = (\ell_1, \ldots, \ell_{n-1}) : (x_1,\ldots,x_{d-1}) = (\ell_1, \ldots, \ell_{n-1}) + I_{d}(M),$$ where $\ell_1, \ldots, \ell_{n-1}$ are linear forms in $R[T_1, \ldots, T_n]$ such that $[\ell_1 \ldots \ell_{n-1}]= [T_1 \ldots T_n] \cdot \varphi$, and $M$ is a $d\times (n-1) $ matrix with entries in $k[T_1, \ldots, T_n]$ such that $[\ell_1\ldots \ell_{n-1}] = [x_1\ldots x_{d-1} \,\, x_dT_n] \cdot M$.
\end{thmx}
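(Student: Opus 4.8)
The plan is to present $\R(I)$ as $S/\J$ with $S = R[T_1,\ldots,T_n]$ and to trap $\J$ between two explicit ideals. From the Hilbert--Burch resolution $0 \to R^{n-1} \xrightarrow{\varphi} R^n \to I \to 0$ one gets the linear forms $\ell_1,\ldots,\ell_{n-1}$ via $[\ell_1 \ldots \ell_{n-1}] = [T_1 \ldots T_n]\varphi$, so that $\mathrm{Sym}(I) = S/\L$ with $\L = (\ell_1,\ldots,\ell_{n-1})$ and $\R(I) = S/\J$ with $\L \subseteq \J$. The first step is to build $M$: since $\varphi$ is linear, each $\ell_j = \sum_{i=1}^d x_i\,b_{ij}(\T)$ with $b_{ij}$ linear in the $T$'s, and the hypothesis that $\varphi$ has rank one modulo $(x_1,\ldots,x_{d-1})$ means precisely that the reduction of $\varphi$ equals $x_d$ times a rank-one scalar matrix. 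After the normalizing change of coordinates this forces every $x_d$-coefficient $b_{dj}$ to be divisible by $T_n$, which yields the factorization $[\ell_1 \ldots \ell_{n-1}] = [x_1 \ldots x_{d-1}\ \, x_dT_n]\,M$ with $M$ over $k[T_1,\ldots,T_n]$ (its first $d-1$ rows linear in $\T$, its last row scalar).

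Next I would record two easy inclusions. Fix $d$ columns of $M$, giving a square submatrix $M'$, and set $\Delta = \det M'$. Then $[x_1 \ldots x_{d-1}\ \, x_dT_n]\,M' = (\ell_{j_1},\ldots,\ell_{j_d})$ has entries in $\L$, so multiplying by the adjugate and using $M'\,\mathrm{adj}(M') = \Delta\,\mathrm{Id}$ gives $x_i\Delta \in \L$ for $1 \le i \le d-1$. Hence $I_d(M) \subseteq \L : (x_1,\ldots,x_{d-1})$, and so $\L + I_d(M) \subseteq \L : (x_1,\ldots,x_{d-1})$. Conversely $\R(I) \subseteq R[t]$ is a domain in which $x_1,\ldots,x_{d-1}$ are nonzero, so $\J$ is prime and avoids each $x_i$; thus $x_1 g \in \L \subseteq \J$ forces $g \in \J$, i.e. $\L : (x_1,\ldots,x_{d-1}) \subseteq \J$. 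Combining these,
\[
\L + I_d(M) \ \subseteq\ \L : (x_1,\ldots,x_{d-1}) \ \subseteq\ \J,
\]
and the entire theorem reduces to the single reverse inclusion $\J \subseteq \L + I_d(M)$.

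To close the gap I would prove that $A = S/(\L + I_d(M))$ is a Cohen--Macaulay domain of dimension $d+1$: as $A$ surjects onto the domain $\R(I) = S/\J$, which also has dimension $d+1$, the surjection is forced to be an isomorphism, so $\L + I_d(M) = \J$ and the chain above collapses to equalities, proving all assertions at once. The dimension is controlled by the special fiber $\F(I) = A/\m A = k[T_1,\ldots,T_n]/I_d(M)$, whose dimension is the analytic spread of $I$; the target is $\hgt I_d(M) = n-d$, the generic height for maximal minors of a $d \times (n-1)$ matrix, which gives $\dim \F(I) = d$, hence $\dim A = d+1$ and $\hgt(\L + I_d(M)) = n-1$. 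Cohen--Macaulayness I would obtain by resolving $A$ over $S$ by a complex of length $n-1$, assembled from the Eagon--Northcott complex of $I_d(M)$ and the Koszul data of the $\ell_j$, and proving it acyclic, so that $\mathrm{pd}_S A = n-1 = \hgt(\L + I_d(M))$ and $A$ is Cohen--Macaulay by Auslander--Buchsbaum. The domain property would then follow from unmixedness (a consequence of Cohen--Macaulayness) together with generic reducedness via the Jacobian criterion and the primeness of the determinantal fiber $I_d(M)$.

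The main obstacle is this last step, and the source of the difficulty is that, unlike in the $G_d$ case, $\mathrm{Sym}(I) = S/\L$ is \emph{not} Cohen--Macaulay of the expected dimension: by the Huneke--Rossi formula $\dim \mathrm{Sym}(I) = n \ge d+1$, with a component of dimension $n$ supported exactly on $V(x_1,\ldots,x_{d-1})$, the locus where $I$ fails to be of linear type. Consequently one cannot add the minors to a complete intersection, and both the height computation and the acyclicity must be carried out by hand. Concretely, the delicate points are (i) showing that the structured, non-generic matrix $M$ still attains the generic determinantal height $\hgt I_d(M) = n-d$, which requires exploiting the rank-one reduction and the $G_{d-1}$ hypothesis through a careful localization at $V(x_1,\ldots,x_{d-1})$, and (ii) proving acyclicity of the resolving complex, equivalently the appropriate depth estimates on the ideals of minors of $M$, which is where the bulk of the homological work lies.
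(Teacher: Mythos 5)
Your framing is sound and matches the paper's in outline: the factorization $[\ell_1\ldots\ell_{n-1}]=[x_1\ldots x_{d-1}\ x_dT_n]\cdot M$ (your $M$ is the paper's matrix $B''$, with $I_d(M)=I_{d-1}(B')$ for $B'$ as in the paper), the Cramer's-rule inclusion $\L+I_d(M)\subseteq \L:(x_1,\ldots,x_{d-1})\subseteq \J$, and the reduction of everything to the reverse inclusion are all exactly the paper's first moves. But the route you propose for the reverse inclusion contains a genuine logical gap and leaves the two real theorems unproved. The gap: even granting $\hgt I_d(M)=n-d$, your inference ``$\dim\F = d$, hence $\dim A = d+1$'' is false as stated. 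The special fiber only gives the bound $\dim A \leq \dim R + \dim A/\m A = 2d$; for instance $S/(T_1,\ldots,T_n)\cong R$ is a Cohen--Macaulay graded quotient whose special fiber is zero-dimensional while its dimension is $d$. The statement $\dim A = d+1$, i.e.\ $\hgt(\L+I_d(M))=n-1$, \emph{is} the hard height estimate (the danger being components inside $V(x_1,\ldots,x_{d-1})$, where, as you note, $\S(I)$ has its $n$-dimensional component); it cannot be read off from the fiber. Beyond this, the two ``delicate points'' you defer --- that the structured matrix $M$ attains the generic height $n-d$, and Cohen--Macaulayness via a hand-built Eagon--Northcott/Koszul resolution --- are precisely the content of \Cref{B'hgt} and \Cref{cmhgt}, and no workable method is offered for either; the same goes for the domain property of $A$, which your surjection argument genuinely needs (Cohen--Macaulayness alone does not suffice: $k[x,y]/(xy)$ surjects onto $k[x]$ in equal dimension).

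What is missing is the paper's linchpin: the prime saturation $\J=\L:(x_1,\ldots,x_{d-1})^\infty$ (\Cref{Jasat}), which rests on the description of the non-linear-type locus (\Cref{LTlocus}) and on the fact that $\fitt_{d-1}(I)$ has $(x_1,\ldots,x_{d-1})$ as its \emph{unique} minimal prime (\Cref{minFitt}) --- this is where the hypotheses $G_{d-1}$-but-not-$G_d$ and rank one modulo $(x_1,\ldots,x_{d-1})$ actually do their work. From this saturation, results of Boswell--Mukundan yield that $I_d(M)=I_{d-1}(B')$ is a prime of height $n-d$, hence Cohen--Macaulay by Eagon--Northcott (\Cref{B'hgt}). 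Then, contrary to your remark that ``one cannot add the minors to a complete intersection,'' the paper's key trick is that $\L$ is contained in the complete intersection $\K=(x_1,\ldots,x_{d-1},x_dT_n)$, so $\L:\K$ is a candidate $(n-1)$-residual intersection of $\K$: once $\hgt(\L:\K)\geq n-1$ is shown (a two-case analysis on minimal primes, using \Cref{Jasat} in one case and \Cref{B'hgt} in the other), Huneke--Ulrich theory (\Cref{thmResInt}) delivers \emph{both} the Cohen--Macaulayness of $\L:\K$ \emph{and} the equality $\L:\K=\L+I_d(M)$, with none of the homological work you anticipate (\Cref{cmhgt}). The final identification $\J=\L+I_d(M)$ is then done by comparing the two ideals at the associated primes of $\L+I_d(M)$ --- all minimal of height $n-1$ by Cohen--Macaulayness --- and showing none contains $(x_1,\ldots,x_{d-1})$, which also sidesteps any need to know in advance that $A$ is a domain.
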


 This theorem recovers \cite[5.1~and~5.2]{Nguyen14} in the case that $n=d+1$, as well as \cite[4.6]{Nguyen17} when $d=3$ (see also \cite[3.6]{DRS18} for a more detailed statement when $d=3$). While our methods build upon Nguyen's techniques, a key novel idea within the proof of \Cref{introidealtheorem} is to interpret $\J$ as a prime saturation (see \Cref{Jasat}), using methods developed in \cite{BM16} in a different context. This allow us to remove the assumption that $k$ is algebraically closed in \cite{Nguyen14,Nguyen17}, as well as any constraints on the number of variables; see also \Cref{d+epres}. 
 The matrix $M$ appearing in the statement above is constructed from the so-called \textit{Jacobian dual matrix} $B(\varphi)$ of $\varphi$, introduced by Vasconcelos in \cite{Vasconcelos91}; see \cref{prelimsection} for the technical definition. Moreover, the ideal $I_{d}(M)$ coincides with  $I_{d-1}(B')$, where $B'$ is a submatrix of $B(\varphi)$ obtained by removing one row and one column. We emphasize the similarity of our result with the case of linearly presented height-two perfect ideals satisfying $G_d$, for which the non-linear generators of $\J$ are precisely the elements of $I_{d}(B(\varphi))$ \cite[1.3]{MU96}.

As ideals of positive grade can be identified with torsion-free modules of rank one, it is natural to ask whether \Cref{introidealtheorem} can be extended to the case of Rees algebras of modules of arbitrary rank. We address this question in the second part of this article. Since the ideals in \Cref{introidealtheorem} are perfect of grade two, we consider modules with projective dimension one. By imposing similar restrictions on the presentation matrix of such a module $E$, we obtain an analogous description for its Rees algebra $\R(E)$. Our main result for Rees algebras of modules is the following.

  \begin{thmx}[{\rm \Cref{defidealmodule}}] \label{intromoduletheorem}
   Let $R=k[x_1,\ldots,x_d]$ be a standard-graded polynomial ring of dimension $d\geq 3$ over a field $k$, and let $E$ be a module of projective dimension one and rank $e>0$. Assume that $E$ is generated by $n \geq d+e$ elements and satisfies $G_{d-1}$, but not $G_d$. Furthermore, suppose that $E$ has a presentation matrix $\varphi$ consisting of linear entries in $R$ with $I_1(\varphi) = (x_1,\ldots,x_d)$ and that (after possibly a change of coordinates) the matrix $\varphi$ has rank 1 modulo $(x_1, \ldots, x_{d-1})$. The Rees algebra $\R(E) \cong R[T_1, \ldots, T_n]/\J$ is a Cohen-Macaulay domain, with $$\J = (\ell_1, \ldots, \ell_{n-e}) : (x_1,\ldots,x_{d-1}) = (\ell_1, \ldots, \ell_{n-e}) + I_{d}(M),$$ where $\ell_1, \ldots, \ell_{n-e}$ are linear forms in $R[T_1, \ldots, T_n]$ such that $[\ell_1 \ldots \ell_{n-e}]= [T_1 \ldots T_n] \cdot \varphi$, and $M$ is a $d\times (n-e) $ matrix with entries in $k[T_1, \ldots, T_n]$ such that $[\ell_1\ldots \ell_{n-e}] = [x_1\ldots x_{d-1} \,\, x_dT_n] \cdot M$.
\end{thmx}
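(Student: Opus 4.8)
The plan is to reduce the theorem to the ideal case \Cref{defideal} through the theory of generic Bourbaki ideals of Simis, Ulrich and Vasconcelos. First I would adjoin to $R$ a generic $n\times(e-1)$ matrix of indeterminates $\underline{z}=(z_{ij})$ and localize to form the faithfully flat extension $R'=R(\underline{z})$, setting $E'=E\otimes_R R'$. Fixing a generating set $e_1,\ldots,e_n$ of $E$, the $e-1$ generic combinations $a_j=\sum_{i}z_{ij}\,e_i$ generate a free submodule $F\cong (R')^{e-1}$ and yield a generic Bourbaki sequence
$$0 \longrightarrow F \longrightarrow E' \longrightarrow I \longrightarrow 0,$$
with $I$ isomorphic to an $R'$-ideal of rank one. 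Because $E$ has projective dimension one, $I$ is a perfect $R'$-ideal of grade two; since $E$ needs $n$ generators, $I$ needs $n'=n-e+1\geq d+1$ of them, and its presentation matrix $\psi$ is obtained from $\varphi$ by constant-coefficient row operations (eliminating $e-1$ generators via the $z$-relations). Consequently $\psi$ still has linear entries with $I_1(\psi)=(x_1,\ldots,x_d)R'$, and its number of columns is again $n-e$.

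Next I would verify that the remaining hypotheses of \Cref{defideal} descend to $I$. The conditions $G_{d-1}$ and the failure of $G_d$ are Fitting-ideal height conditions that transfer between a module and its generic Bourbaki ideal, so $I$ satisfies $G_{d-1}$ but not $G_d$; similarly, because $\psi$ arises from $\varphi$ by constant-coefficient operations, $\psi$ has rank $1$ modulo $(x_1,\ldots,x_{d-1})$. Applying \Cref{defideal} to $I$ over $R'$ then gives that $\R(I)\cong R'[T_1,\ldots,T_{n'}]/\J_I$ is a Cohen-Macaulay domain with
$$\J_I=(\ell_1,\ldots,\ell_{n-e}):(x_1,\ldots,x_{d-1})=(\ell_1,\ldots,\ell_{n-e})+I_d(M),$$
where the $\ell_i$ and the matrix $M$ are built from $\psi$, equivalently from $\varphi$, since the two presentations share the same $n-e$ columns up to the elimination.

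I would then descend these conclusions to $\R(E)$. By the deformation statements in the generic Bourbaki theory, the images of $a_1,\ldots,a_{e-1}$ in $[\R(E')]_1$ form a regular sequence with $\R(E')/(a_1,\ldots,a_{e-1})\cong \R(I)$. In the presentation $\R(E')=R'[T_1,\ldots,T_n]/\J$ these images are the $e-1$ generic $T$-linear forms $A_j=\sum_i z_{ij}T_i$, and eliminating them identifies $R'[T_1,\ldots,T_n]/(A_1,\ldots,A_{e-1})$ with $R'[T_1,\ldots,T_{n'}]$ while carrying $\J$ onto $\J_I$. Since $\R(I)$ is a Cohen-Macaulay domain and $A_1,\ldots,A_{e-1}$ is a regular sequence, $\R(E')$ is a Cohen-Macaulay domain; as $\R(E)\hookrightarrow\R(E')=\R(E)\otimes_R R'$ is faithfully flat with regular fibers, both properties descend to show $\R(E)$ is a Cohen-Macaulay domain. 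For the defining ideal, I would check that the linear forms $\ell_1,\ldots,\ell_{n-e}$ defining the symmetric algebra of $E$ reduce modulo $(A_1,\ldots,A_{e-1})$ to those for $I$, and then lift the saturation formula: as the regular sequence $A_j$ involves no $x_i$, saturating by $(x_1,\ldots,x_{d-1})$ commutes with it, giving $\J=(\ell_1,\ldots,\ell_{n-e}):(x_1,\ldots,x_{d-1})$ over $R'$, a formula that descends to $R$ by faithful flatness since the $\ell_i$, the $x_i$, and the entries of $M$ are all defined over $R$.

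The main obstacle I anticipate is this last step: matching the generators of $\J$ precisely before and after eliminating the regular sequence, and confirming that the saturation $(\ell_1,\ldots,\ell_{n-e}):(x_1,\ldots,x_{d-1})$ is genuinely preserved by the Bourbaki deformation rather than merely surviving modulo $(A_1,\ldots,A_{e-1})$. Concretely, this requires showing that the non-linear generators $I_d(M)$ for $E$ specialize exactly to those for $I$ — so that $M$ is insensitive to the choice of generic Bourbaki element — and that no additional saturation appears when lifting from $I$ back to $E$. Demonstrating that colon ideals and the regular sequence interact cleanly, so that saturation and deformation commute, is where the genericity of $\underline{z}$ and the faithful flatness of $R\to R'$ must be used most carefully.
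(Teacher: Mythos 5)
Your overall strategy is the same as the paper's: construct a generic Bourbaki ideal $I$ of $E$, check that it inherits the hypotheses of \Cref{idealsetting} (this is the paper's \Cref{choosegbi}, using \Cref{existBourbaki} and \Cref{gbipresentation}), apply \Cref{defideal} to $I$, and then transfer the conclusion back to $E$ through the deformation $\R(E'')/(F'')\cong \R(I)$ of \Cref{MainBourbaki}. Up to and including the application of \Cref{defideal} to $I$, your outline matches the paper and is essentially correct.

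The genuine gap is exactly the step you flag at the end, and the justification you offer for it does not work. You claim that because the generic linear forms $A_j=\sum_i z_{ij}T_i$ involve no $x_i$, ``saturating by $(x_1,\ldots,x_{d-1})$ commutes with'' killing the regular sequence, so that the formula $\J_I=\L_I:(x_1,\ldots,x_{d-1})$ descends to $\J=\L:(x_1,\ldots,x_{d-1})$. Colon ideals do not in general commute with passing modulo a regular sequence: from $\J+(A_1,\ldots,A_{e-1})=\bigl(\L+(A_1,\ldots,A_{e-1})\bigr):(x_1,\ldots,x_{d-1})$ one only gets, for $f\in\J$, that $fx_i\in\L+(A_1,\ldots,A_{e-1})$, and there is no formal reason the $(A_j)$-component can be stripped off; disjointness of the variable sets is not enough. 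The paper sidesteps this precisely by \emph{not} deforming the colon ideal. Instead it deforms the explicit generating set: \Cref{keylemma} proves $\L+I_{d-1}(B')+(Y_1,\ldots,Y_{e-1})=\L_I+I_{d-1}(B_I')$, where the hard part --- your ``$I_d(M)$ specializes exactly'' issue --- is handled by \cite[4.4]{BM16} applied to the auxiliary matrices $B''$, $B''_I$ and the regular sequence $x_1,\ldots,x_{d-1},x_dT_n$. Then, in \Cref{defidealmodule}, the equality $\J R''[T_1,\ldots,T_n]=\L+I_{d-1}(B')$ is extracted from $\J R''[T_1,\ldots,T_n]+(Y_1,\ldots,Y_{e-1})=\L+I_{d-1}(B')+(Y_1,\ldots,Y_{e-1})$ by a modular-law computation, using the Cramer's-rule containment $\L+I_{d-1}(B')\subseteq\J$ (\Cref{modulecontainment}, which your proposal never establishes for $E$ itself), the fact that $Y_1,\ldots,Y_{e-1}$ is a regular sequence modulo $\J R''[T_1,\ldots,T_n]$ (\Cref{deformation}(c), giving $(Y)\cap\J R''[T]=(Y)\J R''[T]$), and finally graded Nakayama. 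These three ingredients --- the containment, the generator-level deformation lemma, and the Nakayama argument --- are what your ``commuting'' assertion would have to be replaced by; without them the reverse inclusion $\J\subseteq\L+I_d(M)$ is unproven.
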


We note that the $G_s$ condition for modules is defined similarly as for ideals; see \Cref{defGs}. The crucial ingredient within the proof of \Cref{intromoduletheorem} is the existence of a \textit{generic Bourbaki ideal} $I$ of $E$, which allows us to reduce the problem to the case of Rees algebras of ideals. Generic Bourbaki ideals (which we recall in detail in \Cref{prelimsection}) were introduced by Simis, Ulrich, and Vasconcelos in \cite{SUV03} to study the Cohen-Macaulay property of Rees algebras of modules. Under the assumptions of \Cref{intromoduletheorem}, we prove that a generic Bourbaki ideal $I$ of $E$ satisfies the assumptions of \Cref{introidealtheorem} (see \Cref{choosegbi}). Thanks to the Cohen-Macaulay property of $\R(I)$, we then show that the defining ideal of $\R(E)$ has the same shape as the defining ideal of $\R(I)$. This technique was first used in \cite[4.11]{SUV03} for linearly presented modules of projective dimension one satisfying $G_d$, in which case $\R(I)$ is Cohen-Macaulay as well. We refer the curious reader to \cite[5.6]{Costantini} and \cite[6.3]{Weaver1} for cases when this Cohen-Macaulay assumption is not satisfied, yet similarly one can determine the defining ideal of the Rees algebra of a module by reducing to the case of ideals.

 This paper is organized as follows. In \Cref{prelimsection}, we recall the necessary preliminary material on Rees algebras of ideals and modules, as well as the fundamental properties and construction of generic Bourbaki ideals. We also include a short overview of residual intersections of ideals, which are briefly required in the proof of \Cref{introidealtheorem} (see \Cref{cmhgt}). \Cref{idealsection} is dedicated to linearly presented height-two perfect ideals, with main result \Cref{defideal}. The essential ingredient in its proof is \Cref{minFitt}, where we show that the $(d-1)^{\text{st}}$ Fitting ideal of $I$ has a unique minimal prime, allowing the defining ideal of $\R(I)$ to be realized as a prime saturation in \Cref{Jasat}. We end the section by presenting some examples showing that the conclusion of \Cref{introidealtheorem} should not expected to hold if one modifies the assumptions. Lastly, in \Cref{modulesection} we study the Rees algebra of a linearly presented module $E$ of projective dimension one, with main result \Cref{defidealmodule}. The key ingredient to its proof is the existence of a generic Bourbaki ideal $I$ satisfying the main result of \Cref{idealsection}. From there,  we relate the Rees algebras $\R(E)$ and $\R(I)$, the defining ideal of the latter being known from \Cref{defideal}.

\section{Preliminaries} \label{prelimsection}
In this section, we collect some background notions which will be used throughout the paper. We primarily consider ideals and modules over a local ring $R$, but we note that each of the conventions presented in this setting apply to homogeneous ideals and modules over a standard-graded ring $S$ with $S_0$ a local ring, as well.

\subsection{Fitting Ideals}

We briefly recall the notion of the Fitting ideals of a finitely generated module $E$ over a Noetherian ring $R$. Suppose that $E$ has a free presentation 
$$R^m \overset{\varphi}{\longrightarrow} R^n \longrightarrow {E} \longrightarrow 0$$
and let $I_t(\varphi)$ denote the ideal of $t\times t$ minors of $\varphi$.
\begin{defn}
The $i^{\text{th}}$ \textit{Fitting ideal} of $E$ is the ideal $\fitt_i(E) = I_{n-i}(\varphi)$.
\end{defn}

These determinantal ideals are particularly useful in that they define certain invariants of a module $E$, in terms of a presentation.

\begin{prop}[{\cite[20.4--20.6]{Eisenbud}}]\label{fitt}
 With $\varphi$ a presentation of $E$ as above, we have the following.

 \begin{itemize}
     \item[{\rm(a)}] $\fitt_i(E)$ does not depend on the choice of presentation $\varphi$ and depends only on the module $E$ and the index $i$.
     \item[{\rm(b)}] If $R$ is local, then $\fitt_i(E) = R$ if and only if $\mu(E) \leq i$.
     \item[{\rm(c)}] $V(\fitt_i(E)) = \{\p \in \spec(R)\,|\, \mu(E_\p)\geq i+1\}$.
 \end{itemize}
\end{prop}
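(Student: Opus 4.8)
The plan is to establish the three statements in order, obtaining (b) and (c) as relatively quick consequences of (a) together with the fact that the formation of Fitting ideals commutes with localization. Throughout, I would use the two basic facts about determinantal ideals: $I_t(\cdot)$ is unchanged under left- or right-multiplication by an invertible matrix, and more generally $I_t(\varphi A) \subseteq I_t(\varphi)$ for any matrix $A$ of compatible size, which follows from the multilinear (Laplace) expansion of minors; I would also use the convention $I_t(\varphi) = R$ for $t \leq 0$.

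For part (a), I would first fix the surjection $\pi\colon R^n \to E$ and note that if $\varphi$ and $\varphi'$ are two matrices whose columns both generate $\ker\pi$, then each factors through the other, say $\varphi' = \varphi A$ and $\varphi = \varphi' A'$; hence $I_{n-i}(\varphi') \subseteq I_{n-i}(\varphi) \subseteq I_{n-i}(\varphi')$, giving equality. This settles the case of two presentations sharing the same free cover $R^n \to E$. The heart of (a) is comparing presentations with different numbers of generators, and here I would use a stabilization trick: given $R^m \overset{\varphi}{\longrightarrow} R^n \longrightarrow E \longrightarrow 0$, the matrix $\varphi' = \begin{pmatrix} \varphi & 0 \\ 0 & 1 \end{pmatrix}$ presents $E$ on $n+1$ generators, and a Laplace expansion along the last row $(0,\ldots,0,1)$ shows that $I_{(n+1)-i}(\varphi') = I_{n-i}(\varphi)$, so the index-$i$ Fitting ideal is preserved. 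Since any two finite free presentations of $E$ can be linked by such stabilizations together with the common-cover comparison above (a Schanuel-type argument comparing the two kernels after adding free summands), independence of the presentation follows. I expect the bookkeeping in this last step — verifying that any two presentations are connected by these elementary moves — to be the main obstacle, since everything else reduces to linear algebra over $R$.

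For part (b), I would choose a minimal presentation $R^m \overset{\varphi}{\longrightarrow} R^{\mu} \longrightarrow E \longrightarrow 0$ over the local ring $R$, where $\mu = \mu(E)$ and, by Nakayama and minimality, every entry of $\varphi$ lies in $\m$. If $i \geq \mu$, then $\fitt_i(E) = I_{\mu-i}(\varphi) = R$ by the convention above; if $i < \mu$, then $\mu - i \geq 1$ and $\fitt_i(E) = I_{\mu-i}(\varphi)$ is generated by minors of positive size of a matrix with entries in $\m$, hence is contained in $\m \neq R$. Thus $\fitt_i(E) = R$ if and only if $\mu(E) \leq i$.

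Finally, for part (c), I would invoke that the formation of Fitting ideals commutes with localization, so that $\fitt_i(E)_{\p} = \fitt_i(E_{\p})$, a presentation of $E$ localizing to a presentation of $E_{\p}$. Then $\p \in V(\fitt_i(E))$ if and only if $\fitt_i(E) \subseteq \p$, equivalently $\fitt_i(E_{\p}) = \fitt_i(E)_{\p}$ is a proper ideal of the local ring $R_{\p}$, i.e. $\fitt_i(E_{\p}) \neq R_{\p}$. By part (b) applied over $R_{\p}$, this is equivalent to $\mu(E_{\p}) \geq i+1$, which is exactly the desired description of $V(\fitt_i(E))$.
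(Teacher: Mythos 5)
This proposition is stated in the paper as quoted background, cited to Eisenbud [20.4--20.6], and the paper supplies no proof of its own; so the comparison here is against the standard textbook argument, and your proposal is essentially that argument. Parts (b) and (c) are complete and correct as written: existence of a minimal presentation over a Noetherian local ring, Nakayama forcing the entries of $\varphi$ into $\mathfrak{m}$, and the compatibility of Fitting ideals with localization are all used exactly as they should be.

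The one point you defer in (a) --- that any two presentations are linked by stabilizations and same-cover comparisons --- is indeed the crux, but it is finished by precisely the tools you already listed, so it is a deferral rather than a gap. Concretely: given two covers $\pi \colon R^n \to E$ and $\pi' \colon R^{n'} \to E$, lift $\pi$ through the surjection $\pi'$ to get $h \colon R^n \to R^{n'}$ with $\pi' h = \pi$. The unipotent automorphism $\Phi(x,y) = (x,\, y - h(x))$ of $R^n \oplus R^{n'}$ satisfies $(\pi,\pi') \circ \Phi = 0 \oplus \pi'$, and precomposing a cover with an automorphism of the source only changes a presentation matrix by left multiplication by an invertible matrix, which you noted preserves ideals of minors. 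Hence the Fitting ideals computed from the cover $(\pi,\pi')$ agree with those computed from $0 \oplus \pi'$, whose kernel is $R^n \oplus \ker \pi'$ and is therefore presented by the $n$-fold stabilization $\begin{pmatrix} 1_n & 0 \\ 0 & \varphi' \end{pmatrix}$; your Laplace-expansion computation then identifies these with the Fitting ideals of $\varphi'$. By symmetry the same holds for $\varphi$, completing (a). With that paragraph inserted, your proof is complete and matches the standard route of the cited source.
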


Recall that $E$ is said to have a \textit{rank} if $E \otimes_R \mathrm{Quot}(R) \cong \mathrm{Quot}(R)^e$ for some integer $e$, where $\mathrm{Quot}(R)$ denotes the total ring of quotients of $R$. Here we say $\rank E=e$.

\begin{defn}\label{defGs}
A module $E$ with $\rank E =e$ is said to satisfy $G_s$ if $\mu(E_{\p}) \leq \dim R_{\p} +e -1$ for every $\p \in \spec(R)$ with $1\leq \dim R_{\p} \leq s-1$. If $E$ satisfies $G_s$ for all $s$, $E$ is said to satisfy $G_\infty$.
\end{defn}
In particular, when $e=1$, this recovers the definition given for ideals in the introduction. Additionally notice that from \Cref{fitt}, we equivalently have that $E$ satisfies $G_s$ if $\hgt \fitt_i(E) \geq i-e+2$ for all $e\leq i\leq s+e-2$.

We next present a lemma which will be used frequently throughout this paper in the case $s=d-1$. We state it more generally for every integer $s$, in light of potential future applications; see \Cref{Gssubsection}.

\begin{lemma}\label{hgtlfittlemma}
Let $R$ be a Noetherian ring and $E$ a finitely generated $R$-module with rank $e$ satisfying $G_s$ but not $G_{s+1}$. Then $\hgt \fitt_{s+e-2}(E)= \hgt \fitt_{s+e-1}(E) =s$.
\end{lemma}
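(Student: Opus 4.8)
The plan is to trap both heights between $s$ and $s$ using three ingredients: the monotonicity of Fitting ideals, a lower bound extracted from $G_s$, and an upper bound extracted from the failure of $G_{s+1}$.

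First I would record that Fitting ideals form an ascending chain $\fitt_i(E) \subseteq \fitt_{i+1}(E)$, which is immediate from $\fitt_i(E) = I_{n-i}(\varphi)$ and the containment $I_{n-i+1}(\varphi) \subseteq I_{n-i}(\varphi)$ coming from Laplace expansion of minors. Heights are therefore non-decreasing in $i$, so in particular $\hgt \fitt_{s+e-2}(E) \leq \hgt \fitt_{s+e-1}(E)$. It thus suffices to prove $\hgt \fitt_{s+e-2}(E) \geq s$ and $\hgt \fitt_{s+e-1}(E) \leq s$. The lower bound is immediate from the Fitting-ideal reformulation of $G_s$ recalled before the statement: since $E$ satisfies $G_s$, we have $\hgt \fitt_i(E) \geq i - e + 2$ for all $e \leq i \leq s+e-2$, and evaluating at the top index $i = s+e-2$ gives $\hgt \fitt_{s+e-2}(E) \geq s$.

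The upper bound is the crux. Because $E$ fails $G_{s+1}$, \Cref{defGs} produces a prime $\p$ with $1 \leq \dim R_\p \leq s$ and $\mu(E_\p) > \dim R_\p + e - 1$. Here I would use the standing hypothesis that $E$ does satisfy $G_s$: it forbids any such witness with $\dim R_\p \leq s-1$, so necessarily $\dim R_\p = s$ and hence $\mu(E_\p) \geq s + e$. By \Cref{fitt}(c), the inequality $\mu(E_\p) \geq (s+e-1)+1$ is exactly the statement $\p \in V(\fitt_{s+e-1}(E))$, whence $\hgt \fitt_{s+e-1}(E) \leq \hgt \p = \dim R_\p = s$. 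Combining the three inequalities yields $s \leq \hgt \fitt_{s+e-2}(E) \leq \hgt \fitt_{s+e-1}(E) \leq s$, forcing equality throughout.

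The one genuinely delicate point---and the step I would treat most carefully---is pinning down the dimension of the witness prime in the upper bound: the $G_s$ hypothesis is precisely what concentrates the failure of $G_{s+1}$ at primes of dimension exactly $s$, and this is what converts the module-theoretic failure into the sharp height bound $\hgt \fitt_{s+e-1}(E) \leq s$. Everything else is bookkeeping with the nested chain of Fitting ideals and the dictionary between the local number of generators and Fitting-ideal support provided by \Cref{fitt}.
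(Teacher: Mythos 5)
Your proof is correct and follows essentially the same route as the paper's: the lower bound $\hgt \fitt_{s+e-2}(E) \geq s$ from $G_s$, the upper bound $\hgt \fitt_{s+e-1}(E) \leq s$ from the failure of $G_{s+1}$, and the containment $\fitt_{s+e-2}(E) \subseteq \fitt_{s+e-1}(E)$ to squeeze the two heights together. The paper's proof is simply terser, leaving implicit the witness-prime argument (that the failing prime must have $\dim R_\p = s$) which you spell out.
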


\begin{proof}
As $E$ satisfies $G_s$, it follows that $\hgt \fitt_{s+e-2}(E) \geq s$. Moreover, since $E$ does not satisfy $G_{s+1}$, we have that $\hgt \fitt_{s+e-1}(E) \leq s$. Now since $\fitt_{s+e-2}(E) \subseteq \fitt_{s+e-1}(E)$, the claim follows.
\end{proof}

\subsection{Residual intersections}

Originating within the study of intersections of algebraic varieties, residual intersections of an ideal $I$ have become a useful tool to understand the powers of $I$ \cite{U94}. To keep this article self-contained, we recall only the basic definitions and a few results from the literature which we will use in the proof of \Cref{cmhgt}. 

\begin{defn}[{\cite[1.1]{HU88}}] \label{defResInt}
Let $R$ be a Cohen-Macaulay local ring, $I$ an $R$-ideal, and $s$ an integer with $s\geq \hgt I$. A proper ideal $J=\a:I$, where $\a = (a_1,\ldots,a_s) \subseteq I$, is said to be an \textit{$s$-residual intersection} of $I$ if $\hgt J\geq s$.
\end{defn}

Residual intersections of \textit{complete intersection ideals} are a very interesting case and have been well-studied \cite{BKM90}. In particular, through their study in the generic case, any residual intersection of a complete intersection ideal can be realized as the specialization of a generic residual intersection \cite[1.5]{HU90}. We refer the reader to \cite{{HU88},{HU90}} for a thorough treatment on generic residual intersections, which would be beyond the scope of this paper. For our purposes, we will only need the following result.

\begin{thm}\label{thmResInt}
Let $R$ be a Cohen-Macaulay local ring and $I=(x_1,\ldots,x_g)$ an $R$-ideal with $x_1,\ldots,x_g$ an $R$-regular sequence. Let $\a=(a_1,\ldots,a_s) \subseteq I$ such that $J=\a:I$ is an $s$-residual intersection of $I$.
\begin{itemize}
    \item[{\rm(a)}] {\rm(}\cite[1.4 and 1.5]{HU90}{\rm)} $R/J$ is a Cohen-Macaulay ring.    
    \item[{\rm(b)}] {\rm(}\cite[1.5 and 1.8]{HU90}{\rm)} If $B$ is a $g\times s$ matrix with $[a_1 \ldots a_s] = [x_1\ldots x_g]\cdot B$, then $J =\a + I_g(B)$.
\end{itemize}
\end{thm}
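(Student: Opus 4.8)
The plan is to establish the explicit description in (b) first and then to read off the Cohen-Macaulay property in (a) from a free resolution. Note throughout that $s \geq g = \hgt I$, so $B$ is a $g \times s$ matrix with $g \leq s$, and $I_g(B)$ is its ideal of maximal minors.

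One inclusion of (b) is purely formal. Since $\a$ is an ideal, $\a \subseteq \a : I = J$. For the minors, fix columns $j_1 < \cdots < j_g$ and let $B'$ be the corresponding $g \times g$ submatrix, so that $[a_{j_1} \ldots a_{j_g}] = [x_1 \ldots x_g] \cdot B'$; multiplying on the right by the adjugate of $B'$ gives $\det(B')\,x_i \in (a_{j_1}, \ldots, a_{j_g}) \subseteq \a$ for every $i$, whence $\det(B') \in \a : I = J$. Thus $\a + I_g(B) \subseteq J$, and the whole content of the theorem lies in the reverse inclusion $J \subseteq \a + I_g(B)$ together with the Cohen-Macaulay property.

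I would obtain both by reducing to the generic residual intersection and specializing. Adjoining indeterminates $Z_{ij}$ to $R$ and localizing at the expanded maximal ideal yields a Cohen-Macaulay local ring $\widetilde{R}$; setting $\alpha_j = \sum_i x_i Z_{ij}$, $\widetilde{\a} = (\alpha_1, \ldots, \alpha_s)$ and $\widetilde{J} = \widetilde{\a} : I\widetilde{R}$ produces the generic $s$-residual intersection of the complete intersection $I$, of which the ideal $J$ in the statement is the specialization under $Z_{ij} \mapsto B_{ij}$. The crux is then the generic identity $\widetilde{J} = \widetilde{\a} + I_g(Z)$ with $\widetilde{R}/\widetilde{J}$ Cohen-Macaulay of codimension $s$. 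For this I would invoke the explicit finite free resolution of the generic residual intersection of a complete intersection (cf. \cite{BKM90}): constructed over a regular base and of length equal to the codimension $s$, it shows that $\widetilde{R}/\widetilde{J}$ is perfect of grade $s$---hence Cohen-Macaulay---and its first syzygies pin down $\widetilde{J}$ as exactly $\widetilde{\a} + I_g(Z)$. Faithfully flat base change returns these conclusions to the generic situation over the original $R$.

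Finally I would transport both statements to $J$ via the specialization $Z_{ij} \mapsto B_{ij}$. By generic perfection, the specialized resolution stays acyclic and resolves $R/(\a + I_g(B))$ as soon as the grade does not drop, so $R/(\a + I_g(B))$ is perfect---hence Cohen-Macaulay---of codimension $s$; an unmixedness comparison with $J \supseteq \a + I_g(B)$, using $\hgt J \geq s$, then forces $J = \a + I_g(B)$ and yields (a) and (b) simultaneously. I expect the main obstacle to be exactly this passage: the construction of the generic resolution and the verification that its length equals the codimension, together with the bookkeeping showing that perfection and the explicit form survive specialization precisely under the hypothesis $\hgt J \geq s$. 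The easy inclusion above is formal, and once the generic case is settled the specialization is the only remaining subtlety.
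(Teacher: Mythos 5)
This statement is one the paper does not prove at all: it is quoted as background from Huneke--Ulrich, with the bracketed references in the statement (\cite[1.4, 1.5, 1.8]{HU90}) standing in for a proof, and the surrounding text explicitly defers the theory of generic residual intersections to \cite{HU88}, \cite{HU90} and \cite{BKM90}. So your proposal can only be measured against those sources, whose overall strategy --- pass to the generic residual intersection, prove the statement there, then specialize --- you have correctly reproduced. Your adjugate argument for $\a + I_g(B) \subseteq J$ is complete and correct.

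The two steps you yourself flag as ``the main obstacle'' are, however, genuine gaps and not bookkeeping. First, generic perfection lets you specialize the Bruns--Kustin--Miller complex only if $\grade\,(\a + I_g(B)) \geq s$ \emph{after} the substitution $Z_{ij} \mapsto B_{ij}$. The hypothesis of the theorem bounds the height of the colon ideal $J = \a : I$, which \emph{contains} $\a + I_g(B)$; this gives $\hgt\, (\a + I_g(B)) \leq \hgt J$, i.e.\ the wrong direction. The needed bound is true, but it requires its own argument, and this is exactly where the residual-intersection hypothesis enters: if $\q \supseteq \a + I_g(B)$ with $\hgt \q < s$, then either $I \not\subseteq \q$, in which case choosing $w \in I \setminus \q$ and using $Jw \subseteq \a \subseteq \q$ gives $J \subseteq \q$, contradicting $\hgt J \geq s$; or $I \subseteq \q$, in which case $(\a : I)_\q = R_\q$ forces $\a_\q = I_\q$, and since $x_1, \ldots, x_g$ minimally generate $I_\q$, Nakayama forces $B$ to have rank $g$ modulo $\q$, so $I_g(B) \not\subseteq \q$, again a contradiction. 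Without this lemma the specialization step cannot even start.

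Second, even once $R/(\a + I_g(B))$ is known to be Cohen-Macaulay of codimension $s$, your closing ``unmixedness comparison'' does not force $J = \a + I_g(B)$. The localization argument works only at associated primes $\p$ of $\a + I_g(B)$ with $I \not\subseteq \p$ (there $J_\p = \a_\p$). Associated primes containing $I$ do occur whenever the residual intersection is not geometric --- already for $\a = (x^2) \subseteq I = (x)$ in $k[[x,y]]$, where $J = \a + I_1(B) = (x)$ --- and at such primes unmixedness alone says nothing. The generic case escapes this because $\hgt\,(I\widetilde{R} + I_g(Z)) \geq s+1$, but that height can drop to $s$ under specialization; closing this case is precisely the content of \cite[1.5]{HU90} (the colon ideal itself commutes with specialization), which needs a further depth-theoretic or length-comparison argument beyond acyclicity of the specialized complex. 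Relatedly, your remark that the resolution's ``first syzygies pin down $\widetilde{J}$'' is not a proof even generically: a free resolution of $\widetilde{\a} + I_g(Z)$ cannot by itself identify a colon ideal; there, too, one needs the localization argument, which succeeds generically exactly because of the height bound just mentioned.
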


\subsection{Blowup algebras}

As both Rees algebras of ideals and modules are considered in this article, we proceed as generally as possible in this subsection and choose our notation so as to treat both cases simultaneously. Assume that $R$ is a Noetherian ring and let $E = Ra_1+\cdots +Ra_n$ be a finitely generated $R$-module with rank $e>0$. As torsion-free modules of rank one are isomorphic to ideals of positive grade, the definitions given below pertain to ideals as well, in the case that $e=1$. 

There is a natural homogeneous epimorphism of graded $R$-algebras $$\eta \colon R[T_1,\ldots,T_n] \twoheadrightarrow \S(E)$$ given by $T_i\mapsto a_i\in [\S(E)]_1$, where $\S(E)$ is the \textit{symmetric algebra} of $E$. The kernel $\L$ of this map is easily described from a presentation of $E$. Indeed, if $R^m\overset{\varphi}{\rightarrow}R^n \rightarrow E\rightarrow 0$ is any presentation of $E$, then $\L$ is generated by linear forms $\ell_1,\ldots,\ell_m$ such that $[T_1\ldots T_n]\cdot \varphi = [\ell_1 \ldots \ell_m]$. Naturally, there is an induced isomorphism $\S(E) \cong R[T_1,\ldots,T_n]/\L$.

\begin{defn}
The \textit{Rees algebra} of $E$ is defined as $\S(E)/\tau(\S(E))$, where $\tau(\S(E))$ is the $R$-torsion submodule of $\S(E)$.    
\end{defn}

We note that if $E$ is isomorphic to an ideal, say $E\cong I$, then the Rees algebra $\mathcal{R}(E)$ defined above is isomorphic to the subalgebra $\R(I) = R[It] \subseteq R[t]$, as defined in the introduction.

Composing the map $\eta$ above with the natural map factoring the $R$-torsion of $\S(E)$, one obtains a homogeneous epimorphism $$\pi \colon R[T_1,\ldots,T_n] \twoheadrightarrow \R(E)$$ given by $T_i\mapsto a_i\in [\R(E)]_1$. The kernel $\J = \mathrm{ker}(\pi)$ is the \textit{defining ideal} of $\R(E)$ and there is an induced isomorphism $\R(E) \cong R[T_1,\ldots,T_n]/\J$. By construction, it is clear that $\L\subseteq \J$.

\begin{defn}
If $\L = \J$, or equivalently $\S(E) \cong \R(E)$, then $E$ is said to be of \textit{linear type}. 
\end{defn}

The terminology is fitting as $\L$ is generated in degree one and moreover, $\L = [\J]_1$. If $E$ is not of linear type, a typical source of higher-degree generators of $\J$ is the \textit{Jacobian dual matrix}.

\begin{defn}
Let $R^m\overset{\varphi}{\rightarrow}R^n \rightarrow E\rightarrow 0$ be a presentation of $E$ and $\ell_1,\ldots,\ell_m$ the generators of $\L$ as before. There exists an $r\times m$ matrix $B(\varphi)$ consisting of linear entries in $R[T_1,\ldots,T_n]$ with
$$[T_1 \ldots T_n] \cdot \varphi=[\ell_1 \ldots \ell_m] = [x_1\ldots x_r]\cdot B(\varphi) $$
where $(x_1,\ldots,x_r)$ is an ideal containing $I_1(\varphi)$. We say that $B(\varphi)$ is a \textit{Jacobian dual} matrix of $\varphi$ with respect to the sequence $x_1,\ldots,x_r$.    
\end{defn}

 It is important to note that $B(\varphi)$ is not unique in general. However, if $R=k[x_1,\ldots,x_d]$ and $I_1(\varphi) \subseteq (x_1,\ldots,x_d)$, there is a unique Jacobian dual matrix $B(\varphi)$ with respect to $x_1,\ldots,x_d$ if and only if the entries of $\varphi$ are linear; see \cite[p.~47]{SUV93}. In this case, the entries of $B(\varphi)$ belong to the subring $k[T_1,\ldots,T_n]$, a fact that will be crucial in several arguments presented in this paper.

We next recall the definition of another graded algebra, which is closely related to $\R(E)$.

\begin{defn}
Further assume that $R$ is a local ring with maximal ideal $\m$ and residue field $k$. The \textit{special fiber ring} of $E$ is 
$$\F(E) := \R(E)\otimes_R k \cong \R(E)/\m \R(E).$$ 
The Krull dimension of the special fiber ring is called the \textit{analytic spread} of $E$ and is denoted by $\ell(E)$.    
\end{defn}

\subsection{Generic Bourbaki ideals} 
Introduced by Simis, Ulrich, and Vasconcelos in \cite{SUV03}, generic Bourbaki ideals have proven to be a powerful tool within the study of Rees algebras of modules. Indeed, their implementation often allows one to reduce problems regarding Rees algebras of modules to the case of ideals, where information is much more readily available. We recall their construction below, as well as some of their useful properties. 

 \begin{notat}\label[notat]{NotationBourbaki} 
   Let $R$ be a Noetherian ring, $E=Ra_1 + \dots + Ra_n$ a finitely generated $R$-module with $\rank_{\,}E=e>0$. Let $\displaystyle Z= \{ Z_{ij} \, | \, 1 \leq i \leq n$, $1\leq j \leq e-1 \}$ be a set of indeterminates, and denote $$ R'\coloneq R[Z], \quad E'\coloneq E \otimes_R R', \quad y_j \coloneq \sum_{i=1}^n Z_{ij} a_i \in E',\quad \mathrm{and} \quad F'\coloneq \sum_{j=1}^{e-1} R' y_j. $$ If $R$ is local with maximal ideal $\m$, let $\,\displaystyle R'' \coloneq R(Z)= R[Z]_{\m R[Z]}$ and similarly denote $\,\displaystyle E''\coloneq E \otimes_R R''\,$ and $\, \displaystyle F''\coloneq F' \otimes_{R'} R''$.
 \end{notat}

 \begin{thm}[{\rm \cite[3.1--3.4]{SUV03}}]  \label{existBourbaki} 
   Let $R$ be a Noetherian local ring, $E$ a finitely generated $R$-module with $\rank_{\,}E=e>0$. Assume that $E$ is torsion-free and that $E_{\p}$ is free for all $\p \in \spec(R)$ with $\depth R_\p \leq 1$ (e.g., if $E$ satisfies $G_2$). 
   \begin{itemize}
      \item[{\rm(a)}] For $R''$, $E''$, and $F''$ as in \cref{NotationBourbaki}, $\,F''$ is a free $R''$-module of rank $e-1$ and $E''/F''$ is isomorphic to an $R''$-ideal $I$ with $\grade I >0$. Moreover, $I$ can be chosen to have grade at least $\,2\,$ if and only if $E$ is orientable.
      \item[{\rm(b)}] The ideal $I$ satisfies $G_s$ if and only if $E$ satisfies $G_s$.     
      \item[{\rm(c)}] If $\grade I \geq 3$, then $\,E \cong R^{e-1} \oplus L\,$ for some $R$-ideal $L$. In this case, $LR'' \cong I$.
    \end{itemize}
 Such an ideal $I$ is called a \textit{generic Bourbaki ideal} of $E$. Moreover, if $K$ is another ideal constructed this way using variables $Y$, then the images of $I$ and $K$ in $S=R(Z,Y)$ coincide up to multiplication by a unit in $\mathrm{Quot}(S)$, and are equal whenever $I$ and $K$ have grade at least two.
 \end{thm}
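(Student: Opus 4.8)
The plan is to realize $I$ as the quotient in a \emph{generic Bourbaki sequence}
\[
0 \longrightarrow F'' \longrightarrow E'' \longrightarrow E''/F'' \longrightarrow 0,
\]
and to prove that $E''/F''$ is torsion-free of rank one, hence isomorphic to an $R''$-ideal with the asserted properties. The reason for passing from $R$ to $R''=R[Z]_{\m R[Z]}$ and from $E$ to $E''$ is that the generic coefficients $Z_{ij}$ turn the $e-1$ elements $y_j=\sum_i Z_{ij}a_i$ into a \emph{generic partial basis}: over a field they would be $e-1$ generic vectors in an $e$-dimensional space and hence independent. Since $R''$ is again local and faithfully flat over $R$, the properties we need can be verified prime-by-prime and transferred between $R$ and $R''$.

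First I would verify the freeness of $F''$ and the rank-one torsion-freeness of the quotient. Counting ranks, the $y_j$ are $e-1$ generic elements of a rank-$e$ module, so over $\mathrm{Quot}(R'')$ they are linearly independent and $\rank F'' = e-1$, $\rank(E''/F'') = 1$. For freeness, any relation $\sum_j c_j y_j = 0$ with $c_j \in R''$ pulls back to the syzygy $\big(\sum_j c_j Z_{ij}\big)_i$ of the generators $a_i$; since the $Z_{ij}$ are indeterminates, comparing coefficients forces every $c_j = 0$, so $F''$ is $R''$-free of rank $e-1$. Torsion-freeness of $E''/F''$ need only be checked after localizing at the primes $\p$ with $\depth R''_\p \le 1$, which is exactly where the torsion of a quotient of the torsion-free module $E''$ can be detected; at such primes $E''_\p$ is free by the standing hypothesis on $E$, the generic $y_j$ extend to a basis, and the localized quotient is free, hence torsion-free. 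A torsion-free module of rank one embeds into $\mathrm{Quot}(R'')$ and, after clearing denominators, is isomorphic to an ideal $I$ with $\grade I > 0$.

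Next I would treat the grade dichotomy together with parts (b) and (c). Taking top exterior powers of the Bourbaki sequence and using $\bigwedge^{e-1}F''\cong R''$ identifies $\bigwedge^e E''$ with $I$ up to the trivial determinant of the free module $F''$; consequently the grade of $I$ can be arranged to be at least $2$ exactly when $(\bigwedge^e E)^{**}$ is free, which is the defining condition for $E$ to be orientable. For (b), localizing the sequence at a prime $\p$ and using the freeness of $F''$ relates $\mu(I_\p)$ to $\mu(E''_\p)$ by the shift $\mu(I_\p)=\mu(E''_\p)-(e-1)$, which is precisely the $+e-1$ discrepancy between the definitions of $G_s$ for modules and for ideals, so faithful flatness of $R\to R''$ transfers $G_s$ in both directions. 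For (c), when $\grade I\ge 3$ the extension class of the sequence lies in
\[
\operatorname{Ext}^1_{R''}(I,F'')\cong\operatorname{Ext}^1_{R''}(I,R'')^{\,e-1}\cong\operatorname{Ext}^2_{R''}(R''/I,R'')^{\,e-1},
\]
which vanishes since $\grade I>2$; hence the sequence splits, $E''\cong (R'')^{e-1}\oplus I$, and descending this decomposition along the faithfully flat map $R\to R''$ yields $E\cong R^{e-1}\oplus L$ with $LR''\cong I$.

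The main obstacle is the genericity bookkeeping: making \emph{generic partial basis} and \emph{generic syzygy} precise requires controlling associated primes and local freeness uniformly as the $Z_{ij}$ are adjoined and localized, and the descent in (c) must be justified by faithfully flat descent of a direct-sum decomposition. Finally, for the uniqueness assertion — that ideals $I$ and $K$ built from variable sets $Z$ and $Y$ agree up to a unit of $\mathrm{Quot}(S)$ with $S=R(Z,Y)$ — I would work inside $S$ and exploit the symmetry exchanging the two generic coefficient systems: both constructions cut out the same rank-one quotient of $E\otimes_R\mathrm{Quot}(S)$, which forces $I$ and $K$ to be proportional, the proportionality factor being a unit once both ideals have grade at least $2$.
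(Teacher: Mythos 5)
First, a point of reference: the paper does not prove this theorem at all --- it quotes it as background from \cite[3.1--3.4]{SUV03} --- so your proposal has to be measured against the original argument of Simis, Ulrich and Vasconcelos, whose architecture (Bourbaki sequence, freeness of $F''$, torsion-freeness of $E''/F''$, exterior powers for orientability, $\mathrm{Ext}$-vanishing for (c)) your outline does mirror. The central gap is the step you set aside as ``genericity bookkeeping'': it is not bookkeeping, it is the proof. Torsion-freeness of $E''/F''$ needs the images of $y_1,\ldots,y_{e-1}$ to be part of a basis of $E''_{\p}$ at \emph{every} prime $\p$ with $\depth R''_{\p}\leq 1$, and part (b) needs $\mu(I_{\q})=\mu(E''_{\q})-(e-1)$ at every prime $\q$ in the $G_s$ range (freeness of $F''$ alone gives only $\mu(I_{\q})\geq\mu(E''_{\q})-(e-1)$). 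Since infinitely many primes are involved, one cannot choose the $Z_{ij}$ prime by prime; the uniform statement is exactly what basic-element theory (Eisenbud--Evans), on which \cite{SUV03} rests, supplies, and no substitute for it appears in your sketch. Two further local defects: the ``comparing coefficients'' argument for freeness of $F''$ is invalid as written, since the $c_j$ lie in $R''$ and hence involve the $Z_{ij}$, and the $a_i$ do satisfy syzygies (your preceding observation that the $y_j$ are $\mathrm{Quot}(R'')$-linearly independent already yields freeness, so this step should simply be deleted); and the reduction of torsion-freeness to primes of depth at most one is \emph{not} a general property of quotients of torsion-free modules (it fails for $R\twoheadrightarrow R/\m$), but holds here only via the depth lemma applied to $0\to F''\to E''\to E''/F''\to 0$, using that $F''$ is free and that $\depth E''_{\p}\geq 1$ whenever $\depth R''_{\p}\geq 1$.

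In part (c), the mechanism you invoke would fail: there is no ``faithfully flat descent of direct-sum decompositions'' (projective modules become free after faithfully flat base change without being free; summands do not descend). Your splitting of the Bourbaki sequence via $\mathrm{Ext}^1_{R''}(I,R'')\cong \mathrm{Ext}^2_{R''}(R''/I,R'')=0$ is correct, but passing from $E''\cong (R'')^{e-1}\oplus I$ to $E\cong R^{e-1}\oplus L$ requires the special structure of $R\to R''=R(Z)$ --- specialization of the indeterminates, or again basic elements, as in \cite{SUV03} --- not a general descent principle. Likewise, in the uniqueness assertion it is not true that the two constructions ``cut out the same rank-one quotient of $E\otimes_R \mathrm{Quot}(S)$'': the subspaces spanned by the $Z$-combinations and by the $Y$-combinations are different, so the quotients are genuinely different; what must actually be shown is that the two quotients are abstractly isomorphic as $S$-modules, after which the conclusion follows because an isomorphism of ideals of positive grade extends to multiplication by a unit of $\mathrm{Quot}(S)$, and because $(S:_{\mathrm{Quot}(S)}I)=S$ once $\grade I\geq 2$.
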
   

 Recall that a module $E$ of rank $e >0$ is said to be \textit{orientable} if $\, (\bigwedge^{e} E)^{\ast \ast} \cong R, \,$ where $\bigwedge^{e} E$ is the $e^{\text{th}}$ exterior power of $E$ and $(-)^{\ast}$ denotes the functor $\mathrm{Hom}_R (-, R)$. 
 This condition is not restrictive in our setting, as it is automatically satisfied for any module of finite projective dimension. Moreover, every finitely generated module over a unique factorization domain is orientable.

 The effectiveness of generic Bourbaki ideals in the study of Rees algebras of modules is illustrated by the following fundamental result, which we will refer to often in \Cref{modulesection}.

 \begin{thm}[{\rm \cite[3.5 and 3.8]{SUV03}}]\label{MainBourbaki} 
   In the setting of \cref{NotationBourbaki} and with the assumptions of \cref{existBourbaki}, the following statements are true.
     \begin{itemize}
       \item[{\rm(a)}] $\R(E)$ is Cohen-Macaulay if and only if $\,\R(I)$ is Cohen-Macaulay.
       \item[{\rm(b)}] If $\,\grade \, \R(E)_+ \geq e$, then $\,\R(I) \cong \R(E'')/(F'')$. 
       \item[{\rm(c)}] $E$ is of linear type with $\,\grade \, \R(E)_+ \geq e\,$ if and only if $I$ is of linear type.
    \end{itemize}
   Moreover, if any of the conditions above are satisfied, then $\,\R(I) \cong \R(E'')/(F'')$ and $y_1, \ldots, y_{e-1}$ form a regular sequence on $\R(E'')$. 
\end{thm}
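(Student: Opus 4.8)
The plan is to prove the final (moreover) statement first and then read off the three equivalences. Set $S = \R(E'')$, a standard-graded $R''$-algebra, and write $\underline{y} = y_1, \ldots, y_{e-1}$ for the generic elements, viewed as degree-one elements of $S$ via $F'' \subseteq E'' \subseteq [\R(E'')]_1$. Since $R \to R'' = R(Z)$ is faithfully flat with regular (localized polynomial) fibers, one has $\R(E'') \cong \R(E) \otimes_R R''$, so the Cohen-Macaulay property, the analytic data, and $\grade \R(E)_+$ are all preserved under this base change; it is therefore harmless to carry out the construction over $R''$ and descend at the end.

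First I would build the natural comparison map. By \cref{existBourbaki} the Bourbaki quotient gives $p \colon E'' \twoheadrightarrow E''/F'' \cong I$, which induces a surjection of symmetric algebras $\S(E'') \twoheadrightarrow \S(I)$ whose kernel is generated in degree one by $F''$, i.e. by $\underline{y}$; hence $\S(E'')/(\underline{y}) \cong \S(I)$. Composing with $\S(I) \twoheadrightarrow \R(I)$ and using that $\R(I) \subseteq R''[t]$ is $R''$-torsion-free, the resulting $R''$-linear surjection $\S(E'') \twoheadrightarrow \R(I)$ annihilates the torsion submodule $\tau(\S(E''))$ defining $\R(E'')$. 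This produces a graded surjection $\bar\psi \colon \R(E'')/(\underline{y})\R(E'') \twoheadrightarrow \R(I)$, which is the map I must show is an isomorphism.

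Next I would show, under the hypothesis $\grade \R(E)_+ \geq e$, that $\underline{y}$ is a regular sequence on $S$ and that $S/(\underline{y})S$ is $R''$-torsion-free; granting these, $\bar\psi$ is forced to be injective. Indeed $S/(\underline{y})S$ is a quotient of $\S(E'')/(\underline{y}) \cong \S(I)$, and the composite $\S(I) \twoheadrightarrow S/(\underline{y})S \overset{\bar\psi}{\twoheadrightarrow} \R(I) = \S(I)/\tau(\S(I))$ is the torsion quotient, so $\ker\bar\psi$ is a torsion module and torsion-freeness of $S/(\underline{y})S$ yields $\ker\bar\psi = 0$. This is the technical heart, and it is exactly here that the genericity of the $Z_{ij}$ is indispensable: each $y_j = \sum_i Z_{ij}a_i$ is a generic $R''$-linear combination of the generators, a generic degree-one element of $S$, and the role of adjoining fresh indeterminates and localizing is to force the $y_j$ to avoid every associated prime of the successive quotients that does not contain the irrelevant ideal $S_+$. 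The hypothesis $\grade S_+ = \grade \R(E)_+ \geq e$ is precisely what provides room for $e-1$ such generic forms to be regular and to leave a torsion-free quotient. A dimension count $\dim S/(\underline{y})S = (\dim R + e) - (e-1) = \dim R + 1 = \dim \R(I)$ confirms consistency, and this establishes the moreover statement and part (b).

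Finally the equivalences follow formally. For (a), if $\R(E)$ is Cohen-Macaulay then $\R(E)_+$ has height $e$ and grade equals height in a Cohen-Macaulay ring, so the grade hypothesis holds; killing the regular sequence $\underline{y}$ shows $\R(I) \cong \R(E'')/(\underline{y})$ is Cohen-Macaulay, and the converse follows by lifting the regular sequence and descending Cohen-Macaulayness along the faithfully flat map $R \to R''$. For (c), if $E$ is of linear type with the grade hypothesis then $\S(E'') = \R(E'')$, whence $\S(I) \cong \S(E'')/(\underline{y}) \cong \R(E'')/(\underline{y}) \cong \R(I)$ shows $I$ is of linear type; conversely, using that $\underline{y}$ is regular on $\S(E'')$ together with $\S(I) = \R(I)$, one lifts the equality through the regular sequence to recover $E$ of linear type with $\grade \R(E)_+ \geq e$. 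The main obstacle throughout is the regular-sequence-plus-torsion-freeness step of the previous paragraph: everything hinges on choosing the $Z_{ij}$ generic enough that the $y_j$ avoid the relevant associated primes, and the grade condition on $\R(E)_+$ is exactly what makes that possible.
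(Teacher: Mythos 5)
A preliminary remark: this theorem is quoted background. The paper contains no proof of it --- it is cited verbatim from \cite[3.5 and 3.8]{SUV03} --- so your attempt must be measured against the original argument of Simis--Ulrich--Vasconcelos. In outline your skeleton does match theirs: they too realize $\R(I)$ as the target of the natural surjection $\R(E'')/(y_1,\ldots,y_{e-1}) \twoheadrightarrow \R(I)$ coming from $\S(E'')/(F'')\cong \S(I)$, and the entire theorem reduces to showing that $y_1,\ldots,y_{e-1}$ form a regular sequence on $\R(E'')$ with $R''$-torsion-free quotient. Your formal steps are correct: the existence and surjectivity of $\bar\psi$, the observation that $\ker\bar\psi$ is a torsion module, the forward implications in (a) and (c) granting the deformation, and the height/grade computation showing that Cohen--Macaulayness of $\R(E)$ implies $\grade \R(E)_+ \geq e$.

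However, there are two genuine gaps. First, your self-identified ``technical heart'' is asserted rather than proven, and the heuristic offered does not cover it. Generic avoidance of associated primes does yield \emph{regularity}: since $y_1,\ldots,y_i$ lie in $\R(E'')_+$, one has $\grade(\R(E'')_+,\R(E'')/(y_1,\ldots,y_i)) \geq e-i > 0$, so no (necessarily graded) associated prime of a successive quotient contains the degree-one part, and a generic element avoids the finitely many such primes. But avoidance gives no control over the associated primes of the \emph{final} quotient, and $R''$-torsion-freeness of $\R(E'')/(y_1,\ldots,y_{e-1})$ is a statement about exactly those primes; it is literally equivalent to the injectivity of $\bar\psi$ you are trying to establish, and it can fail without the theorem's hypotheses (this is why the ``moreover'' clause is conditional). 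In \cite{SUV03} this step is the content of their one-step deformation criterion (their Theorem 3.1) combined with an induction on the rank (their 3.2--3.5); it is a substantive argument, not a consequence of prime avoidance. Second, your converses are circular. You established the deformation only under $\grade \R(E)_+ \geq e$, yet in the backward direction of (a) you assume only that $\R(I)$ is Cohen--Macaulay, and in the backward direction of (c) only that $I$ is of linear type; in neither case is ``lifting the regular sequence'' available, because the regularity of the $y_j$ and the isomorphism $\R(I)\cong\R(E'')/(F'')$ are precisely what is not yet known (your parenthetical appeal to $y_1,\ldots,y_{e-1}$ being regular on $\S(E'')$ in (c) is likewise unjustified). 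The actual proof runs the one-step criterion upward: Cohen--Macaulayness (respectively, linear type) of the smaller Rees algebra supplies the grade condition needed to validate the next deformation step, and this inductive bootstrapping is the missing mechanism in your outline.
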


The latter statement says that $\R(E'')$ is a \textit{deformation} of $\R(I)$. When this happens (e.g., when assumption (b) in the proceeding theorem is satisfied), the Cohen-Macaulay property of the special fiber ring also transfers back and forth between the module $E$ and a generic Bourbaki ideal $I$ of $E$. 

\begin{thm}[{\rm \cite[4.7 and 4.8]{Costantini}}]\label{CMfiber}
    In the setting of \cref{NotationBourbaki} and with the assumptions of \cref{existBourbaki}, the following statements are true. 
    \begin{itemize}
      \item[{\rm(a)}] If $\F(E)$ is Cohen-Macaulay, then $\F(I)$ is Cohen-Macaulay.
      \item[{\rm(b)}] Assume that either $\, \R(I)$ is $S_2$, or $\,\depth_{\,} \R(I_{\q}) \geq 2\,$ for all $\q \in \spec(R'')$ so that $I_{\q}$ is not of linear type. If $\F(I)$ is Cohen-Macaulay, then $\F(E)$ is Cohen-Macaulay, $\F(I) \cong \F(E'')/(F'')$ and $y_1, \ldots, y_{e-1}$ form a regular sequence on $\F(E'')$.
    \end{itemize}
 \end{thm}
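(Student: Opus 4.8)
The plan is to express $\F(I)$ as a quotient of $\F(E'')$ by the images of the generic elements $y_1,\dots,y_{e-1}$, and then to transfer the Cohen--Macaulay property across this quotient, exploiting that these images are \emph{general} linear forms. First I would pass to the generic extension. Since $R''=R(Z)$ is a localization of a polynomial extension of $R$, the map $R\to R''$ is flat, the formation of the Rees algebra commutes with it, and the residue field of $R''$ is $k(Z)$; hence $\F(E'')\cong\F(E)\otimes_k k(Z)$. As $k(Z)/k$ is a faithfully flat field extension, $\F(E)$ is Cohen--Macaulay if and only if $\F(E'')$ is, which reduces both statements to a comparison of $\F(E'')$ and $\F(I)$. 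By \Cref{MainBourbaki}, once its hypothesis $\grade\R(E)_+\geq e$ is in force (a preliminary point guaranteed in part~(b) by the $S_2$/depth assumptions), $\R(E'')$ is a deformation of $\R(I)$: one has $\R(I)\cong\R(E'')/(F'')$ with $y_1,\dots,y_{e-1}$ a regular sequence on $\R(E'')$. Writing $\m''$ for the maximal ideal of $R''$ and reducing modulo $\m''$ yields the key identity
$$\F(I)\cong\F(E'')/(y_1^{\ast},\dots,y_{e-1}^{\ast}),$$
where $y_j^{\ast}\in[\F(E'')]_1$ denotes the image of $y_j=\sum_i Z_{ij}a_i$. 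Because the coefficients $Z_{ij}$ are generic over $k$, the forms $y_1^{\ast},\dots,y_{e-1}^{\ast}$ are general linear forms of the standard-graded $k(Z)$-algebra $\F(E'')$; in particular they form a filter-regular sequence, and since $\dim\F(E'')=\ell(E)\geq e$ they drop the dimension by exactly $e-1$, so that $\dim\F(I)=\dim\F(E'')-(e-1)$ (equivalently $\ell(E)=\ell(I)+e-1$).

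For part (a), suppose $\F(E)$, hence $\F(E'')$, is Cohen--Macaulay. Passing to the quotient by the $e-1$ elements $y_1^{\ast},\dots,y_{e-1}^{\ast}$ lowers depth by at most $e-1$, whence
$$\depth\F(I)\ \geq\ \depth\F(E'')-(e-1)\ =\ \dim\F(E'')-(e-1)\ =\ \dim\F(I).$$
Therefore $\depth\F(I)=\dim\F(I)$, and $\F(I)$ is Cohen--Macaulay. (Equivalently, in a Cohen--Macaulay standard-graded algebra over an infinite field a sequence of at most $\dim$ general linear forms is regular, so the $y_j^{\ast}$ form a regular sequence on $\F(E'')$ and the quotient $\F(I)$ is again Cohen--Macaulay.)

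Part (b) carries the real content, since now the depth inequality points the wrong way: the Cohen--Macaulayness of $\F(I)=\F(E'')/(y^{\ast})$ does not on its own bound $\depth\F(E'')$ from below, as the filter-regular sequence $y^{\ast}$ need not be regular. The crux is to upgrade $y_1^{\ast},\dots,y_{e-1}^{\ast}$ from filter-regular to a genuine regular sequence on $\F(E'')$, equivalently to prove $\depth\F(E'')\geq e-1$; granting this, $\F(E'')$ is Cohen--Macaulay because its quotient by the regular sequence $y^{\ast}$ is the Cohen--Macaulay ring $\F(I)$, and the remaining assertions ($\F(I)\cong\F(E'')/(F'')$ and $y_1,\dots,y_{e-1}$ regular on $\F(E'')$) are then immediate. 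I expect this to be the main obstacle.

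To establish the depth bound I would track the finite-length discrepancies recorded by the graded local cohomology modules $H^i_{\mathfrak M}(\F(E''))$ for $i<e-1$, where $\mathfrak M$ is the homogeneous maximal ideal: cutting successively by the filter-regular forms $y_j^{\ast}$ produces long exact sequences that compare these modules with the low local cohomology of $\F(I)$, which vanishes since $\F(I)$ is Cohen--Macaulay, up to finite-length error terms. The hypotheses of (b)---that $\R(I)$ be $S_2$, or that $\depth\R(I_{\q})\geq 2$ at every prime $\q$ for which $I_{\q}$ is not of linear type---are precisely what kill these error terms: through the deformation $\R(E'')\to\R(I)$ and the local freeness built into \Cref{existBourbaki}, they preclude the low-dimensional components that would otherwise obstruct the regularity of $y^{\ast}$, forcing $\depth\F(E'')\geq e-1$. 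This is the delicate step, and the one place where the extra $S_2$/depth hypotheses, absent from part (a), are indispensable.
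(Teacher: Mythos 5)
A preliminary remark: this paper contains no proof of \Cref{CMfiber} at all --- the theorem is imported verbatim from \cite[4.7 and 4.8]{Costantini} --- so your attempt can only be judged on its own merits and against that reference, not against an in-paper argument.

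The central gap is in part (a). Your entire argument rests on the ``key identity'' $\F(I)\cong\F(E'')/(y_1^{\ast},\dots,y_{e-1}^{\ast})$, which you obtain by invoking \Cref{MainBourbaki} and reducing modulo $\m''$. But \Cref{MainBourbaki} yields the deformation $\R(I)\cong\R(E'')/(F'')$ only under one of its hypotheses: $\grade\R(E)_+\geq e$, or $\R(E)$ (equivalently $\R(I)$) Cohen--Macaulay, or linear type. In part (a) the only assumption is that $\F(E)$ is Cohen--Macaulay, which implies none of these; you yourself note that the grade hypothesis is ``guaranteed in part (b)'', yet you use the identity in part (a) anyway. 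Without it, all one has unconditionally is the natural surjection $\F(E'')/(y_1^{\ast},\dots,y_{e-1}^{\ast})\twoheadrightarrow\F(I)$ induced by $E''\twoheadrightarrow E''/F''\cong I$, and Cohen--Macaulayness does not descend along a surjection, even a dimension-preserving one (e.g.\ $k[x,y,z]/(xz)\twoheadrightarrow k[x,y,z]/(xz,yz)$). The fact that the isomorphism $\F(I)\cong\F(E'')/(F'')$ appears as a \emph{conclusion} of part (b), under extra hypotheses, is a structural signal that it cannot be assumed at the outset; so part (a) requires an argument that does not presuppose the deformation, and your proposal does not supply one. (Separately, your first formulation in (a) --- quotienting by $e-1$ elements ``lowers depth by at most $e-1$'' --- is not valid for arbitrary elements; only your parenthetical regular-sequence version is sound, and it still rests on the unproven identity.)

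Part (b) has the right skeleton but its two decisive steps are asserted rather than proved. First, the claim that the $S_2$/depth hypotheses put $\grade\R(E)_+\geq e$ ``in force'' is not contained in \Cref{MainBourbaki}: showing that these hypotheses on $\R(I)$ yield the Rees-algebra deformation $\R(I)\cong\R(E'')/(F'')$ with $y_1,\dots,y_{e-1}$ regular on $\R(E'')$ is precisely the main technical theorem of \cite{Costantini}, and cannot be taken for granted in a self-contained proof. Second, even granting that deformation, the passage from regularity of $y_1,\dots,y_{e-1}$ on $\R(E'')$ to regularity of their images on $\F(E'')=\R(E'')/\m''\R(E'')$ --- what you correctly call the crux --- is left as a plan: you propose long exact sequences in local cohomology with ``finite-length error terms'' killed by the hypotheses, but you never exhibit these sequences, never verify that the error terms have finite length, and never explain how a condition on $\R(I)$ (namely $S_2$, or depth at the non-linear-type primes) controls the local cohomology of $\F(E'')$. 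Since this step is exactly where the asymmetry between (a) and (b) lives, the proposal as written does not establish the theorem.
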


 \begin{rem}[{\cite[p. 617]{SUV03}, \cite[5.1]{Costantini}}]\label{gbipresentation}
Let $(R,\m,k)$ be a Noetherian local ring and $E$ an $R$-module as in \Cref{existBourbaki}. Furthermore, let $R^m\overset{\varphi}{\rightarrow}R^n \rightarrow E\rightarrow 0$ be a minimal presentation of $E$.
\begin{itemize}
    \item[(a)] With $Z$ and $y_j$ as in \Cref{NotationBourbaki}, we may assume that $E''$ is generated by the images of $\,y_1,\ldots,y_{e-1}$, $a_e,\ldots,a_n$ in $E''$. Moreover, after multiplying $\varphi$ by an invertible matrix with entries in $k(Z)$, we may assume that $\varphi$ presents $E''$ with respect to this generating set. In particular, we have that
    $$\varphi= \begin{bmatrix} \hspace{2mm}A\hspace{2mm} \\
    \psi\end{bmatrix}$$
where $A$ and $\psi$ are submatrices of sizes $(e-1)\times m$ and $(n-e+1) \times m$ respectively. 
By construction, it follows that, if $E$ is torsion-free, then $\mathrm{Coker}(\psi)$ is isomorphic to a generic Bourbaki ideal $I$ of $E$. Moreover, $\psi$ is a minimal presentation matrix of $I$ since $\mu(I) = n-e+1$.
    \item[(b)] In particular, if $R=k[x_1, \ldots, x_d]$ and $E$ has a linear presentation, then $I$ does as well.
\end{itemize}
\end{rem}

\section{Height two perfect ideals with a linear presentation}\label{idealsection}

In this section, we aim to determine the defining ideal $\J$ of the Rees algebra of a grade two perfect ideal $I \subseteq k[x_1, \ldots, x_d]$, under the assumption that $I$ does not satisfy the $G_d$ condition, but rather the weaker condition $G_{d-1}$. More precisely, our working setting is the following. 

\begin{set}\label{idealsetting}
    Let $R=k[x_1,\ldots,x_d]$ be a standard-graded polynomial ring over a field $k$, with $\m = (x_1,\ldots,x_d)$ and $d\geq 3$. Let $I$ be a perfect $R$-ideal of height $2$ generated by $\mu(I) = n\geq d+1$ many elements, satisfying the following assumptions:

    \begin{itemize}
        \item[(i)] The ideal $I$ has a presentation matrix $\varphi$ consisting of linear entries with $I_1(\varphi) = \m$.
        \item[(ii)] After possibly a change of coordinates, the matrix $\varphi$ has rank 1 modulo an ideal generated by $d-1$ variables.
        \item[(iii)] The ideal $I$ satisfies $G_{d-1}$, but not $G_d$.
    \end{itemize}
\end{set}

When $d=3$ our setting coincides with the setting of \cite{Nguyen17}, while when $n=d+1$ it recovers the setting of \cite{Nguyen14}. In both settings of \cite{Nguyen14,Nguyen17} it is assumed that $k$ is an algebraically closed field, an assumption we may omit in our situation. Moreover, from \Cref{d+epres} and the discussion prior to its statement, this assumption may also be omitted in \cite{Nguyen14,Nguyen17}. In particular, assumption (ii) can be removed when $\mu(I) = d+1$, as it is implied by the remaining conditions; see \cite[3.3]{Nguyen14} and \Cref{d+epres}.

\begin{rem}\label{datleast3}
It is assumed that $d\geq 3$ in \Cref{idealsetting}, as otherwise assumption (iii) would never be satisfied by an ideal of height two. Indeed, any ideal $I$ with $\hgt I= g$ satisfies $G_{g}$ automatically. Moreover, we assume that $\mu(I) \geq d+1$ in \Cref{idealsetting}, else the remaining assumptions are contradicted and the defining ideal of $\R(I)$ is already known.
Indeed, if $\mu(I) \leq d-1$ , then since $I$ satisfies $G_{d-1}$, it follows that $I$ satisfies $G_d$ as well, contradicting assumption (iii). Moreover, it then follows that $I$ satisfies $G_\infty$ and is actually of linear type by \cite[9.1]{HSV81}.  If $\mu(I) =d$, then assumption (iii) implies that $\hgt \fitt_{d-1}(I) = d-1$ by \Cref{hgtlfittlemma}. However, this is precisely the ideal of entries of $\varphi$, $\fitt_{d-1}(I)= I_1(\varphi)$, thus contradicting assumption (i). Moreover, it then follows that $I$ belongs to a polynomial subring $R'$ of $R$ with $\dim R'=d-1$. The defining ideal of $\R(I)$ is then known due to \cite[1.3]{MU96}.
\end{rem}

\begin{rem}\label{idealpres} 
Following assumption (ii) in \Cref{idealsetting}, we may assume that $\varphi$ has rank one modulo the ideal $(x_1,\ldots,x_{d-1})$. We proceed supposing that the appropriate change of coordinates has been made and this is the ideal in assumption (ii). Let $\overline{\varphi}$ denote the image of $\varphi$ modulo $(x_1,\ldots,x_{d-1})$, with entries in $k[x_d]$. After potential row and column operations, we may assume $\overline{\varphi}$ has a single nonzero entry, which can be assumed to be $x_d$ by linearity. Thus the presentation matrix $\varphi$ has the form 
\begin{equation}\label{phiform}
 \varphi =  \begin{pmatrix}
   & & & *\\
   & \varphi'& & \vdots\\
   & & & *\\
    *&\cdots& *& x_d
\end{pmatrix}  
\end{equation}
where the entries of $\varphi'$ and the ``$*$" entries are linear and contained in the subring $k[x_1,\ldots,x_{d-1}] \subset R$. 
\end{rem}

With assumption (ii) in \Cref{idealsetting}, not only does $\varphi$ have a particular form, but its Jacobian dual $B(\varphi)$ does as well. Indeed, as a consequence of \Cref{idealpres}, the Jacobian dual of $\varphi$, with respect to $x_1,\ldots,x_d$, is 
\begin{equation}\label{JD}
 B(\varphi) =  \begin{pmatrix}
   & & & \bullet \\
   & B' & & \vdots\\
   & & & \bullet \\
    0&\cdots&0 & T_n
\end{pmatrix}.
\end{equation} 
Notice that entries of $B'$ and the ``$\bullet$" entries are in $k[T_1, \ldots, T_n]$ and $B'$ is the Jacobian dual of $\varphi'$ with respect to $x_1,\ldots,x_{d-1}$, i.e. $[T_1 \ldots T_n] \cdot \varphi' = [x_1 \ldots x_{d-1}]\cdot B'$. 
In the particular cases when $n=d+1$ or $d=3$, it was shown in \cite[5.1]{Nguyen14}, \cite[4.6]{Nguyen17} and \cite[3.6]{DRS18} that the non-linear equations defining the Rees algebra $\R(I)$ are precisely the generators of $I_{d-1}(B')$, under the additional assumption that $k$ is algebraically closed. We will prove in \Cref{defideal} that this actually holds for every ideal $I$ which satisfies the assumptions of \Cref{idealsetting}, with no restrictions on the field $k$. As a preliminary step, we identify the non-linear type locus of $I$ and prove that it has a unique minimal element. This recovers and generalizes what was known in the cases $d=3$ (see \cite[4.4 and 4.5]{Nguyen17} and \cite[2.5]{DRS18}) and $n=d+1$ (\cite[3.4 and 3.5]{Nguyen14}).

\begin{prop}\label{LTlocus}
    With $I$ and $\varphi$ as above, $I_\p$ is of linear type for all $\p \in \spec(R) \setminus V(I_{n-d+1}(\varphi))$. 
\end{prop}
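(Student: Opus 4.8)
The plan is to identify the complement of $V(I_{n-d+1}(\varphi))$ with the locus where $I$ has few local generators, and then to show that at each such prime $I$ localizes to an ideal of linear type, via the criterion of Herzog--Simis--Vasconcelos already invoked in \Cref{datleast3}.

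First I would record that $I_{n-d+1}(\varphi) = \fitt_{d-1}(I)$, since $\varphi$ is an $n \times (n-1)$ Hilbert--Burch matrix and $\fitt_i(I) = I_{n-i}(\varphi)$. By \Cref{fitt}(c), a prime $\p$ lies in $\spec(R) \setminus V(I_{n-d+1}(\varphi))$ precisely when $\mu(I_\p) \leq d-1$. Fix such a $\p$. If $\p \not\supseteq I$ then $I_\p = R_\p$ is trivially of linear type, so I may assume $\p \in V(I)$, in which case $I_\p$ remains a perfect $R_\p$-ideal of grade two.

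The heart of the argument is to verify that $I_\p$ satisfies $G_\infty$ as an ideal of $R_\p$; equivalently, that $\mu(I_\q) \leq \hgt \q$ for every $\q \in V(I)$ with $\q \subseteq \p$. I would split into two ranges. For $\hgt \q \leq d-2$, this is exactly the content of the hypothesis that $I$ satisfies $G_{d-1}$. For $\hgt \q \geq d-1$, I would instead use that the minimal number of generators cannot increase under generization: since $\q \subseteq \p$, a minimal generating set of $I_\p$ localizes to a generating set of $I_\q$, whence $\mu(I_\q) \leq \mu(I_\p) \leq d-1 \leq \hgt \q$. Combining the two cases yields $G_\infty$ for $I_\p$.

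Finally, because a grade-two perfect ideal is strongly Cohen--Macaulay, the localization $I_\p$ is strongly Cohen--Macaulay and satisfies $G_\infty$, so \cite[9.1]{HSV81} gives that $I_\p$ is of linear type, completing the argument. The one delicate point is the boundary range $\hgt \q = d-1$, which lies outside the reach of $G_{d-1}$; this is precisely where the defining hypothesis $\mu(I_\p) \leq d-1$ (that is, $\p \notin V(I_{n-d+1}(\varphi))$) must be exploited, through semicontinuity of $\mu$, rather than the global $G_s$ assumption.
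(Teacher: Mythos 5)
Your proposal is correct and follows essentially the same route as the paper: both identify $\spec(R)\setminus V(I_{n-d+1}(\varphi))$ with the locus $\mu(I_\p)\leq d-1$ via \Cref{fitt}, deduce that $I_\p$ satisfies $G_\infty$, and conclude linear type from \cite[9.1]{HSV81}. The paper's proof is just a terser version of yours; your case split on $\hgt \q$ (using $G_{d-1}$ in low height and semicontinuity of $\mu$ at the boundary) and the remark on strong Cohen--Macaulayness simply make explicit the details the paper leaves to the reader.
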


\begin{proof}
Notice that $V(I_{n-d+1}(\varphi))= V(\fitt_{d-1}(\varphi)) = \{\p \in \spec(R) \,|\, \mu(I_\p) \geq d\}$ by \Cref{fitt}. Thus it follows that $I_\p$ satisfies $G_\infty$ and is hence of linear type for all $\p \notin V(\fitt_{d-1}(I))$ by \cite[9.1]{HSV81}.
\end{proof}

\begin{prop}\label{minFitt}
 With the assumptions of \Cref{idealsetting}, we have that $\p=(x_1,\ldots,x_{d-1})$ is the unique minimal prime of $I_{n-d+1}(\varphi)$.  
\end{prop}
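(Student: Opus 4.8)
The plan is to first reduce everything to a statement about the determinantal ideal $I_{n-d+1}(\varphi) = \fitt_{d-1}(I)$. By \Cref{hgtlfittlemma} (applied with $e=1$, $s = d-1$) we have $\hgt I_{n-d+1}(\varphi) = d-1$, and by \Cref{fitt} its variety is $\{\q \mid \mu(I_\q) \geq d\}$. Using the normal form \eqref{phiform} of \Cref{idealpres}, the reduction $\overline{\varphi}$ modulo $\p = (x_1,\ldots,x_{d-1})$ has rank one, so every minor of size at least two lies in $\p$; since $n-d+1 \geq 2$ this gives $I_{n-d+1}(\varphi) \subseteq \p$. As $\hgt \p = d-1 = \hgt I_{n-d+1}(\varphi)$, the prime $\p$ is minimal over $I_{n-d+1}(\varphi)$. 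I would also note that every minimal prime $\q$ of $I_{n-d+1}(\varphi)$ has height exactly $d-1$: a minimal prime of height $\geq d$ would equal $\m$, which is impossible because $\p \subsetneq \m$ already lies in the variety.

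The core of the argument is uniqueness. Writing $R = R'[x_d]$ with $R' = k[x_1,\ldots,x_{d-1}]$, I would take an arbitrary minimal prime $\q$ (homogeneous, of height $d-1$) and contract it to $\q_0 \coloneq \q \cap R'$, a homogeneous prime with $\hgt \q_0 \in \{d-2,\, d-1\}$. If $\hgt \q_0 = d-1$ then $\q_0 = (x_1,\ldots,x_{d-1})$, forcing $\q \supseteq \p$ and hence $\q = \p$ by equality of heights. The whole problem therefore reduces to excluding the case $\hgt \q_0 = d-2$. In that situation $\q_0 R$ is a height-$(d-2)$ prime, so the $G_{d-1}$ hypothesis gives $\mu(I_{\q_0 R}) \leq d-2$, whence $I_{n-d+1}(\varphi) \not\subseteq \q_0 R$. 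Passing to the two-dimensional ring $\overline{R} \coloneq R/\q_0 R = D[x_d]$, where $D = R'/\q_0$ is a one-dimensional graded domain, I would study the image $\overline{\varphi}$ of $\varphi$, which by \eqref{phiform} retains the key feature that $x_d$ occurs in a single entry (the corner), all other entries lying in $D$. The prime $\q$ now corresponds to a height-one prime $\overline{\q}$ of $\overline{R}$ with $\rank \overline{\varphi} \leq n-d$ at $\overline{\q}$, while $\mu(I_{\q_0 R}) \leq d-2$ forces $\rank \overline{\varphi} \geq n-d+2$ at the generic point of $\overline{R}$.

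I would first rule out $x_d \in \overline{\q}$: since $D$ is a domain, $(x_d)$ is the only height-one prime containing $x_d$, so $\overline{\q} = (x_d)$; but $\overline{\varphi}$ differs from its specialization $\overline{\varphi}|_{x_d=0}$ only in the corner, hence by a rank-one perturbation, so the rank of $\overline{\varphi}|_{x_d=0}$ over $\mathrm{Quot}(D)$ is at least $(n-d+2)-1 = n-d+1 > n-d$, contradicting $\overline{\q} \supseteq I_{n-d+1}(\overline{\varphi})$. Thus $x_d \notin \overline{\q}$ and I may invert $x_d$; using the unit corner to take a Schur complement, $\rank \overline{\varphi} = 1 + \rank\!\big(\overline{\varphi}' - \overline{v}\, x_d^{-1} \overline{w}^{\,T}\big)$, so the $(n-1)\times(n-2)$ matrix $\overline{\varphi}' - \overline{v}\, x_d^{-1}\overline{w}^{\,T}$ must drop in rank from $\geq n-d+1$ generically to $\leq n-d-1$ at $\overline{\q}$, with $\overline{\q}$ lying over $(0)$ in $D$.

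The main obstacle is precisely excluding this last \emph{double} rank drop in codimension one. Over $\kappa(\overline{\q})$ the matrix takes the form $A - \beta B$, where $A = \overline{\varphi}'$ and $B = \overline{v}\,\overline{w}^{\,T}$ have entries in $\mathrm{Quot}(D)$, $B$ has rank at most one, and $\beta$ is the image of $x_d^{-1}$; since a rank-one correction lowers rank by at most one, a contradiction follows \emph{as soon as} $\rank_{\mathrm{Quot}(D)} \overline{\varphi}' \geq n-d+1$. The naive deletion-of-a-row-and-column estimate only yields $\rank \overline{\varphi}' \geq n-d$, losing exactly one unit, so the crux is to recover this missing $+1$. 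I expect to do so by exploiting the Hilbert–Burch structure of $\overline{\varphi}$ together with the linearity of $\varphi$ — equivalently, by iterating the splitting-off-the-unit procedure in an induction that uses the one-dimensionality of $D$ — to show that no such $\beta = x_d^{-1}$ can realize the extra rank drop. This contradiction excludes the case $\hgt \q_0 = d-2$ and completes the proof that $\p$ is the unique minimal prime of $I_{n-d+1}(\varphi)$.
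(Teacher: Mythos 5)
Your argument is sound and careful up through the Schur-complement reduction, but it then stops exactly where you flag it: nothing in the proposal proves that the matrix $N=\overline{\varphi}'-\overline{v}\,x_d^{-1}\overline{w}^{\,T}$ cannot drop from rank $\geq n-d+1$ at the generic point of $\overline{R}$ to rank $\leq n-d-1$ at $\overline{\q}$. A rank drop of two along a height-one prime is not a contradiction in itself (consider $x\cdot \mathrm{Id}$ along the prime $(x)$), so the conclusion must come from the special shape of $\varphi$, and the closing sentence (``I expect to do so by exploiting the Hilbert--Burch structure \ldots in an induction'') is a declaration of intent, not an argument. Moreover, the intermediate claim you propose to establish, $\rank_{\mathrm{Quot}(D)}\overline{\varphi}'\geq n-d+1$, is stronger than necessary and not obviously attainable: passing from $\overline{\varphi}$ to $\overline{\varphi}'$ deletes a row as well as a column, and the row deletion is precisely where a unit of rank can be lost.

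The idea you are missing --- which is the heart of the paper's own, much shorter, proof --- is to delete only the last \emph{column} of $\varphi$ and keep all $n$ rows. Let $\hat{\varphi}$ be the $n\times(n-2)$ submatrix consisting of the first $n-2$ columns; by the normal form (\ref{phiform}), every entry of $\hat{\varphi}$ lies in $R'=k[x_1,\ldots,x_{d-1}]$. Laplace expansion along the deleted column gives the key inclusion $I_{n-d+2}(\varphi)\subseteq I_{n-d+1}(\hat{\varphi})$, and this supplies exactly your missing $+1$. Indeed, in your case $\hgt\q_0=d-2$, the $G_{d-1}$ hypothesis gives $I_{n-d+2}(\varphi)\not\subseteq\q_0R$ (this is your statement that $\varphi$ has rank $\geq n-d+2$ at the generic point of $\overline{R}$), hence $I_{n-d+1}(\hat{\varphi})\not\subseteq\q_0R$; on the other hand $I_{n-d+1}(\hat{\varphi})\subseteq I_{n-d+1}(\varphi)\subseteq\q$, and since $I_{n-d+1}(\hat{\varphi})$ is generated by elements of $R'$, containment in $\q$ forces containment in $(\q\cap R')R=\q_0R$ --- a contradiction, with no Schur complement or perturbation argument needed. (In your rank language: stacking $\overline{w}^{\,T}$ back under $N$ produces a matrix that differs from the image $\hat{\overline{\varphi}}$ of $\hat{\varphi}$ by invertible row operations over $\kappa(\overline{\q})$; since the entries of $\hat{\overline{\varphi}}$ lie in $D$ and $\overline{\q}\cap D=(0)$, its rank at $\kappa(\overline{\q})$ equals its rank over $\mathrm{Quot}(D)$, which is $\geq n-d+1$, whence $\rank_{\kappa(\overline{\q})}N\geq n-d$.) In fact, once one has $I_{n-d+2}(\varphi)\subseteq I_{n-d+1}(\hat{\varphi})\subseteq(x_1,\ldots,x_{d-1})$, your entire case division on $\hgt\q_0$ becomes unnecessary: \Cref{hgtlfittlemma} forces $\hgt I_{n-d+1}(\hat{\varphi})=d-1$, so $I_{n-d+1}(\hat{\varphi})$ is the extension of an $(x_1,\ldots,x_{d-1})$-primary ideal of $R'$, and every minimal prime of $I_{n-d+1}(\varphi)$ must contain $(x_1,\ldots,x_{d-1})$ and hence, by minimality, equal it. That is the paper's entire proof.
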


\begin{proof}
   We modify the proof of \cite[4.5]{Nguyen17} according to the assumptions of \Cref{idealsetting}. First, notice that since $I$ satisfies $G_{d-1}$ but not $G_d$, we have $\hgt I_{n-d+2}(\varphi) = \hgt I_{n-d+1}(\varphi) = d-1$ by \Cref{hgtlfittlemma}. Now, let $\hat{\varphi}$ denote the $n \times (n-2)$ submatrix of $\varphi$ consisting of its first $n-2$ columns. As noted in \Cref{idealpres}, the entries of $\hat{\varphi}$ are linear forms in $k[x_1, \ldots,x_{d-1}]$ and also we have $I_{n-d+2}(\varphi) \subseteq I_{n-d+1}(\hat{\varphi}) \subseteq (x_1, \ldots, x_{d-1})$. Since $\hgt I_{n-d+2}(\varphi) = d-1 = \hgt (x_1, \ldots, x_{d-1})$, it follows that $\hgt I_{n-d+1}(\hat{\varphi})= d-1$. Thus $I_{n-d+1}(\hat{\varphi})$ is a $(x_1, \ldots, x_{d-1})$-primary ideal in $k[x_1,\ldots,x_{d-1}]$, and so $(x_1, \ldots, x_{d-1})$ is its only minimal prime. On the other hand, since $ I_{n-d+1}(\hat{\varphi}) \subseteq I_{n-d+1}(\varphi) \subseteq (x_1, \ldots, x_{d-1})$ and $\hgt I_{n-d+1}(\varphi) =d-1$, it follows that  $(x_1, \ldots, x_{d-1})$ is the unique minimal prime of $I_{n-d+1}(\varphi)$ as well.
\end{proof}

Our next task is to calculate the height of the ideal $I_{d-1}(B')$. First however, we describe the defining ideal $\J$ of the Rees algebra $\R(I)$ as a prime saturation. With $\varphi$ as in \Cref{idealpres}, recall from \Cref{prelimsection} that $\L= ([T_1,\ldots,T_n]\cdot \varphi)$ is the defining ideal of the symmetric algebra $\S(I)$.

\begin{prop}\label{Jasat}
 With the assumptions of \Cref{idealsetting}, $\J$ is a prime ideal of height $n-1$. Moreover, we have that $\J=\L:(x_1,\ldots,x_{d-1})^\infty$. 
\end{prop}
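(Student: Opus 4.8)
The plan is to separate the three assertions---primeness, the height $n-1$, and the saturation identity---and to treat the last one through a support computation on the torsion of the symmetric algebra. For primeness, recall that $\R(I)=R[It]$ is by construction a subring of the domain $R[t]$, so $\R(I)$ is a domain and its defining ideal $\J$ is prime. For the height, note that $R$ is a $d$-dimensional domain and $I\neq 0$, so $\dim \R(I)=d+1$; since $R[T_1,\ldots,T_n]$ is a Cohen--Macaulay (hence catenary and equidimensional) domain of dimension $n+d$, the prime $\J$ satisfies $\hgt \J=(n+d)-(d+1)=n-1$.

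For the inclusion $\L:(x_1,\ldots,x_{d-1})^\infty\subseteq \J$, I would simply combine $\L\subseteq \J$ with primeness. Since $R$ embeds into the domain $\R(I)$, no variable $x_i$ maps to zero, so $(x_1,\ldots,x_{d-1})\not\subseteq \J$; as $\J$ is prime this forces $\J:(x_1,\ldots,x_{d-1})^\infty=\J$, and applying $-:(x_1,\ldots,x_{d-1})^\infty$ to the containment $\L\subseteq\J$ yields the desired inclusion.

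The reverse inclusion is the crux. I would set $\tau=\J/\L$, the $R$-torsion submodule of $\S(I)=R[T_1,\ldots,T_n]/\L$, which is a finitely generated $R[T_1,\ldots,T_n]$-module. By \Cref{minFitt} the ideal $I_{n-d+1}(\varphi)=\fitt_{d-1}(I)$ has radical $(x_1,\ldots,x_{d-1})$, so $V(\fitt_{d-1}(I))=V(x_1,\ldots,x_{d-1})$. Hence for every prime $\p\in\spec(R)$ with $(x_1,\ldots,x_{d-1})\not\subseteq\p$, \Cref{LTlocus} shows that $I_\p$ is of linear type, i.e. $\L_\p=\J_\p$, and therefore $\tau\otimes_R R_\p=0$. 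Now if $\q\in\spec\big(R[T_1,\ldots,T_n]\big)$ lies in the support of $\tau$, then $\tau_\q$ is a localization of $\tau\otimes_R R_\p$ with $\p=\q\cap R$, forcing $(x_1,\ldots,x_{d-1})\subseteq\p\subseteq\q$. Thus $\supp_{R[T_1,\ldots,T_n]}\tau\subseteq V\big((x_1,\ldots,x_{d-1})R[T_1,\ldots,T_n]\big)$, and since $\tau$ is finitely generated we obtain $(x_1,\ldots,x_{d-1})^N\tau=0$ for some $N$, i.e. $(x_1,\ldots,x_{d-1})^N\,\J\subseteq\L$. This gives $\J\subseteq \L:(x_1,\ldots,x_{d-1})^\infty$, and together with the previous paragraph establishes equality.

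I expect the main obstacle to be the conceptual identification of the non-linear-type locus: everything reduces to knowing that $I$ is of linear type precisely away from $V(x_1,\ldots,x_{d-1})$, which is exactly the combination of \Cref{LTlocus} with the uniqueness of the minimal prime of $\fitt_{d-1}(I)$ in \Cref{minFitt}. Once this is in hand, the remaining steps---passing from the vanishing of $\tau$ after localizing at primes of $R$ to the vanishing of $\tau_\q$ over $R[T_1,\ldots,T_n]$, and extracting a single uniform exponent $N$ from finite generation---are routine commutative algebra.
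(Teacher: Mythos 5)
Your proof is correct and takes essentially the same route as the paper: both get primeness and height $n-1$ from $\R(I)$ being a domain of dimension $d+1$, deduce $\L:(x_1,\ldots,x_{d-1})^\infty\subseteq\J$ from primeness of $\J$ together with $(x_1,\ldots,x_{d-1})\not\subseteq\J$, and prove the reverse inclusion by showing the finitely generated $R[T_1,\ldots,T_n]$-module $\J/\L$ is killed by a power of $(x_1,\ldots,x_{d-1})$, using \Cref{LTlocus} and \Cref{minFitt}. Your support computation over $R[T_1,\ldots,T_n]$ is simply a more detailed rendition of the paper's assertion that $\J/\L$ is supported only at $(x_1,\ldots,x_{d-1})$ and $\m$.
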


\begin{proof}
 The initial statement is clear as $\R(I)\cong R[T_1, \ldots, T_n]/\J$ and $\R(I)$ is a domain of dimension $d+1$. For the latter statement, let $y \in \L:(x_1,\ldots,x_{d-1})^\infty$, and so there exists an integer $s$ such that $y (x_1,\ldots,x_{d-1})^s \in \L \subseteq \J$. Since $(x_1,\ldots,x_{d-1})^s \nsubset \J$ and $\J$ is a prime ideal, it then follows that $y \in \J$, hence $\L:(x_1,\ldots,x_{d-1})^\infty \subseteq \J$.  To prove the reverse containment, consider the quotient $\A = \J/\L$. It suffices to show that $\A$ is annihilated by some power of $(x_1,\ldots,x_{d-1})$. From \Cref{LTlocus} and \Cref{minFitt}, it follows that $\A$ is supported only at $\p= (x_1,\ldots,x_{d-1})$ and $\m=(x_1,\ldots,x_{d})$. Thus $\p = \sqrt{\ann \A}$, and so indeed a power of $\p$ annihilates $\A$. 
\end{proof}

\begin{prop}\label{B'hgt}
With $B'$ the submatrix of $B(\varphi)$ defined in (\ref{JD}), $I_{d-1}(B')$ is a Cohen-Macaulay prime ideal of height $n-d$.     
\end{prop}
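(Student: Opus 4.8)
The plan is to descend to the subring $R'=k[x_1,\ldots,x_{d-1}]$, where the relevant presentation satisfies the \emph{top} $G$-condition, and then appeal to the already-understood case. Let $\hat\varphi$ denote the $n\times(n-2)$ matrix formed by the first $n-2$ columns of $\varphi$, as in the proof of \Cref{minFitt}; its entries are linear forms in $R'$, and by (\ref{JD}) the matrix $B'$ is exactly the Jacobian dual of $\hat\varphi$ with respect to $x_1,\ldots,x_{d-1}$, so that $[T_1\ldots T_n]\cdot\hat\varphi=[x_1\ldots x_{d-1}]\cdot B'$. I would therefore study the cokernel $E'$ of $\hat\varphi$ over $R'$. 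Once one checks that $\hat\varphi$ has rank $n-2$, the presentation $0\to R'^{\,n-2}\overset{\hat\varphi}{\longrightarrow}R'^{\,n}\to E'\to 0$ exhibits $E'$ as a linearly presented module of projective dimension one and rank two over $R'$, having $B'=B(\hat\varphi)$ as its Jacobian dual.

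The module $E'$ satisfies $G_{d-1}$, which for the $(d-1)$-dimensional ring $R'$ is the top condition $G_{\dim R'}$. Indeed, in the notation of \Cref{defGs} with $e=2$, this amounts to $\hgt\fitt_i(E')\geq i$ for $2\leq i\leq d-1$, where $\fitt_i(E')=I_{n-i}(\hat\varphi)$. For such $i$ one has $n-i\geq n-d+1$, hence $I_{n-i}(\hat\varphi)\subseteq I_{n-d+1}(\hat\varphi)$, and the latter ideal is $(x_1,\ldots,x_{d-1})$-primary of height $d-1$ by the proof of \Cref{minFitt}; thus $\hgt\fitt_i(E')\geq d-1\geq i$, as needed. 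The remaining structural hypotheses on $\hat\varphi$ --- that it has rank $n-2$ and that $I_1(\hat\varphi)=(x_1,\ldots,x_{d-1})$ --- would be read off from the shape (\ref{phiform}).

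With $E'$ recognized as a linearly presented module of projective dimension one and rank two satisfying the top condition $G_{\dim R'}$, I would invoke the known description of its blowup algebras in this classical case, e.g. \cite[4.11]{SUV03} together with \cite{SUV93,MU96}: the special fiber is $\F(E')\cong k[T_1,\ldots,T_n]/I_{d-1}(B')$, it is Cohen--Macaulay, and, being the homogeneous coordinate ring of the irreducible image of the associated map, it is a domain. Hence $I_{d-1}(B')$ is a Cohen--Macaulay prime ideal. Finally, since $E'$ satisfies $G_{\dim R'}$ but is not of linear type (equivalently $I_{d-1}(B')\neq 0$), its analytic spread is maximal, $\ell(E')=\dim R'+\rank E'-1=d$, so that $\dim\bigl(k[T_1,\ldots,T_n]/I_{d-1}(B')\bigr)=\dim\F(E')=d$ and therefore $\hgt I_{d-1}(B')=n-d$. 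The main obstacle is this last step: cleanly placing the rank-two module $E'$ within the scope of the known $G_d$-case results and extracting its special fiber as a Cohen--Macaulay domain, since that machinery is usually stated for ideals and, for modules, passes through the generic Bourbaki reduction; by contrast, the verification of $G_{d-1}$ itself reduces immediately to \Cref{minFitt}.
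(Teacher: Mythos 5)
Your route is genuinely different from the paper's: the paper never leaves the ideal $I$, deducing the statement from \Cref{Jasat} via \cite[2.2 and 2.4]{BM16} (deleting $\ell_{n-1}$ preserves primality of the saturation and drops its height to $n-2$, which then transfers to primality and height $n-d$ of $I_{d-1}(B')$), with Cohen--Macaulayness coming from \cite[Thm.~1]{EN62} and \cite[A2.13]{Eisenbud} because $n-d$ is the maximal possible height. Your plan --- realize $I_{d-1}(B')$ as the defining ideal of the special fiber of the rank-two module $E'=\mathrm{Coker}(\hat\varphi)$ over $R'=k[x_1,\ldots,x_{d-1}]$, check the top condition $G_{\dim R'}$, and quote the classical $G_d$ theory --- is an appealing alternative, but as written it has a genuine gap at exactly the step that makes it run.

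The gap is your verification that $E'$ satisfies $G_{d-1}$. From $I_{n-i}(\hat\varphi)\subseteq I_{n-d+1}(\hat\varphi)$ and $\hgt I_{n-d+1}(\hat\varphi)=d-1$ you infer $\hgt\fitt_i(E')\geq d-1$; but containment of ideals gives the \emph{opposite} inequality ($A\subseteq B$ forces $\hgt A\leq\hgt B$), so this argument proves nothing. Worse, the intermediate claim is false in general: $\fitt_2(E')=I_{n-2}(\hat\varphi)$ is an ideal of maximal minors of an $n\times(n-2)$ matrix, hence $\hgt\fitt_2(E')\leq 3$ by \cite[Thm.~1]{EN62}, contradicting ``$\geq d-1$'' as soon as $d\geq 5$. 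The statement you actually need, namely $\hgt\fitt_i(E')\geq i$ for $2\leq i\leq d-1$, is true, but it must be obtained differently; for instance, for $\p\in\spec(R')$ with $1\leq\dim R'_\p\leq d-2$, the matrix $\varphi$ has only one column more than $\hat\varphi$, so $\mu(E'_\p)\leq\mu(I_{\p R})+1$, while $\hgt \p R=\hgt\p$ and the hypothesis that $I$ satisfies $G_{d-1}$ give $\mu(I_{\p R})\leq\max\{1,\hgt\p\}=\hgt\p$; hence $\mu(E'_\p)\leq\dim R'_\p+1$, which is precisely $G_{d-1}$ for the rank-two module $E'$. With that repaired, the remainder of your outline is workable but still needs care: \cite[4.11]{SUV03} yields that $\R_{R'}(E')$ is Cohen--Macaulay with defining ideal $(\ell_1,\ldots,\ell_{n-2})+I_{d-1}(B')$, whence $\F(E')\cong k[T_1,\ldots,T_n]/I_{d-1}(B')$ as in \Cref{F(I)defideal}; primality then requires justifying that $\F(E')$ is a domain (it embeds into the domain $\R_{R'}(E')$ because $E'$ is generated in a single degree --- the ``image of a rational map'' phrasing is only literal for ideals), and Cohen--Macaulayness of $I_{d-1}(B')$ is most cleanly recovered, once the height $n-d$ is known, by the same Eagon--Northcott argument the paper uses, rather than from properties of the fiber.
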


\begin{proof}
Recall that $B'$ is a $(d-1) \times (n-2)$ matrix with linear entries in $k[T_1,\ldots,T_n]$. Moreover, with the form of $B(\varphi)$ given in (\ref{JD}), we also have that $[\ell_1\ldots\ell_{n-2}] = [x_1\ldots x_{d-1}]\cdot B'$. By \Cref{Jasat} we have that $\L:(x_1,\ldots,x_{d-1})^\infty$ is a prime ideal of height $n-1$. From \cite[2.2]{BM16} it then follows that $(\ell_1,\ldots,\ell_{n-2}):(x_1,\ldots,x_{d-1})^\infty$ is a prime ideal of height $n-2$ (we note that the assumption that $(\ell_1,\ldots,\ell_{n-1})$ be contained in $(x_1,\ldots,x_{d-1})$ in the statement of \cite[2.2]{BM16} is superfluous, as can be seen in its proof). Hence, from \cite[2.4]{BM16} it follows that $I_{d-1}(B')$ is a prime ideal with $\hgt I_{d-1}(B') = (n-2)-(d-1)+1 = n-d$. Since this is the maximal possible height \cite[Thm.~1]{EN62}, the Cohen-Macaulayness of $I_{d-1}(B')$ then follows from \cite[A2.13]{Eisenbud}.
\end{proof}

We are now ready to prove that the generators of $I_{d-1}(B')$ are precisely the non-linear equations of $\R(I)$. 
To this end, we consider the auxiliary matrix
\begin{equation}\label{idealB''matrix}
 B'' =  \begin{pmatrix}
   & & & \bullet \\
   & B' & & \vdots\\
   & & & \bullet \\
    0&\cdots&0 & 1
\end{pmatrix}   
\end{equation}
which is obtained from the expression of $B(\varphi)$ given in (\ref{JD}) by dividing the last row by $T_n$. Notice that $[\ell_1\ldots \ell_{n-1}] = [x_1\ldots x_{d-1} \,\, x_dT_n] \cdot B''$ and also that $I_{d-1}(B') = I_d(B'')$. 

\begin{thm}\label{cmhgt}
 With the assumptions of \Cref{idealsetting}, $\L+ I_{d-1}(B')$ is a Cohen-Macaulay ideal of height $n-1$. 
\end{thm}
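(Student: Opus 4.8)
The plan is to realize $\L + I_{d-1}(B')$ as a residual intersection of a complete intersection and then invoke \Cref{thmResInt}. Working in $S = R[T_1,\ldots,T_n]$, I would introduce the ideal $\mathfrak b = (x_1,\ldots,x_{d-1},\, x_dT_n) \subseteq S$. First I would observe that $x_1,\ldots,x_{d-1},x_dT_n$ form a regular sequence: the variables $x_1,\ldots,x_{d-1}$ are obviously regular, and modulo them $x_dT_n$ is a nonzero element of the domain $k[x_d,T_1,\ldots,T_n]$, hence a nonzerodivisor. Thus $\mathfrak b$ is a complete intersection of height $d$. From the factorization $[\ell_1\ldots\ell_{n-1}]=[x_1\ldots x_{d-1}\,\,x_dT_n]\cdot B''$ in \eqref{idealB''matrix}, each $\ell_j$ lies in $\mathfrak b$, so $\L=(\ell_1,\ldots,\ell_{n-1})\subseteq\mathfrak b$, while a Cramer's-rule computation on the $d\times d$ minors of $B''$ shows $I_d(B'')\cdot\mathfrak b\subseteq\L$, i.e. $I_{d-1}(B')=I_d(B'')\subseteq \L:\mathfrak b$. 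In particular $\L+I_{d-1}(B')\subseteq \L:\mathfrak b$.

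The heart of the argument is to prove $\hgt(\L:\mathfrak b)=n-1$, which certifies that $\L:\mathfrak b$ is an $(n-1)$-residual intersection of $\mathfrak b$ (note $n-1\geq d=\hgt\mathfrak b$ since $n\geq d+1$). For the upper bound, since $\p=(x_1,\ldots,x_{d-1})\subseteq\mathfrak b$, I would use $\L:\mathfrak b\subseteq \L:\p\subseteq \L:\p^\infty=\J$ from \Cref{Jasat}, whence $\hgt(\L:\mathfrak b)\leq\hgt\J=n-1$. For the lower bound, by the containment above it suffices to show every prime $Q\supseteq\L+I_{d-1}(B')$ has $\hgt Q\geq n-1$, which I would split according to $q:=Q\cap R$. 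If $\p\not\subseteq q$, then $q\notin V(I_{n-d+1}(\varphi))$ by \Cref{minFitt}, so $I_q$ is of linear type by \Cref{LTlocus}; hence $\L_Q=\J_Q$, and as $Q\supseteq\L$ this forces $\J\subseteq Q$ and $\hgt Q\geq n-1$. If instead $\p\subseteq q$, then $Q$ contains $(x_1,\ldots,x_{d-1})+I_{d-1}(B')$; since $(x_1,\ldots,x_{d-1})$ and $I_{d-1}(B')$ involve disjoint sets of variables, their heights ($d-1$ and $n-d$ by \Cref{B'hgt}) add, giving $\hgt Q\geq n-1$.

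Having established $\hgt(\L:\mathfrak b)=n-1$, I would localize at the homogeneous maximal ideal $\mathfrak M=(x_1,\ldots,x_d,T_1,\ldots,T_n)$ to enter the Cohen-Macaulay local setting of \Cref{thmResInt}, with the complete intersection $\mathfrak b$ playing the role of $I$ (so $g=d$), the subideal $\L$ (with $s=n-1$ generators) playing the role of $\mathfrak a$, and $B''$ the associated matrix. Part (b) then gives $\L:\mathfrak b=\L+I_d(B'')=\L+I_{d-1}(B')$, and part (a) gives that $S/(\L:\mathfrak b)$ is Cohen-Macaulay. As all ideals involved are homogeneous, these equalities and the Cohen-Macaulay property descend from the localization at $\mathfrak M$ back to $S$, completing the proof that $\L+I_{d-1}(B')$ is a Cohen-Macaulay ideal of height $n-1$.

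I expect the main obstacle to be the lower bound $\hgt(\L+I_{d-1}(B'))\geq n-1$, and within it the case $\p\not\subseteq q$: one must argue carefully that linear type at $q$ propagates the equality $\L_Q=\J_Q$ to every prime $Q$ of $S$ lying over $q$, and thereby forces $Q$ to contain the height-$(n-1)$ prime $\J$. The bookkeeping required to transfer \Cref{thmResInt} between the graded ring $S$ and its localization at $\mathfrak M$ is a secondary technical point.
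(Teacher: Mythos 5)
Your proposal is correct and follows essentially the same route as the paper's proof: the same auxiliary complete intersection $(x_1,\ldots,x_{d-1},x_dT_n)$, the same Cramer's-rule containment $I_d(B'')\subseteq \L:\K$, the same two-case height estimate (using \Cref{Jasat}/\Cref{LTlocus} when $(x_1,\ldots,x_{d-1})\nsubseteq \q$ and \Cref{B'hgt} plus additivity of heights in disjoint variables when $(x_1,\ldots,x_{d-1})\subseteq \q$), and the same appeal to \Cref{thmResInt} to conclude both the equality $\L:\K=\L+I_{d-1}(B')$ and the Cohen-Macaulay property. Your extra care with the graded-to-local transfer when invoking \Cref{thmResInt} is a point the paper passes over silently, but it does not change the substance of the argument.
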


\begin{proof}
We proceed as in the first half of the proof of \cite[4.6]{Nguyen17}. Consider the ideal $\K = (x_1,\ldots,x_{d-1},x_d T_n)$ and notice that $\K \nsubseteq \J$. Hence $\L:\K \subseteq \J$ as $\L\subseteq \J$ and $\J$ is prime. Moreover, as $[\ell_1\ldots \ell_{n-1}] = [x_1\ldots x_{d-1} \,\, x_dT_n] \cdot B''$, from Cramer's rule it follows that $I_d(B'') \subseteq \L:\K$.
We show that $\L : \K$ is an $(n-1)$-residual intersection of $\K$, i.e. $\hgt \L:\K \geq n-1$. Since $x_1,\ldots,x_{d-1},x_d T_n$ is a regular sequence, from \Cref{thmResInt} one then obtains that $\L:\K$ is a Cohen-Macaulay ideal of height exactly $n-1$ and moreover, $\L:\K = \L + I_d(B'') = \L+I_{d-1}(B')$. 

Let $\q$ be a minimal prime of $\L:\K \subset R[T_1,\ldots,T_n]$ and let us consider two cases. If $(x_1,\ldots,x_{d-1})\nsubseteq \q$, then $\L_\q \subset (\L:\K)_\q \subseteq \J_\q = \L_\q$, where the last equality follows from \Cref{Jasat} and noting that $(x_1,\ldots,x_{d-1})_\q$ is the unit ideal. Thus $(\L:\K)_\q = \J_\q$ and since this is not the unit ideal, it follows that $\J \subseteq \q$, hence $\hgt \q \geq \hgt \J =n-1$. If instead $(x_1,\ldots,x_{d-1}) \subseteq \q$, as $I_d(B'') \subseteq \L:\K$, we then have that $(x_1,\ldots,x_{d-1}) + I_d(B'') \subseteq \q$. By \Cref{B'hgt}, $I_d(B'') = I_{d-1}(B')$ is a prime ideal of height $n-d$ in $k[T_1,\ldots,T_n]$. Therefore, $\hgt ((x_1,\ldots,x_{d-1}) + I_d(B'')) \geq (d-1) +(n-d) = n-1$, hence $\hgt \q \geq n-1$ in this case as well. Thus every minimal prime of $\L:\K$ has height at least $n-1$ and so $\hgt \L:\K \geq n-1$, as claimed. 
\end{proof}

\begin{thm}\label{defideal}
With the assumptions of \Cref{idealsetting}, the defining ideal of $\R(I)$ is 
$$\J = \L: (x_1,\ldots,x_{d-1}) =\L+I_{d-1}(B').$$
In particular, $\R(I)$ is Cohen-Macaulay.
\end{thm}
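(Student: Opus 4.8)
The plan is to trap $\J$ between two ideals that have already been identified in \Cref{cmhgt} and \Cref{Jasat}, and then use the Cohen--Macaulayness of the smaller one to force all three to agree. Writing $A = \L + I_{d-1}(B')$, the starting point is the chain
\[
\L + I_{d-1}(B') \;=\; \L:\K \;\subseteq\; \L:(x_1,\ldots,x_{d-1}) \;\subseteq\; \L:(x_1,\ldots,x_{d-1})^\infty \;=\; \J,
\]
where the first equality is exactly \Cref{cmhgt}, the middle inclusion holds because $(x_1,\ldots,x_{d-1}) \subseteq \K = (x_1,\ldots,x_{d-1},x_dT_n)$, and the final equality is \Cref{Jasat}. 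Hence it suffices to prove the single reverse inclusion $\J \subseteq A$; everything is then squeezed and all four ideals coincide, giving $\J = \L:(x_1,\ldots,x_{d-1}) = \L+I_{d-1}(B')$.

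By \Cref{cmhgt}, $A$ is Cohen--Macaulay of height $n-1$, hence unmixed, so every associated (in particular minimal) prime of $A$ has height exactly $n-1$. Since $A \subseteq \J$ and $\J$ is prime of height $n-1$ by \Cref{Jasat}, $\J$ is itself a minimal prime of $A$; the crux is to show it is the only one. I would run the same dichotomy as in the proof of \Cref{cmhgt} on a minimal prime $\q$ of $A$, now using $\hgt \q = n-1$. If $(x_1,\ldots,x_{d-1}) \nsubseteq \q$, then localizing collapses both $\K_\q$ and $\big((x_1,\ldots,x_{d-1})^\infty\big)_\q$ to the unit ideal, so $A_\q = \L_\q = \J_\q$ by \Cref{Jasat}; as this is proper, $\J \subseteq \q$, and equal heights force $\q = \J$. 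If instead $(x_1,\ldots,x_{d-1}) \subseteq \q$, I would reach a contradiction: the shape of $B''$ in \eqref{idealB''matrix} shows $\ell_1,\ldots,\ell_{n-2}\in (x_1,\ldots,x_{d-1})$ while $\ell_{n-1}\equiv x_dT_n \pmod{(x_1,\ldots,x_{d-1})}$, so $\q$ would contain $(x_1,\ldots,x_{d-1}) + I_{d-1}(B')$, which by \Cref{B'hgt} is a prime of height $n-1$, together with $x_dT_n$. Since $x_dT_n$ does not lie in this prime (a degree count, as $I_{d-1}(B')$ is generated in $T$-degree $d-1 \geq 2$ and $x_dT_n$ has $x_d$-degree one), $\q$ would have height at least $n$, contradicting $\hgt \q = n-1$.

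With $\J$ established as the unique minimal prime of the unmixed ideal $A$, it follows that $A$ is $\J$-primary. To upgrade this to $A = \J$, I localize at $\J$: because $\J \cap R = 0$ we have $(x_1,\ldots,x_{d-1}) \nsubseteq \J$, so \Cref{Jasat} gives $A_\J = \L_\J = \J_\J$, and a $\J$-primary ideal that agrees with $\J$ after localization at $\J$ must equal $\J$. This proves $\J \subseteq A$ and completes the chain of equalities; the Cohen--Macaulayness of $\R(I) = R[T_1,\ldots,T_n]/\J = R[T_1,\ldots,T_n]/A$ is then immediate from \Cref{cmhgt}. I expect the main obstacle to be the second case of the dichotomy, namely ruling out a spurious component supported on $V(x_1,\ldots,x_{d-1})$: this is precisely where the extra generator $x_dT_n$ of $\K$ (equivalently the linear form $\ell_{n-1}$) together with the sharp height bound from \Cref{B'hgt} must be combined to produce the contradicting height jump.
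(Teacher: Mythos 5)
Your proposal is correct and follows essentially the same route as the paper: both squeeze $\J$ between $\L+I_{d-1}(B')$ and the saturation of \Cref{Jasat}, use the Cohen--Macaulayness (hence unmixedness) of $\L+I_{d-1}(B')$ from \Cref{cmhgt}, and exclude components containing $(x_1,\ldots,x_{d-1})$ via the height and primality of $I_{d-1}(B')$ from \Cref{B'hgt} together with the degree argument showing $T_n \notin I_{d-1}(B')$. The only difference is bookkeeping: you conclude by identifying $\J$ as the unique minimal prime, so that $\L+I_{d-1}(B')$ is $\J$-primary and contracts from the localization at $\J$, whereas the paper verifies the equality locally at the associated primes using the linear-type locus (\Cref{LTlocus} and \Cref{minFitt}).
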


\begin{proof}
Notice that $\L+I_{d-1}(B') \subseteq \L:(x_1,\ldots,x_{d-1}) \subseteq \J$, where the first containment follows from Cramer's rule and the second from \Cref{Jasat}. Hence we need only prove that $\J = \L+I_{d-1}(B')$ and we note that it is enough to show that $\J_\p = (\L+I_{d-1}(B'))_\p$ for every associated prime of $\L+I_{d-1}(B')$. In fact, it suffices to show that any such associated prime does not contain $(x_1,\ldots,x_{d-1})$, as the thesis then follows from \Cref{LTlocus} and \Cref{minFitt}.

Let $\p$ be an associated prime of $\L+I_{d-1}(B')$ and note that $\p$ is actually a minimal prime of $\L+I_{d-1}(B')$ with $\hgt \p = n-1$ by \Cref{cmhgt}. It suffices to show that $(x_1,\ldots,x_{d-1}) + \p$ has height at least $n$, as then $\p$ does not contain $(x_1,\ldots,x_{d-1})$. To this end, notice that $(x_1,\ldots,x_{d-1},x_d T_n) = (x_1,\ldots,x_{d-1}) + \L$, hence $(x_1,\ldots,x_{d-1},x_d T_n) + I_{d}(B'') \subseteq (x_1,\ldots,x_{d-1}) + \p$ and so it is enough to show that $\hgt ((x_1,\ldots,x_{d-1},x_d T_n) + I_{d}(B'')) \geq n$. Recall that $I_{d}(B'')= I_{d-1}(B')$ is a prime ideal of $k[T_1,\ldots,T_n]$ with $\hgt I_d(B'')=n-d$ by \Cref{B'hgt}, hence it follows that $\hgt ((x_1,\ldots,x_d ) + I_{d}(B'')) \geq n$. Similarly, we have that $\hgt (I_{d}(B''),T_n) \geq n-d+1$ as $ I_{d}(B'')$ is prime and $T_n \notin I_{d}(B'')$, by degree considerations. As $(I_{d}(B''),T_n)$ is an ideal of $k[T_1,\ldots,T_n]$, it follows that $\hgt ((x_1,\ldots,x_{d-1}, T_n) + I_{d}(B'')) \geq n$ as well. 
\end{proof}

From \Cref{defideal} it follows that the defining equations of $\R(I)$ consist of the generators of $\L$ and non-linear equations belonging to $k[T_1,\ldots,T_n]$. With this, $I$ is said to be an ideal of \textit{fiber type}, as the latter equations generate the ideal defining the special fiber ring. Indeed, we have the following corollary.

\begin{cor}\label{F(I)defideal}
With the assumptions of \Cref{idealsetting}, the special fiber ring is $\F(I) \cong k[T_1,\ldots,T_n]/I_{d-1}(B')$. In particular, $\F(I)$ is a Cohen-Macaulay domain of dimension $d$, i.e. $I$ has maximal analytic spread $\ell(I)=d$.
\end{cor}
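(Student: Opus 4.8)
The plan is to deduce \Cref{F(I)defideal} directly from \Cref{defideal} by reducing the defining ideal of $\R(I)$ modulo the maximal ideal $\m = (x_1,\ldots,x_d)$ of $R$. Since $\F(I) \cong \R(I)/\m\R(I) \cong R[T_1,\ldots,T_n]/(\J + \m R[T_1,\ldots,T_n])$, the first step is to compute the image of $\J = \L + I_{d-1}(B')$ in the quotient $R[T_1,\ldots,T_n]/\m R[T_1,\ldots,T_n] \cong k[T_1,\ldots,T_n]$. The key observation is that the linear forms $\ell_1,\ldots,\ell_{n-1}$ generating $\L$ lie in $\m R[T_1,\ldots,T_n]$, since $[\ell_1 \ldots \ell_{n-1}] = [T_1 \ldots T_n]\cdot \varphi$ and all entries of $\varphi$ are linear forms in the $x_i$, hence in $\m$. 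Therefore $\L$ maps to zero in $k[T_1,\ldots,T_n]$. On the other hand, the entries of $B'$ (and hence the generators of $I_{d-1}(B')$) already belong to the subring $k[T_1,\ldots,T_n]$, as noted following (\ref{JD}) and guaranteed by the uniqueness of the Jacobian dual for linear $\varphi$. Consequently the image of $\J$ in $k[T_1,\ldots,T_n]$ is exactly $I_{d-1}(B')$, giving the isomorphism $\F(I) \cong k[T_1,\ldots,T_n]/I_{d-1}(B')$.

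From this presentation, all the remaining claims follow from \Cref{B'hgt}. That proposition already states that $I_{d-1}(B')$ is a Cohen-Macaulay prime ideal of height $n-d$ in $k[T_1,\ldots,T_n]$. Being prime immediately gives that $\F(I)$ is a domain, and being Cohen-Macaulay passes to the quotient, so $\F(I)$ is a Cohen-Macaulay domain. For the dimension, I would compute $\dim \F(I) = \dim k[T_1,\ldots,T_n] - \hgt I_{d-1}(B') = n - (n-d) = d$, using that $k[T_1,\ldots,T_n]$ is a polynomial ring in which height and codimension agree. Finally, since the analytic spread $\ell(I)$ is by definition the Krull dimension of $\F(I)$, this yields $\ell(I) = d$, and since $\ell(I) \leq \dim R = d$ always holds for a proper ideal, this is the maximal possible analytic spread, justifying the phrase ``maximal analytic spread.''

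The argument is essentially a bookkeeping reduction, so there is no serious obstacle; the only point requiring care is confirming that reducing $\J$ modulo $\m$ genuinely recovers $I_{d-1}(B')$ and not some larger ideal. This hinges on the fact that the generators of $I_{d-1}(B')$ involve only the $T_i$ variables, so that no cancellation or collapse occurs when we set the $x_i$ to zero, and that $\L$ contributes nothing since it is entirely absorbed into $\m R[T_1,\ldots,T_n]$. Both facts are already established in the setup preceding \Cref{B'hgt}, so once the reduction is carried out cleanly the corollary is immediate. I would present the proof in two or three lines, citing \Cref{defideal} for the presentation of $\J$ and \Cref{B'hgt} for the ring-theoretic properties of $I_{d-1}(B')$.
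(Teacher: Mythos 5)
Your proof is correct and follows essentially the same route as the paper's: cite \Cref{defideal} for $\J = \L + I_{d-1}(B')$, observe that $\L \subseteq \m R[T_1,\ldots,T_n]$ while the entries of $B'$ lie in $k[T_1,\ldots,T_n]$, conclude $\F(I) \cong k[T_1,\ldots,T_n]/I_{d-1}(B')$, and then read off the Cohen--Macaulay, domain, and dimension claims from \Cref{B'hgt}. The paper's proof is just a more compressed version of exactly this argument.
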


\begin{proof} 
Recall that the entries of $B'$ belong to $k[T_1, \ldots, T_n]$. From \Cref{defideal}, we have that $\J=\L+I_{d-1}(B')$ and so $\F(I) \cong k[T_1,\ldots,T_n]/I_{d-1}(B')$, as $\L\subseteq \m$. Recall from \Cref{B'hgt} that $I_{d-1}(B')$ is a Cohen-Macaulay prime ideal of $k[T_1,\ldots,T_n]$ with $\hgt I_{d-1}(B') = n-d$. Hence it follows that $\F(I)$ is a Cohen-Macaulay domain with $\dim \F(I) = d$.
\end{proof}

We remark that, in the case when $d=3$, the content of \Cref{F(I)defideal} was already proved in \cite[2.4~and~3.6]{DRS18} (or follows from \cite[4.3~and~4.6]{Nguyen17} with the same proof as above). Likewise, in the case that $n=d+1$, the result follows from \cite[5.2~and~5.3]{Nguyen14}.

We end this section by discussing possible directions to expand upon the results presented here, namely situations where assumptions (ii) or (iii) in \Cref{idealsetting} are not satisfied. In line with other examples provided in \cite{Nguyen17} and \cite{DRS18}, the examples presented here illustrate that similar behaviors of the ideal $I$ and its presentation matrix $\varphi$ should not be expected if these assumptions are weakened.

\subsection{Questions and possible generalizations} \label{Gssubsection}

A key ingredient in the proof of \Cref{defideal} is the fact that $\fitt_{d-1}(I)=I_{n-d+1}(\varphi)$ has a unique minimal prime $\p=(x_1 \ldots, x_{d-1})$ due to \Cref{minFitt}, whose proof relies on assumptions (ii) and (iii) in \Cref{idealsetting}. It is then natural to investigate whether a similar behavior persists if either of these assumptions is weakened.

One could ask whether the defining ideal of $\R(I)$ could still be determined by relaxing the rank condition in assumption (ii) in \Cref{idealsetting} while maintaining the other conditions. Note that this is only possible if $\mu(I) >d+1$, by \Cref{d+epres} and \cite[3.3]{Nguyen14}. A first observation is that if one completely removes condition (ii) from \Cref{idealsetting}, we are no longer guaranteed the result of \Cref{minFitt}. That is, we are no longer guaranteed that $I_{n-d+1}(\varphi)$ has a unique minimal prime ideal. When $d=3$, this follows from \cite[2.5]{DRS18}, but we provide an explicit example of such behavior, for the sake of completeness.

\begin{ex} \label{noMinPrimeExample}
Consider the following $5\times 4$ matrix with linear entries in $\mathbb{Q}[x_1,x_2,x_3]$:

$$\varphi=\left(\!\begin{array}{cccc}
x_{1}-x_{2}&x_{2}+x_{3}&0&x_{1}-x_{2}\\
0&0&x_{2}&0\\
x_{2}&x_{3}&x_{2}&0\\
x_{3}&0&x_{2}+x_{3}&x_{2}\\
x_{2}+x_{3}&0&x_{3}&x_{1}
\end{array}\!\right)$$
\medskip

Computations in \textit{Macaulay2} \cite{Macaulay2} show that the ideal $I=I_4(\varphi)$ is perfect of height two and satisfies $G_2$ but not $G_3$. Additionally, $I_{1}(\varphi) = (x_{1},x_{2},x_{3})$. However, $I_{3}(\varphi)$ has two minimal prime ideals, $(x_1,x_2)$ and $(x_2,x_3)$. Hence, \Cref{minFitt} cannot be satisfied, so assumption (ii) cannot hold. Indeed, a computation in \textit{Macaulay2} \cite{Macaulay2} shows that the rank of $\varphi$ modulo these primes is 2. 
\end{ex}

When $d=3$, the scenario that $\varphi$ has higher rank has been explored in \cite{DRS18}, via the notion of the \emph{chaos invariant} $u (\varphi)$ of $\varphi$ \cite[2.1]{DRS18}. This integer $u (\varphi)$ is equal to 1 when assumption (ii) holds. Moreover, in \cite[2.5]{DRS18} it was shown that certain Fitting ideals, in a range controlled by the chaos invariant, share a common unique minimal prime.  It is interesting to ask whether a similar behavior to \cite[2.5]{DRS18} occurs in more general settings, perhaps by generalizing the notion of the chaos invariant to arbitrary dimension $d$, and whether this may lead to determining the defining ideal of $\R(I)$ (see also \cite[3.3]{DRS18}).

It is also worth noting that assumption (ii) in \Cref{idealsetting} is sufficient, but not necessary, for $I_{n-d+1}(\varphi)$ to have a unique minimal prime ideal; see \Cref{Lanexample} below. It is then reasonable to ask whether we can recover some of the results of the present work if one replaces condition (ii) with the more general requirement that $I_{n-d+1}(\varphi)$ has a unique minimal prime. The following example suggests that, when doing so, the defining ideal of $\R(I)$ might be very different from the description in \Cref{defideal}, even in the three variable setting.

\begin{ex}[{\cite[4.7]{Nguyen17}}] \label{Lanexample}
Consider the following $5\times 4$ matrix with linear entries in $\mathbb{Q}[x_1,x_2,x_3]$:
$$\varphi=\left(\!\begin{array}{cccccc}
x_1&x_2&x_1&x_3\\
x_2&x_1&x_1-x_3&x_1\\
0&x_1&x_1-x_2&x_1\\
0&x_1&x_1&x_2\\
0&x_2&x_1&x_1
\end{array}\!\right)$$
\medskip

The ideal $I=I_4(\varphi)$ is perfect of height two and satisfies $G_2$ but not $G_3$. The ideal $I_{3}(\varphi)$ has a unique minimal prime ideal, $(x_1,x_2)$, and the rank of $\varphi$ modulo $(x_1,x_2)$ is 2. A computation through \textit{Macaulay2} \cite{Macaulay2} shows that the defining ideal $\J$ fails to be as in \Cref{defideal}, i.e. $\J\neq \L+I_{d-1}(B')$ as noted in \cite[4.7]{Nguyen17}. Moreover, we have that $\J \neq \L:(x_1,x_2)$, but rather that $\J=\L:(x_1,x_2)^2$. 
\end{ex}

In light of \Cref{Lanexample}, one may wonder whether the defining ideal has a similar description in terms of the rank $r$ of $\varphi$ modulo the unique minimal prime of $I_{n-d+1}(\varphi)$, provided such a unique minimal prime exists, even when $r \ge 2$. Notice that when $d=3$, this rank $r$ would coincide with the chaos invariant $u(\varphi)$ by \cite[2.2]{DRS18}. This has led to the following question.

\begin{quest}\label{biggerrankquest}
Let $I\subset k[x_1, \ldots, x_d]$ be a perfect ideal of height two with a linear presentation matrix $\varphi$ and suppose that $I$ satisfies $G_{d-1}$ but not $G_d$. If $I_{n-d+1}(\varphi)$ has a unique minimal prime $\p$ and if the presentation matrix $\varphi$ has rank $r \geq 2$ modulo $\p$, is the defining ideal of $\R(I)$ given by $\J= \L:\p^{r}$?
\end{quest}

Even if the question has an affirmative answer, the theory of residual intersections alone might not be sufficient to produce a generating set of $\J$ if $r \geq 2$. However, similar descriptions of the defining ideal were encountered in \cite{BM16,Weaver1,Weaver2}, where an iterative method was then employed to produce the equations of $\J$. It is interesting to ask if similar algorithmic techniques can be used to produce the equations of $\J$ if \Cref{biggerrankquest} has a positive answer.

We now discuss the case of weakening assumption (iii) in \Cref{idealsetting} and consider ideals satisfying $G_s$ but not $G_{s+1}$ for some $s < d-1$. The following examples, calculated using \textit{Macaulay2} \cite{Macaulay2}, show that these ideals exhibit different behavior in general, even when the remaining conditions in \Cref{idealsetting} are maintained.

\begin{ex}\label{Gs-ex1}
  Consider the following $9 \times 8$ matrix with linear entries in $\mathbb{Q}[x_1, \ldots, x_8]$: 
 $$\varphi= \left(\!\begin{array}{cccccccc}
0&0&0&0&0&0&0&x_{3}\\
0&x_{1}&0&x_{3}&0&0&0&0\\
0&0&0&0&x_{3}&0&x_{5}&x_{4}\\
0&0&x_{6}&0&x_{4}&0&0&0\\
0&x_{2}&0&x_{3}&0&0&0&x_{7}\\
x_{6}&0&x_{7}&x_{5}&0&x_{2}&x_{3}&0\\
x_{2}&x_{6}&x_{3}&x_{4}&0&0&x_{6}&0\\
x_{5}&0&0&x_{6}&x_{6}&0&x_{5}&x_{2}\\
x_{5}&0&x_{3}&x_{1}&x_{4}&x_{4}&x_{5}&x_{8}
\end{array}\!\right)$$

\medskip 

Notice that $I_{1}(\varphi) = \left(x_{1},\ldots,x_{8}\right)$ and that $\varphi$ has rank 1 modulo $(x_{1},\ldots,x_{7})$. Moreover, $I = I_{8}(\varphi)$ is a perfect ideal of height two satisfying $G_{6}$ but not $G_{7}$. However, $\fitt_{6}(I) = I_{3}(\varphi)$ has two minimal primes of height $6$, namely $\left(x_{2},x_{3},x_{4},x_{5},x_{6},x_{7}\right)$ and $\left(x_{1},x_{2},x_{3},x_{4},x_{5},x_{6}\right)$. 
\end{ex}

What is even more surprising is that, if $I$ satisfies $G_s$ but not $G_{s+1}$ for some $s<d-1$, there may be multiple minimal primes of $I_{n-s}(\varphi)$ of different heights, even in the case $n=d+1$. 

\begin{ex}\label{Gs-ex2}
Consider the following $7 \times 6$ matrix with linear entries in $\mathbb{Q}\left[x_{1},\ldots,x_{6}\right]$:
 $$\varphi = \left(\!\begin{array}{cccccc}
0&x_{4}&x_{3}-x_{4}&0&x_{6}&0\\
0&x_{1}-x_{2}&0&0&x_{4}+x_{5}&x_{3}\\
0&0&x_{1}-x_{2}&0&0&x_{3}\\
x_{3}-x_{4}&x_{2}&0&x_{1}&x_{6}&x_{4}\\
0&x_{5}&0&0&x_{4}+x_{5}&0\\
x_{2}+x_{3}&x_{4}&0&x_{1}&0&x_{4}\\
0&x_{5}&0&x_{4}&x_{4}&0
\end{array}\!\right)$$

\medskip

The ideal $I = I_{6}(\varphi)$ is perfect of height two and satisfies $G_{4}$ but not $G_{5}$. Moreover, $I_{1}(\varphi) = \left(x_{1},\ldots,x_{6}\right)$ and $\varphi$ has rank 1 modulo $(x_{1}, x_{2},x_{3},x_{4},x_{5})$. However, the ideal $\fitt_4(I)=I_{3}(\varphi)$ has two minimal primes of different heights, namely $(x_{1},x_{2},x_{3},x_{4})$ and $(x_{1}-x_{2},x_{3},x_{4},x_{5},x_{6})$.
\end{ex}

Despite these examples, computational experiments on \textit{Macaulay2} \cite{Macaulay2} show that, for a linearly presented height-two perfect ideal $I$ satisfying $G_s$ but not $G_{s+1}$, $\fitt_s(I)$ frequently does have a unique minimal prime of height $s$. In both \Cref{Gs-ex1} and \Cref{Gs-ex2}, the matrix $\varphi$ satisfies the rank condition, assumption (ii) of \Cref{idealsetting}, hence a natural question arises.

\begin{quest}
    Let $I\subset k[x_1, \ldots, x_d]$ be a perfect ideal of height two with a linear presentation matrix $\varphi$ and suppose that $I$ satisfies $G_s$ but not $G_{s+1}$ for some $s <d-1$. Can one identify a sufficient condition on the matrix $\varphi$, which guarantees that $\fitt_s(I) = I_{n-s}(\varphi)$ has a unique minimal prime of height $s$?
\end{quest}

A positive answer could potentially lead to a formulation of the defining ideal of $\R(I)$ for $I$ above, so long as one can describe the non-linear type locus of $I$, as in \Cref{LTlocus}.

\section{Linearly presented modules of projective dimension one}\label{modulesection}
 
In this section we aim to extend the main result of the previous section, \Cref{defideal}, to Rees algebras of modules with projective dimension one. For such a module $E$, we determine the defining ideal $\J$ of its Rees algebra $\R(E)$ with the assumption that $E$ satisfies $G_{d-1}$ but not $G_d$. More precisely, we consider the following setting.

\begin{set}\label{modulesetting}
    Let $R=k[x_1,\ldots,x_d]$ be a standard-graded polynomial ring over a field $k$, with $\m = (x_1,\ldots,x_d)$ and $d\geq 3$. Let $E$ be a finitely generated $R$-module of projective dimension one, rank $e>0$, and $\mu(E)=n\geq d+e$, satisfying the following assumptions:  

    \begin{itemize}
        \item[(i)] The module $E$ has a minimal presentation matrix $\varphi$ consisting of linear entries with $I_1(\varphi) = \m$.
        \item[(ii)] After possibly a change of coordinates, the matrix $\varphi$ has rank 1 modulo an ideal generated by $d-1$ variables.
        \item[(iii)] The module $E$ satisfies $G_{d-1}$ but does not satisfy $G_d$. 
    \end{itemize}
\end{set}

\begin{rem}\label{E is torsion-free}
For similar reasons to those in \Cref{datleast3}, it is assumed that $d\geq 3$ in \Cref{modulesetting}. Moreover, as $d\geq 3$, from assumption (iii) above it follows that $E$ satisfies $G_2$ and is hence free locally in codimension one, thus $E$ is torsion-free. Hence, by \Cref{existBourbaki} we are guaranteed that a generic Bourbaki ideal $I$ exists. Notice also that if $e=1$, then $E$ is isomorphic to a perfect ideal of height $2$ and \Cref{idealsetting} is recovered. 

Furthermore, it assumed that $\mu(E) \geq d+e$ to avoid contradicting the remaining assumptions of \Cref{modulesetting}, as similarly noted in \Cref{datleast3} in the case of ideals. Additionally, if $\mu(E) \leq d+e-2$ and $E$ satisfies $G_{d-1}$, then $E$ is $G_{\infty}$ and hence of linear type by \cite[Prop.~3~and~4]{Avramov81}. If  $\mu(E) = d+e-1$, then since $E$ satisfies $G_{d-1}$ but not $G_d$ we must have $\hgt I_{1}(\varphi) =d-1$ by \Cref{hgtlfittlemma}, hence $E$ is a module over a polynomial subring $R' \subset R$ in $d-1$ variables. Thus, the defining ideal of $\R(E)$ is known by \cite[4.11]{SUV03}.
\end{rem}

\begin{rem}\label{modulepres}
Just as in \Cref{idealpres}, assumption (ii) in \Cref{modulesetting} ensures that, after an appropriate change of coordinates, we are able to assume that $\varphi$ has rank one modulo the ideal $(x_1,\ldots,x_{d-1})$. Then, after potential row and column operations, the presentation matrix $\varphi$ is of the form 
\begin{equation}\label{modulephiform}
 \varphi =  \begin{pmatrix}
   & & & *\\
   & \varphi'& & \vdots\\
   & & & *\\
    *&\cdots& *& x_d
\end{pmatrix}  
\end{equation}
where the entries of $\varphi'$ and the ``$*$" entries are linear and contained in the subring $k[x_1,\ldots,x_{d-1}]$. 
\end{rem}

Similarly as for ideals in the previous section, the form of the presentation matrix in (\ref{modulephiform}) obtained from assumption (ii) in \Cref{modulesetting} will be essential in order to describe the defining ideal of the Rees algebra $\R(E)$. We note however, that this condition is implied from the remaining assumptions when $\mu(E) = d+e$. A similar observation was made for perfect ideals of grade two in \cite[3.3]{Nguyen14}, however its proof relies on the additional assumption that $k$ is algebraically closed. We present a proof here for modules of projective dimension 1, without this assumption, in our current setting. As this argument applies to perfect ideals of grade two in the case that $e=1$, it follows that the assumption that $k$ is algebraically closed is superfluous in the settings of \cite{Nguyen14,Nguyen17}.

\begin{prop}\label{d+epres}
Assumption (ii) in \Cref{modulesetting} is automatically satisfied under the remaining assumptions if $\mu(E) = d+e$.
\end{prop}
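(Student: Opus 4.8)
The plan is to translate assumption (ii) into the existence of a single $k$-rational direction along which $\varphi$ has rank one, and to produce such a direction by first working over the algebraic closure $\bar k$ and then descending by a Galois argument. Write $n=\mu(E)=d+e$, so that $\varphi$ is a $(d+e)\times d$ matrix of linear forms of generic rank $d$; since $I_1(\varphi)=\m$, its entries have no common nonzero zero, whence $\rank \varphi(v)\geq 1$ for every $0\neq v\in\bar k^{\,d}$ (here $\varphi(v)$ denotes the scalar matrix obtained by evaluating $\varphi$ at $v$). The key reformulation is that (ii) holds over $k$ precisely when there is a $k$-line $L\subseteq\spec R$ through the origin with $\rank(\varphi\bmod I(L))=1$; equivalently, a nonzero $k$-point of $V(I_2(\varphi))$, along which $\varphi$ then has rank \emph{exactly} one. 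After a $k$-change of coordinates carrying $L$ to the $x_d$-axis, this is exactly the shape \eqref{modulephiform}.

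First I would pin down the relevant Fitting heights. By \Cref{hgtlfittlemma}, the hypotheses $G_{d-1}$ but not $G_d$ together with $n=d+e$ give $\hgt I_2(\varphi)=\hgt I_3(\varphi)=d-1$, since $I_2(\varphi)=\fitt_{d+e-2}(E)$ and $I_3(\varphi)=\fitt_{d+e-3}(E)$. It is here that the hypothesis $\mu(E)=d+e$ is essential: it forces the non-$G_d$ locus to coincide with the rank-one locus $V(I_2(\varphi))$, so that the relevant minors have size two. In particular $\hgt I_2(\varphi)=d-1<d$, so over $\bar k$ the locus $V(I_2(\varphi)\otimes_k\bar k)$ contains a nonzero point, producing a rank-one direction over $\bar k$; after a $\bar k$-change of coordinates I may assume $\varphi\otimes\bar k$ has the form \eqref{modulephiform}, with the submatrix $\hat\varphi$ of its first $d-1$ columns having all entries in $\bar k[x_1,\dots,x_{d-1}]$.

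Next I would run the argument of \Cref{minFitt} for the matrix $\varphi\otimes\bar k$. Laplace expansion along the distinguished last column gives $I_3(\varphi)\subseteq I_2(\hat\varphi)$, while every entry of $\hat\varphi$ — indeed every $2$-minor of $\varphi$, since at most one factor of a $2\times 2$ minor can be the lone $x_d$ entry — lies in $(x_1,\dots,x_{d-1})$; thus $I_3(\varphi)\subseteq I_2(\hat\varphi)\subseteq I_2(\varphi)\subseteq (x_1,\dots,x_{d-1})$. As $\hgt I_3(\varphi)=d-1$ forces $\hgt I_2(\hat\varphi)=d-1$ and $I_2(\hat\varphi)$ lives in the $(d-1)$-dimensional ring $\bar k[x_1,\dots,x_{d-1}]$, it has radical $(x_1,\dots,x_{d-1})$; the sandwich above then yields $\sqrt{I_2(\varphi)\otimes\bar k}=(x_1,\dots,x_{d-1})$. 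Hence the geometric rank-one locus is a single reduced line.

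Finally I would descend. The reduced locus $V(I_2(\varphi))_{\mathrm{red}}$ becomes, over $\bar k$, a single line through the origin, which is stable under $\mathrm{Gal}(\bar k/k)$ and therefore, by Galois descent of subspaces of $k^d\otimes_k\bar k$, is defined over $k$. Consequently $\sqrt{I_2(\varphi)}$ is generated by $d-1$ $k$-linear forms; a $k$-change of coordinates brings it to $(x_1,\dots,x_{d-1})$, and along the resulting $x_d$-axis $\varphi$ has rank exactly one, which is assumption (ii). The main obstacle, and precisely the step that replaces the algebraically closed hypothesis of \cite{Nguyen14,Nguyen17}, is the passage from a merely geometric rank-one direction to a $k$-rational one; what makes it work is that $\mu(E)=d+e$ renders the rank-one locus geometrically irreducible and linear — a single reduced line over $\bar k$ — so that Galois descent applies. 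Without this hypothesis the relevant minors are larger and the locus may acquire several minimal primes, as \Cref{noMinPrimeExample} shows, so the argument genuinely uses $\mu(E)=d+e$.
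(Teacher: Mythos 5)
Your argument runs parallel to the paper's \Cref{minFitt} up to its final step, and that part is sound: the height computation $\hgt I_2(\varphi)=\hgt I_3(\varphi)=d-1$ via \Cref{hgtlfittlemma}, the existence of a rank-one direction over $\bar{k}$, and the sandwich $I_3(\varphi)\subseteq I_2(\hat{\varphi})\subseteq I_2(\varphi)\subseteq (x_1,\ldots,x_{d-1})$ showing that $V(I_2(\varphi)\otimes_k\bar{k})$ is set-theoretically a single line are all correct. The gap is the descent. Galois descent of a $\mathrm{Gal}$-stable subspace of $\bar{k}^{\,d}$ down to $k$ is valid only when $k$ is perfect: for non-perfect $k$ the extension $\bar{k}/k$ is not Galois, and the fixed field of $\mathrm{Aut}(\bar{k}/k)$ is the perfect closure $k^{1/p^\infty}$, not $k$. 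Concretely, over $k=\mathbb{F}_p(t)$ the line $V(x_1-t^{1/p}x_2)\subseteq \bar{k}^{\,2}$ is stable under every automorphism of $\bar{k}$ over $k$ (its ideal is the radical of the $k$-ideal $(x_1^p-tx_2^p)$), yet it is not defined over $k$. So as written you have proved the proposition only for perfect fields. This restriction is not harmless here: the entire point of this proposition, stated explicitly in the paper, is to hold over an \emph{arbitrary} field, because in \Cref{d+eVersion} the result must be applied over the field $k(Z)$ arising from the generic Bourbaki construction, which is never algebraically closed and, in characteristic $p>0$, is not even perfect (even when $k$ itself is $\overline{\mathbb{F}}_p$). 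Note also that the natural patch of replacing $\bar{k}$ by the separable closure $k^{\mathrm{sep}}$ (where descent would be legitimate) breaks the earlier step instead: a line defined only over a purely inseparable extension has no nonzero $k^{\mathrm{sep}}$-rational points, so the rank-one direction you need may simply not exist there.

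The paper's proof never leaves $k$, and this is exactly where the two arguments diverge. It takes a prime $\p\in\spec(R)$ of height $d-1$ witnessing the failure of $G_d$, uses $I_1(\varphi)=\m$ to show $\mu(E_\p)=d+e-1$, so that $\p$ is a minimal prime of $I_2(\varphi)$; after row and column operations isolating an entry $\alpha\notin\p$, it gets $I_3(\varphi)\subseteq I_1(\varphi')\subseteq\p$, forcing $\hgt I_1(\varphi')=d-1$. The $k$-rationality then comes for free from linearity: $I_1(\varphi')$ is generated by linear forms of $R$, and an ideal of height $d-1$ generated by linear forms is generated by $d-1$ linearly independent $k$-linear forms, hence is a linear prime which must equal $\p$. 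That elementary linear-algebra-over-$k$ step is precisely what your Galois descent is trying to substitute for, and it is where the ``linearly presented'' hypothesis does work that perfectness of $k$ cannot. To salvage your geometric argument you would need to show, as the paper does, that the ideal of your line is generated by entries of the matrix itself --- i.e., by $k$-linear forms --- rather than appealing to Galois stability.
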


\begin{proof}
Since $E$ satisfies the condition $G_{d-1}$ but not $G_d$ and $\mu(E) =d+e$, from \Cref{hgtlfittlemma} it follows that $\hgt I_3(\varphi) = \hgt I_2(\varphi) = d-1$. Moreover, since $E$ satisfies $G_{d-1}$ and not $G_d$, we have that $\mu(E_\q) \leq \dim R_\q+e-1$ for all primes $\q$ with $1 \leq \dim R_\q \leq d-2$, and also that there exists a prime $\p$ with $\hgt \p = d-1$ such that $\mu(E_\p)\geq \dim R_\p+e = d+e-1$. Let 
\begin{equation}\label{resofE}
 0\longrightarrow  R^d \overset{\varphi}{\longrightarrow} R^{d+e} \longrightarrow E \longrightarrow 0
\end{equation}
denote the minimal presentation of $E$ and let $\varphi_\p$ denote the image of $\varphi$ in (\ref{resofE}) after localizing at $\p$. Since $I_1(\varphi)= \m$, there is a nonzero entry of $\varphi$ not contained in $\p$. Hence $\varphi_\p$ has a unit entry and so it follows from \Cref{fitt} that $\mu(E_\p) \leq d+e-1$, hence $\mu(E_\p)=d+e-1$. Additionally from \Cref{fitt}, we have that $\{\q\in \spec(R)\,|\,\mu(E_\q)=d+e-1\} = V(I_2(\varphi))\setminus V(I_1(\varphi))$. As $I_1(\varphi) = \m$ and $\hgt I_2(\varphi) = d-1$, this set is exactly the set of minimal primes of $I_2(\varphi)$, hence $\p$ is a minimal prime of $I_2(\varphi)$. 

As previously noted, there is an entry of $\varphi$ not contained in $\p$. Thus, after potential row and column operations, we may assume that 
\begin{equation}\label{phiwhend+e}
 \varphi =  \begin{pmatrix}
   & & & *\\
   & \varphi'& & \vdots\\
   & & & *\\
    *&\cdots& *& \alpha
\end{pmatrix}  
\end{equation}
where $\alpha \notin \p$ and the entries of $\varphi'$ and the ``$*$" entries are linear forms in $R$. Now since $\mu(E_\p) = d+e-1$, it follows that $\varphi'_\p$ has no unit entry by \Cref{fitt}, and so $I_1(\varphi') \subseteq \p$. As $I_3(\varphi) \subseteq I_1(\varphi') \subseteq \p$ and $\hgt I_3(\varphi) = d-1$, it follows that $I_1(\varphi')$ is an ideal of height $d-1$ generated by linear forms. Thus, after a possible change of coordinates, we may assume that $\p = I_1(\varphi') = (x_1,\ldots,x_{d-1})$ and $\alpha =x_d$. Letting $\overline{\varphi}$ denote the image of $\varphi$ modulo $\p$, we have $$d+e-1 = \mu(E_\p) = {\rm dim}_{k(\p)} (E/\p E)_\p = d+e - \rank \overline{\varphi},$$ where $k(\p) = R_\p/\p R_\p$, the residue field of $R_\p$. Thus it follows that $\rank \overline{\varphi} =1$, as claimed.
\end{proof}

With the form of the presentation matrix $\varphi$ given in \Cref{modulepres}, the Jacobian dual of $\varphi$ has the following form 
\begin{equation}\label{moduleJD}
 B(\varphi) =  \begin{pmatrix}
   & & & \bullet \\
   & B' & & \vdots\\
   & & & \bullet \\
    0&\cdots&0 & T_n
\end{pmatrix}
\end{equation} 
where the entries of $B'$ and the ``$\bullet$" entries are linear forms in $k[T_1,\ldots,T_n]$. Once again, notice that $B' = B(\varphi')$ is the Jacobian dual of $\varphi'$ with respect to $x_1,\ldots,x_{d-1}$, i.e. $[T_1 \ldots T_n] \cdot \varphi' = [x_1 \ldots x_{d-1}]\cdot B'$.

As before, let $\J$ denote the defining ideal of $\R(E)$. Recall from \Cref{prelimsection} that $\L= ([T_1,\ldots,T_n]\cdot \varphi)$ is the defining ideal of the symmetric algebra $\S(E)$.

\begin{cor}\label{modulecontainment}
We have the containment of ideals $\L + I_{d-1}(B') \subseteq \J$.    
\end{cor}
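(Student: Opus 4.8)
The plan is to prove the two containments $\L \subseteq \J$ and $I_{d-1}(B') \subseteq \J$ separately. The first is immediate: by the general construction of blowup algebras recalled in \Cref{prelimsection}, the defining ideal $\L$ of the symmetric algebra $\S(E)$ is always contained in the defining ideal $\J$ of the Rees algebra $\R(E)$, since $\R(E)$ is a quotient of $\S(E)$ (it is $\S(E)$ modulo its $R$-torsion). Thus $\L \subseteq \J$ holds with no further work.

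For the containment $I_{d-1}(B') \subseteq \J$, I would argue via Cramer's rule, exactly as in the ideal case treated in \Cref{defideal} and \Cref{cmhgt}. Recall from (\ref{moduleJD}) that $B' = B(\varphi')$ is the Jacobian dual of $\varphi'$ with respect to $x_1,\ldots,x_{d-1}$, so that $[\ell_1 \ldots \ell_{n-e}] = [x_1 \ldots x_{d-1}\,\,x_dT_n]\cdot B''$ for the auxiliary matrix $B''$ obtained from $B(\varphi)$ by dividing its last row by $T_n$ (as in (\ref{idealB''matrix})), where the $\ell_i$ are the generators of $\L$. Since the $\ell_i$ all lie in $\L \subseteq \J$ and $\J$ is a prime ideal (being the defining ideal of the domain $\R(E) = R[T_1,\ldots,T_n]/\J$, which is a domain because $E$ is torsion-free by \Cref{E is torsion-free}), Cramer's rule shows that every maximal minor $\delta$ of the coefficient matrix $[x_1 \ldots x_{d-1}\,\,x_dT_n]$ against $B''$ satisfies $\delta \cdot (x_1,\ldots,x_{d-1},x_dT_n) \subseteq \L \subseteq \J$; more precisely, the maximal minors of $B''$ times the generators of the ideal $\K = (x_1,\ldots,x_{d-1},x_dT_n)$ lie in $\L$. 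Hence $I_d(B'') \subseteq \L : \K$.

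It then remains to pass from $\L:\K$ into $\J$. Since $\K \nsubseteq \J$ (the variable $x_d T_n$ together with $x_1,\ldots,x_{d-1}$ cannot all lie in the prime $\J$, as $\R(E)$ has dimension $d+e > d-1$ and these would cut it down too far), and $\L \subseteq \J$ with $\J$ prime, we get $\L : \K \subseteq \J$. Combining, $I_d(B'') \subseteq \L:\K \subseteq \J$. Finally, the identity $I_{d-1}(B') = I_d(B'')$ noted in the construction of $B''$ gives $I_{d-1}(B') \subseteq \J$, completing the argument.

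The main point to verify carefully is that $\J$ is prime and that $\K \nsubseteq \J$; once these are in hand the rest is a direct transcription of the Cramer's rule argument from the ideal setting. The primeness of $\J$ follows because $E$ is torsion-free of positive rank, so $\R(E)$ is a domain; the fact that $\K \nsubseteq \J$ follows from a dimension count on $\R(E)$. I do not expect any genuine obstacle here, as this corollary is the module-theoretic analogue of the easy containments already established for ideals, and the Jacobian dual machinery transfers verbatim given the shared shape (\ref{moduleJD}) of $B(\varphi)$.
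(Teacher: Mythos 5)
Your proposal is correct in substance and follows essentially the same route as the paper: both arguments consist of Cramer's rule applied to the structured Jacobian dual (\ref{moduleJD}), followed by an appeal to the primeness of $\J$. The paper's proof is simply a compressed variant of yours: it notes that $I_d(B(\varphi)) \subseteq \J$ by Cramer's rule, that the last row $(0,\ldots,0,T_n)$ of $B(\varphi)$ forces $I_d(B(\varphi)) = (T_n)\, I_{d-1}(B')$, and then cancels $T_n$ using that $\J$ is prime and $T_n \notin \J$. You instead divide out $T_n$ at the level of the matrix rather than at the level of the minors, passing through $B''$ and the colon ideal $\L:\K$ exactly as in the ideal-case argument of \Cref{cmhgt}; this is a cosmetic difference, not a different method.

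One justification in your write-up does need repair, namely the claim that $\K = (x_1,\ldots,x_{d-1},x_dT_n) \nsubseteq \J$ ``by a dimension count.'' That count does not work: since $\J$ is prime with $\hgt \J = (d+n)-\dim\R(E) = n-e$, and $\hgt \K = d$ (the generators form a regular sequence), the assumption $n \geq d+e$ gives $n-e \geq d$, so there is no dimensional obstruction whatsoever to $\K \subseteq \J$. The correct --- and immediate --- reason is that $\J \cap R = 0$, because $R$ embeds into the domain $\R(E) = R[T_1,\ldots,T_n]/\J$ as its degree-zero component; hence already $x_1 \notin \J$, and a fortiori $\K \nsubseteq \J$. (The analogous point needed in the paper's variant, $T_n \notin \J$, holds because $T_n \in \J$ would force the generator $a_n$ to vanish in $[\R(E)]_1$, contradicting $\mu(E)=n$.) With this one-line fix, your argument is complete.
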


\begin{proof}
By Cramer's rule, we have that $I_d(B(\varphi)) \subseteq\J$. Moreover, from (\ref{moduleJD}) we also have that $I_d(B(\varphi)) = (T_n)I_{d-1}(B')$. The claim then follows as $\J$ is a prime ideal and $T_n\notin \J$.
\end{proof}

In the remainder of this section, we aim to show that the containment in \Cref{modulecontainment} is actually an equality, for which we employ a generic Bourbaki ideal of $E$.

\subsection{The defining ideal of $\R(E)$}

Our next step is to show that, for a module $E$ as in \Cref{modulesetting}, the structure of the presentation matrix $\varphi$ allows one to select a generic Bourbaki ideal $I$ which satisfies the assumptions of \Cref{defideal}. We emphasize that a generic Bourbaki ideal exists due to \Cref{existBourbaki}, as $E$ is torsion-free and satisfies $G_2$. For the duration of this section, we adopt the following notation. 

\begin{notat}
  Retain the assumptions and notations of \Cref{modulesetting} and assume that $\varphi$ is expressed as in \Cref{modulepres}. Let $I$ denote a generic Bourbaki ideal of $E$ obtained as in \Cref{gbipresentation}, with presentation matrix $\psi$. Denote by $\L_I$ and $\J_I$ the defining ideals of $\S(I)$ and of $\R(I)$, respectively.
\end{notat}

\begin{prop}\label{choosegbi}
    A generic Bourbaki ideal $I$ may be chosen to satisfy the assumptions of \Cref{idealsetting}.
\end{prop}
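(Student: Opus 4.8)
The plan is to show that a generic Bourbaki ideal $I$ of $E$ satisfies all three conditions of \Cref{idealsetting}, namely that $I$ is a perfect height-two $R''$-ideal with $\mu(I)\geq d+1$, that its presentation matrix $\psi$ has linear entries with $I_1(\psi)=\m R''$, that $\psi$ has rank one modulo $(x_1,\ldots,x_{d-1})R''$ after a change of coordinates, and that $I$ satisfies $G_{d-1}$ but not $G_d$. First I would invoke \Cref{existBourbaki}: since $E$ is torsion-free, satisfies $G_2$, and has projective dimension one (hence is orientable), a generic Bourbaki ideal $I$ exists, and because $\mathrm{pd}_R E=1$ forces $\grade I\geq 2$, part (a) gives $\hgt I\geq 2$; the perfectness and height-two property follow since $I\cong E''/F''$ inherits projective dimension one over the polynomial ring $R''$ and the Auslander-Buchsbaum formula pins the height at exactly two. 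The condition $G_{d-1}$-but-not-$G_d$ transfers directly from $E$ to $I$ by \Cref{existBourbaki}(b), and $\mu(I)=n-e+1\geq d+1$ follows from $\mu(E)=n\geq d+e$ by \Cref{gbipresentation}(a).

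The heart of the argument is controlling the presentation matrix $\psi$ of $I$. By \Cref{gbipresentation}(a), after multiplying $\varphi$ by an invertible matrix over $k(Z)$ we may write
$$\varphi=\begin{bmatrix} A \\ \psi \end{bmatrix}$$
where $\psi$ is an $(n-e+1)\times m$ minimal presentation matrix of $I$, and by \Cref{gbipresentation}(b), since $E$ has a linear presentation over $R=k[x_1,\ldots,x_d]$, so does $I$; thus $\psi$ has linear entries. The key subtlety is that the row operations producing the block decomposition occur over $k(Z)$, so I must verify that $\psi$ retains the structural features of $\varphi$ recorded in \Cref{modulepres}. Specifically I would check that $I_1(\psi)=\m R''$: the containment $I_1(\psi)\subseteq\m R''$ is clear from linearity, and equality holds because $I_1(\varphi)=\m$ while the invertible $k(Z)$-transformation cannot shrink the ideal of entries below $\m R''$ without contradicting $\mu(I)=n-e+1$ together with the minimality of the presentation.

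The main obstacle I anticipate is the rank-one condition (ii): I must show that $\psi$ has rank one modulo $(x_1,\ldots,x_{d-1})R''$. The natural route is to reduce everything modulo $(x_1,\ldots,x_{d-1})$ and track what the Bourbaki construction does to the reduced matrix $\overline{\varphi}$. Since $\overline{\varphi}$ has rank one over $k[x_d]$ by \Cref{modulepres}, and since passing to $I$ corresponds to splitting off the free rows coming from $y_1,\ldots,y_{e-1}$ via an invertible $k(Z)$-matrix, the image $\overline{\psi}$ is obtained from $\overline{\varphi}$ by an invertible transformation over $k(Z)(x_d)$ followed by deleting rows; such operations cannot increase rank, so $\rank\overline{\psi}\leq 1$. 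For the reverse inequality I would argue that $\overline{\psi}$ cannot be the zero matrix, since otherwise $I_1(\psi)$ would be contained in $(x_1,\ldots,x_{d-1})R''$, contradicting $I_1(\psi)=\m R''$ established above. This forces $\rank\overline{\psi}=1$, giving condition (ii). An alternative and perhaps cleaner approach, which I would pursue if the matrix bookkeeping becomes unwieldy, is to characterize condition (ii) intrinsically via \Cref{hgtlfittlemma} and the Fitting-ideal formulation of $G_s$: since the relevant Fitting ideals of $I$ coincide with those of $E$ up to the generic linear combinations defining $F''$, and since the rank-one condition is equivalent to $\overline{\varphi}$ (respectively $\overline{\psi}$) having a one-dimensional image, I would deduce (ii) for $I$ from (ii) for $E$ directly at the level of invariants rather than matrices. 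With all four conditions verified, $I$ satisfies \Cref{idealsetting} and the proposition follows.
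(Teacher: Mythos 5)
Your route is essentially the paper's: existence and grade of $I$ via \Cref{existBourbaki}, perfection and height two via the Bourbaki sequence and Auslander--Buchsbaum, transfer of $G_{d-1}$-but-not-$G_d$ via \Cref{existBourbaki}(b), and linearity together with $\mu(I)=n-e+1\geq d+1$ via \Cref{gbipresentation}. The genuine gap is your verification that $I_1(\psi)=\m R''$, on which your proof of the rank-one condition (ii) of \Cref{idealsetting} also leans. You argue that equality holds because an invertible $k(Z)$-transformation ``cannot shrink the ideal of entries below $\m R''$ without contradicting $\mu(I)=n-e+1$ together with the minimality of the presentation.'' That implication is false: minimality of a presentation means precisely that $I_1(\psi)\subseteq \m R''$ (no unit entries) and imposes no lower bound on $I_1(\psi)$, while $\mu(I)=n-e+1$ is just the row size of $\psi$. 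For instance, an ideal extended from a polynomial subring in $d-1$ variables is minimally presented by a linear matrix whose ideal of entries is strictly smaller than $\m$, so no general principle of the kind you invoke can exist. The underlying issue is that $\psi$ is not an invertible transform of $\varphi$ (which would indeed preserve the ideal of entries, since $I_1(U\varphi)\subseteq I_1(\varphi)R''$ and $I_1(\varphi)R''=I_1(U^{-1}U\varphi)\subseteq I_1(U\varphi)$), but an invertible transform \emph{followed by deletion of the top $e-1$ rows}, and deleting rows can shrink $I_1$. As written, then, both your condition (i) and your lower bound $\rank \overline{\psi}\geq 1$ are unproven.

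Both facts do hold, but for reasons tied to the explicit shape of the construction, which is how the paper argues. In \Cref{gbipresentation} the new generating set is $y_1,\ldots,y_{e-1},a_e,\ldots,a_n$, so each row of $\psi$ is one of the last $n-e+1$ rows of $\varphi$ plus a $k(Z)$-linear combination of its first $e-1$ rows. By \Cref{modulepres}, every entry of $\varphi$ except the corner entry $x_d$ lies in $k[x_1,\ldots,x_{d-1}]$; in particular the first $e-1$ rows vanish modulo $(x_1,\ldots,x_{d-1})$, so $\overline{\psi}$ coincides with the reduction of the bottom $n-e+1$ rows of $\varphi$, namely the matrix whose only nonzero entry is $x_d$. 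This gives $\rank\overline{\psi}=1$ outright, with no appeal to $I_1(\psi)$ and no circularity. The equality $I_1(\psi)=\m R''$ is instead a genericity statement: since $n-e+1\geq d+1>d$, for generic $Z$ the $n-e+1$ entries in each column of $\psi$ span the same $k(Z)$-vector space of linear forms as the $n$ entries of the corresponding column of $\varphi$, whence $I_1(\psi)=I_1(\varphi)R''=\m R''$; this is what the paper's ``we may assume'' is packaging. Your proposed ``cleaner alternative'' in the final paragraph does not repair the gap either: the rank of $\overline{\psi}$ is a property of the chosen presentation rather than of the module alone, and transferring it ``at the level of invariants'' requires knowing that $y_1,\ldots,y_{e-1}$ remain part of a minimal generating set after reduction modulo $(x_1,\ldots,x_{d-1})$, which is the same genericity point in disguise.
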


\begin{proof}
Let $I$ be a generic Bourbaki ideal constructed as in \Cref{gbipresentation} and notice that $I$ is linearly presented since $E$ is.
Moreover, by \Cref{existBourbaki} it follows that $I$ satisfies $G_{d-1}$, but not $G_d$. Finally, the Bourbaki sequence
    $$0\longrightarrow F'' \longrightarrow E'' \longrightarrow I\longrightarrow 0$$
    shows that the projective dimension of $I$ is at most $1$ and hence exactly $1$, since $E''$ is not free. As $E$ is orientable, we may also take $I$ to have grade at least $2$ by \Cref{existBourbaki}. Thus $I$ is a perfect ideal of grade 2, generated by $\mu(I) = n-e+1 \geq d+1$ homogeneous elements. Furthermore, if $\psi$ denotes the presentation matrix of $I$, we may assume that $I_1(\psi) = \m R''$. Lastly, \Cref{modulepres} and \Cref{gbipresentation} show that modulo $(x_1,\ldots,x_{d-1})$, $\psi$ has rank $1$.    
\end{proof}

As $I$ satisfies the assumptions of \Cref{idealsetting} of the previous section, its Rees algebra $\R(I)$ is well-understood. In particular, $\R(I)$ is Cohen-Macaulay and its defining ideal is known by \Cref{defideal}. 

\begin{prop}\label{deformation}
    Let $E$ be as in \Cref{modulesetting} and let $I$ be a generic Bourbaki ideal of $E$ as in \Cref{choosegbi}. In accordance with \Cref{NotationBourbaki}, let $a_1, \ldots, a_n$ be a generating set of $E$ and, for $1\leq j \leq e-1$, write $y_j = \sum_{i=1}^n Z_{ij} a_i \in E''$ and $Y_j = \sum_{i=1}^n Z_{ij} T_i \in R''[T_1,\ldots,T_n]$. The following statements hold.
    
    \begin{itemize}
        \item[{\rm(a)}] The Rees algebra $\R(E)$ is Cohen-Macaulay.  
        \item[{\rm(b)}] There is an isomorphism $\R(E'')/(F'') \cong \R(I)$. Moreover,  $y_1,\ldots,y_{e-1}$ form a regular sequence on $\R(E'')$.    
        \item[{\rm(c)}] The defining ideal $\J_I$ of $\R(I)$ is $$\J_I = \J R''[T_1,\ldots,T_n] + (Y_1,\ldots,Y_{e-1})$$
    where $\J$ is the defining ideal of $\R(E)$. Additionally, $Y_1,\ldots,Y_{e-1}$ form a regular sequence modulo $\J R''[T_1,\ldots,T_n]$. 
    \end{itemize}
 \end{prop}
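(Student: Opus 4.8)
The plan is to obtain all three statements as consequences of the generic Bourbaki machinery of \Cref{MainBourbaki}, once we verify the hypothesis $\grade\,\R(E)_+ \geq e$ needed to invoke part (b) of that theorem. First I would establish (a). By \Cref{choosegbi}, the generic Bourbaki ideal $I$ satisfies the assumptions of \Cref{idealsetting}, so by \Cref{defideal} its Rees algebra $\R(I)$ is Cohen-Macaulay. Since $E$ is torsion-free and satisfies $G_2$ (as noted in \Cref{E is torsion-free}), the equivalence in \Cref{MainBourbaki}(a) applies, and the Cohen-Macaulayness of $\R(I)$ transfers back to give that $\R(E)$ is Cohen-Macaulay. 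This proves (a).

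For (b), the key is to check that $\grade\,\R(E)_+ \geq e$ so that \Cref{MainBourbaki}(b) yields the isomorphism $\R(E'')/(F'') \cong \R(I)$ together with the regular sequence statement. Since $\R(E)$ is a Cohen-Macaulay domain of dimension $d+e$ (the rank of $E$ is $e$, so $\dim \R(E) = d + e$), the irrelevant ideal $\R(E)_+$ has height $e$: indeed $\R(E)/\R(E)_+ \cong R$ has dimension $d$, so $\hgt \R(E)_+ = (d+e) - d = e$. As $\R(E)$ is Cohen-Macaulay, height equals grade, giving $\grade\,\R(E)_+ = e \geq e$. Thus \Cref{MainBourbaki}(b) applies and furnishes both the isomorphism and the fact that $y_1,\ldots,y_{e-1}$ form a regular sequence on $\R(E'')$.

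For (c), I would translate the module-level isomorphism of (b) into the language of defining ideals. Recall that $F'' = \sum_{j=1}^{e-1} R'' y_j \subseteq E''$, and under the presentation $\R(E'') \cong R''[T_1,\ldots,T_n]/(\J R''[T_1,\ldots,T_n])$ the generators $y_j$ of $F''$ correspond to the linear forms $Y_j = \sum_{i=1}^n Z_{ij} T_i$. Hence the ideal $(F'')\R(E'')$ is the image of $(Y_1,\ldots,Y_{e-1})$, and the isomorphism $\R(E'')/(F'') \cong \R(I)$ identifies $\R(I) \cong R''[T_1,\ldots,T_n]/\big(\J R''[T_1,\ldots,T_n] + (Y_1,\ldots,Y_{e-1})\big)$. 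Since $\J_I$ is by definition the kernel of the canonical surjection $R''[T_1,\ldots,T_n] \twoheadrightarrow \R(I)$, this gives exactly $\J_I = \J R''[T_1,\ldots,T_n] + (Y_1,\ldots,Y_{e-1})$. Finally, the regularity of $Y_1,\ldots,Y_{e-1}$ modulo $\J R''[T_1,\ldots,T_n]$ is precisely the statement that $y_1,\ldots,y_{e-1}$ is a regular sequence on $\R(E'')$, already obtained in (b).

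The main obstacle I anticipate is the careful bookkeeping in (c): one must verify that passing from $R$ to $R'' = R(Z)$ does not disturb the shape of the defining ideal, and that the generators $y_j$ of the free module $F''$ correspond precisely to the linear forms $Y_j$ under the identification $\S(E'') \twoheadrightarrow \R(E'')$. Because each $y_j = \sum_i Z_{ij} a_i$ and $\eta(T_i) = a_i$, the element $Y_j = \sum_i Z_{ij} T_i$ maps to $y_j$ under the canonical map $R''[T_1,\ldots,T_n] \twoheadrightarrow \R(E'')$, so the correspondence is transparent; the only subtlety is confirming that the extension $\J R''[T_1,\ldots,T_n]$ remains the defining ideal of $\R(E'') = \R(E) \otimes_R R''$, which holds since $R \to R''$ is flat.
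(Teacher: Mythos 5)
Your proposal is correct and takes essentially the same route as the paper's proof: establish that $\R(I)$ is Cohen-Macaulay via \Cref{choosegbi} and \Cref{defideal}, transfer both the Cohen-Macaulayness and the deformation statement through \Cref{MainBourbaki}, and deduce (c) from (b) by observing that the natural map $R''[T_1,\ldots,T_n] \twoheadrightarrow \R(E'')$ sends $Y_j$ to $y_j$. The only difference is that you verify the hypothesis $\grade\, \R(E)_+ \geq e$ explicitly by a dimension count, whereas the paper triggers the ``Moreover'' clause of \Cref{MainBourbaki} directly once the Cohen-Macaulay condition in its part (a) is known to hold; both are valid.
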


\begin{proof}
    By \Cref{choosegbi}, the generic Bourbaki ideal $I$ satisfies the assumptions of \Cref{defideal}, hence $\R(I)$ is Cohen-Macaulay. Assertions (a) and (b) then follow from \Cref{MainBourbaki}. Additionally, notice that the natural map $\pi \colon R''[T_1,\ldots,T_n] \twoheadrightarrow \R(E'')$ maps $Y_1,\ldots,Y_{e-1}$ to $y_1,\ldots,y_{e-1}$, so (c) follows from (b).  
\end{proof}

With \Cref{deformation}(b), one says that $\R(E'')$ is a \textit{deformation} of $\R(I)$. We now show that the shape of the defining ideal is actually preserved under this deformation. To this end, the specific form of $\J_I$ given in \Cref{defideal} will be crucial.

\begin{rem}
Notice that, with the presentation matrix $\varphi$ of $E$ as in \Cref{modulepres}, the presentation matrix $\psi$ of $I$ can be constructed to have a similar shape.  Indeed, from \Cref{gbipresentation} and \Cref{modulepres}, it follows that $\psi$ has the form
\begin{equation}\label{presmatrixpsi}
 \psi =  \begin{pmatrix}
   & & & *\\
   & \psi'& & \vdots\\
   & & & *\\
    *&\cdots& *& x_d
\end{pmatrix}
\end{equation}
where the entries of $\psi'$ and the ``$*$" entries are linear and contained in the subring $k(Z)[x_1,\ldots,x_{d-1}]$ of $R''$. With this, the Jacobian dual of $\psi$ has the form
\begin{equation}\label{modulepsiJDeqn}
 B(\psi) =  \begin{pmatrix}
   & & & \bullet \\
   & B_I' & & \vdots\\
   & & & \bullet \\
    0&\cdots&0 & T_n
\end{pmatrix},
\end{equation}
where the entries of $B_I'$ and the ``$\bullet$" entries are linear forms in $k(Z)[T_1,\ldots,T_n]$. By \Cref{defideal}, the defining ideal of $\R(I)$ is $\J_I = \L_I +I_{d-1}(B_I')$.
\end{rem}

\begin{lemma}\label{keylemma}
With $Y_1,\ldots,Y_{e-1}$ as in \Cref{deformation}, we have 
$$\L+I_{d-1}(B') + (Y_1,\ldots,Y_{e-1}) = \L_I + I_{d-1}(B_I')$$
in $R''[T_1,\ldots,T_n]$.
\end{lemma}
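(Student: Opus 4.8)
The plan is to treat the ``linear part'' and the ``determinantal part'' of the two ideals separately and to reconcile them modulo the linear forms $Y_1,\ldots,Y_{e-1}$. For the linear part, the cleanest input is the right-exactness of the symmetric algebra functor applied to the Bourbaki sequence $0\to F''\to E''\to I\to 0$ from \Cref{choosegbi}. Since $F''$ is free on $y_1,\ldots,y_{e-1}$ and $\S(E'')\cong R''[T_1,\ldots,T_n]/\L$, the surjection $E''\twoheadrightarrow I$ induces $\S(I)\cong\S(E'')/(y_1,\ldots,y_{e-1})\S(E'')$, whose degree-one generators $y_j$ are the images of $Y_j$. Hence, presenting $\S(I)$ over the same polynomial ring $R''[T_1,\ldots,T_n]$, its defining ideal satisfies $\L_I=\L+(Y_1,\ldots,Y_{e-1})$. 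In particular $(Y_1,\ldots,Y_{e-1})\subseteq\L_I$, so the asserted identity collapses to showing
$$\L+(Y_1,\ldots,Y_{e-1})+I_{d-1}(B')=\L+(Y_1,\ldots,Y_{e-1})+I_{d-1}(B_I'),$$
that is, that $I_{d-1}(B')$ and $I_{d-1}(B_I')$ agree after adjoining the $Y_j$.

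For the determinantal part, I would compare the two Jacobian duals directly modulo $(Y_1,\ldots,Y_{e-1})$. Because the submatrix $(Z_{ij})_{1\le i,j\le e-1}$ is invertible over $R''=R(Z)$, the forms $Y_1,\ldots,Y_{e-1}$ can be used to eliminate $T_1,\ldots,T_{e-1}$, expressing each as a $k(Z)$-linear combination of $T_e,\ldots,T_n$; reducing the defining relation $[T_1\ldots T_n]\,\varphi=[x_1\ldots x_d]\,B(\varphi)$ modulo $(Y_1,\ldots,Y_{e-1})$ therefore keeps all entries linear in $T_e,\ldots,T_n$. Using the change of generating set of \Cref{gbipresentation}, under which $\varphi$ acquires $\psi$ as its lower block, the left-hand side reduces precisely to $[T_e\ldots T_n]\,\psi=[x_1\ldots x_d]\,B(\psi)$. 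Thus the image of $B(\varphi)$ modulo $(Y_1,\ldots,Y_{e-1})$ is itself a Jacobian dual of $\psi$ with respect to $x_1,\ldots,x_d$, with entries in $k(Z)[T_e,\ldots,T_n]$. Since $\psi$ is linearly presented, such a Jacobian dual is unique, so this reduction equals $B(\psi)$; comparing the top-left blocks singled out in (\ref{moduleJD}) and (\ref{modulepsiJDeqn}) gives the entrywise congruence $B'\equiv B_I'\pmod{(Y_1,\ldots,Y_{e-1})}$. From this I would deduce the determinantal identity: writing $B'=B_I'+N$ with the entries of $N$ in $(Y_1,\ldots,Y_{e-1})$ and expanding any $(d-1)$-minor of $B'$ multilinearly along the rows of $N$ shows that each generator of $I_{d-1}(B')$ differs from the corresponding generator of $I_{d-1}(B_I')$ by an element of $(Y_1,\ldots,Y_{e-1})$, and symmetrically. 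Hence $I_{d-1}(B')+(Y_1,\ldots,Y_{e-1})=I_{d-1}(B_I')+(Y_1,\ldots,Y_{e-1})$, and adjoining $\L$ yields the claim by the reduction of the first paragraph.

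I expect the main obstacle to be the bookkeeping in the second paragraph: matching the reduction of $[T_1\ldots T_n]\,\varphi$ modulo $(Y_1,\ldots,Y_{e-1})$ with $[T_e\ldots T_n]\,\psi$ exactly, rather than merely up to invertible column operations, so that uniqueness of the Jacobian dual yields the entrywise congruence $B'\equiv B_I'$ and not just an equality of the ideals $I_{d-1}(B')$ and $I_{d-1}(B_I')$ modulo $(Y_1,\ldots,Y_{e-1})$. The compatibility of the ``nice forms'' (\ref{modulephiform}) and (\ref{presmatrixpsi}), guaranteed by the construction in \Cref{gbipresentation} and \Cref{modulepres}, is precisely what makes this matching possible. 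Should only column-equivalence be available, one would instead invoke the invariance of determinantal ideals under invertible column operations, which still suffices here because such operations preserve the block structure of $B(\varphi)$ and fix its last row $(0,\ldots,0,T_n)$.
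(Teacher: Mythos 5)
Your proof is correct, and its determinantal half takes a genuinely different route from the paper's. The linear half is the same in substance: your appeal to right-exactness of the symmetric algebra functor along the Bourbaki sequence yields precisely the paper's identity $\L+(Y_1,\ldots,Y_{e-1})=\L_I$ (equation (\ref{Ldeforms})), which the paper obtains instead from the exact matrix identity $[T_1\ldots T_n]\cdot\varphi=[Y_1\ldots Y_{e-1}]\cdot A+[T_e\ldots T_n]\cdot\psi$ furnished by \Cref{gbipresentation}. For the determinantal half, however, the paper never establishes your entrywise congruence $B'\equiv B_I'\pmod{(Y_1,\ldots,Y_{e-1})}$: it passes to the auxiliary matrices $B''$ and $B_I''$ obtained by dividing the last rows of $B(\varphi)$ and $B(\psi)$ by $T_n$, notes that $\L$ and $\L_I$ are generated by the entries of $[x_1\ldots x_{d-1}\ x_dT_n]\cdot B''$ and $[x_1\ldots x_{d-1}\ x_dT_n]\cdot B_I''$, and then invokes the deformation result \cite[4.4]{BM16} for the regular sequence $x_1,\ldots,x_{d-1},x_dT_n$ to conclude $\L+I_d(B'')+(Y_1,\ldots,Y_{e-1})=\L_I+I_d(B_I'')$ in one stroke. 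Your replacement --- reduce the defining relation modulo $(Y_1,\ldots,Y_{e-1})$, observe that the image of $B(\varphi)$ is again a Jacobian dual of $\psi$ with entries of degree zero in the $x_i$, and invoke uniqueness of Jacobian duals of matrices with linear entries (\cite[p.~47]{SUV93}, recalled in \Cref{prelimsection}) --- is more elementary and self-contained, and it even proves the slightly stronger equality $I_{d-1}(B')+(Y_1,\ldots,Y_{e-1})=I_{d-1}(B_I')+(Y_1,\ldots,Y_{e-1})$, with no need to carry $\L$ along. What the paper's route buys is robustness: \cite[4.4]{BM16} applies equally when the presentation fails to be linear (the situation of \cite{BM16}, \cite{Costantini}, \cite{Weaver1}), where Jacobian duals are no longer unique and your linear-independence argument is unavailable; in the present all-linear setting both arguments work. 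Two small points: the obstacle you anticipate in your final paragraph does not materialize, since by \Cref{gbipresentation} the new presentation arises from $\varphi$ by row operations alone, giving exactly $\varphi=ZA+\bigl[\begin{smallmatrix}0\\ \psi\end{smallmatrix}\bigr]$ with $Z=(Z_{ij})$, so the reduction of $[T_1\ldots T_n]\cdot\varphi$ is literally $[T_e\ldots T_n]\cdot\psi$ rather than merely column-equivalent to it; and your phrase ``since $\psi$ is linearly presented'' should read ``since $\psi$ has linear entries,'' which is the hypothesis that uniqueness of the Jacobian dual actually requires.
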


\begin{proof}
As noted in the proof of \Cref{deformation}, we have that each $Y_j$ maps to $y_j$ under the natural map $\pi \colon R''[T_1,\ldots,T_n] \twoheadrightarrow \R(E'')$. Moreover, taking $y_1,\ldots,y_{e-1}$ to be part of a minimal generating set of $E''$, as in \Cref{gbipresentation}, modulo $Y_1,\ldots,Y_{e-1}$ we have
    $$[x_1\ldots x_d] \cdot B(\varphi)  \equiv  [T_1\ldots T_n] \cdot \begin{bmatrix} \hspace{2mm}0\hspace{2mm} \\
\psi\end{bmatrix} \mod\, (Y_1,\ldots,Y_{e-1}).$$
In particular, since $[T_1\ldots T_n] \cdot \psi = [x_1\ldots x_d] \cdot B(\psi)$, we have
$$[x_1\ldots x_d]\cdot B(\varphi) \equiv  [x_1\ldots x_d]\cdot B(\psi) \mod\, (Y_1,\ldots,Y_{e-1}),$$
from which it follows that 
\begin{equation}\label{Ldeforms}
    \L + (Y_1,\ldots,Y_{e-1}) = \L_I .
\end{equation}

Now that we have shown that the shape of $\L$ and $\L_I$ is preserved under the deformation of \Cref{deformation}, we now show that the same holds for $\L+I_{d-1}(B')$ and $\L_I+I_{d-1}(B_I')$. Consider the auxiliary matrices 
\[
B'' =  \begin{pmatrix}
   & & & \bullet \\
   & B' & & \vdots\\
   & & & \bullet \\
    0&\cdots&0 & 1
\end{pmatrix}\quad \quad  {\rm and} \quad \quad 
B_I'' =  \begin{pmatrix}
   & & & \bullet \\
   & B_I' & & \vdots\\
   & & & \bullet \\
    0&\cdots&0 & 1
\end{pmatrix}
\]
obtained by dividing the last row of $B(\varphi)$ and $B(\psi)$ by $T_n$, respectively, just as in the construction outlined prior to \Cref{cmhgt}. Notice that $\L = ([x_1\ldots x_{d-1} \,\, x_dT_n] \cdot B'')$ and $\L_I = ([x_1\ldots x_{d-1} \,\, x_dT_n] \cdot B_I'')$. As we had seen that $\L + (Y_1,\ldots,Y_{e-1}) = \L_I$ in (\ref{Ldeforms}) and $x_1,\ldots, x_{d-1}, x_dT_n$ is a regular sequence, it follows from \cite[4.4]{BM16} that $$\L+I_d(B'') + (Y_1,\ldots,Y_{e-1}) = \L_I + I_d(B_I'').$$ However, notice that $I_d(B'') = I_{d-1}(B')$ and $I_d(B_I'') = I_{d-1}(B_I')$, from which the claim follows.
\end{proof}

With \Cref{keylemma}, we are finally able to describe the defining ideal of $\R(E)$ and provide a module analogue of \Cref{defideal}.

\begin{thm}\label{defidealmodule}
With $E$ as in \Cref{modulesetting}, the defining ideal of $\R(E)$ is 
$$\J = \L: (x_1,\ldots,x_{d-1}) =\L+I_{d-1}(B').$$  
\end{thm}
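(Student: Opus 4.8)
=== PROOF PROPOSAL ===

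\textbf{Strategy.} The plan is to transfer the known description of the defining ideal $\J_I$ of $\R(I)$ from \Cref{defideal} back to the defining ideal $\J$ of $\R(E)$, using the deformation established in \Cref{deformation} and the compatibility of the relevant ideals proved in \Cref{keylemma}. The containment $\L + I_{d-1}(B') \subseteq \J$ is already available from \Cref{modulecontainment}, so the real content is the reverse containment $\J \subseteq \L + I_{d-1}(B')$, together with the saturation identity $\J = \L : (x_1,\ldots,x_{d-1})$.

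\textbf{Key steps.} First I would recall from \Cref{deformation}(c) that $\J_I = \J R''[T_1,\ldots,T_n] + (Y_1,\ldots,Y_{e-1})$, with $Y_1,\ldots,Y_{e-1}$ a regular sequence modulo $\J R''[T_1,\ldots,T_n]$. By \Cref{defideal} applied to the generic Bourbaki ideal $I$ (which satisfies \Cref{idealsetting} by \Cref{choosegbi}), we also have $\J_I = \L_I + I_{d-1}(B_I')$. Combining these two expressions for $\J_I$ with the identity of \Cref{keylemma},
\[
\J R''[T_1,\ldots,T_n] + (Y_1,\ldots,Y_{e-1}) = \J_I = \L_I + I_{d-1}(B_I') = \L + I_{d-1}(B') + (Y_1,\ldots,Y_{e-1}),
\]
so both $\J R''[\T]$ and $\L + I_{d-1}(B')$ become equal modulo $(Y_1,\ldots,Y_{e-1})$. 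Since $Y_1,\ldots,Y_{e-1}$ form a regular sequence modulo $\J R''[\T]$, and since by \Cref{deformation}(b) they are regular on $\R(E'')$, I would use this regularity to \emph{cancel} the $Y_j$'s: the ideal $\L + I_{d-1}(B')$ sits inside $\J R''[\T]$ by \Cref{modulecontainment} (base-changed to $R''$), and the quotient $\J R''[\T]/(\L + I_{d-1}(B'))$ becomes zero after going modulo the regular sequence $(Y_1,\ldots,Y_{e-1})$; a standard Nakayama/regular-sequence argument then forces it to be zero already, giving $\J R''[\T] = \L R''[\T] + I_{d-1}(B')R''[\T]$. Faithful flatness of $R''[\T]$ over $R[\T]$ (coming from $R'' = R(Z)$ being a localization of a polynomial extension) then descends the equality to $\J = \L + I_{d-1}(B')$ over $R[\T]$.

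\textbf{The saturation identity.} For the remaining equality $\J = \L : (x_1,\ldots,x_{d-1})$, I would note the containments $\L + I_{d-1}(B') \subseteq \L : (x_1,\ldots,x_{d-1}) \subseteq \J$: the first follows from Cramer's rule applied to $[\ell_1\ldots\ell_{n-e}] = [x_1\ldots x_{d-1}\,\,x_dT_n]\cdot B''$ exactly as in \Cref{defideal}, and the second because $\J$ is prime and does not contain $(x_1,\ldots,x_{d-1})$. Since we have just shown the two outer terms coincide, all three are equal.

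\textbf{Main obstacle.} I expect the delicate point to be the cancellation of the regular sequence $(Y_1,\ldots,Y_{e-1})$ to pass from the modulo-$(Y_j)$ equality to an honest equality of ideals over $R''[\T]$. One must verify that the $Y_j$ form a regular sequence \emph{simultaneously} on $R''[\T]/(\L + I_{d-1}(B'))$ and on $R''[\T]/\J R''[\T] = \R(E'')$, so that equality after quotienting genuinely lifts; this is where \Cref{deformation}(b)--(c) and the precise regular-sequence statements in \Cref{MainBourbaki} must be invoked carefully, rather than merely the equality of ideals modulo $(Y_j)$. The faithfully flat descent from $R''[\T]$ to $R[\T]$ is routine by comparison.
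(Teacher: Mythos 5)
Your proposal is correct and follows essentially the same route as the paper's proof: transfer $\J_I = \L_I + I_{d-1}(B_I')$ from \Cref{defideal} through \Cref{deformation} and \Cref{keylemma}, then cancel $(Y_1,\ldots,Y_{e-1})$ by a regular-sequence/Nakayama argument and descend faithfully flatly to $R[T_1,\ldots,T_n]$. The only divergence is your worry about needing the $Y_j$ to be regular \emph{simultaneously} on both quotients, which is unnecessary: the paper's modular-law computation
$\J R''[T_1,\ldots,T_n] = \big(\L+I_{d-1}(B')\big) + \big((Y_1,\ldots,Y_{e-1}) \cap \J R''[T_1,\ldots,T_n]\big) = \big(\L+I_{d-1}(B')\big) + (Y_1,\ldots,Y_{e-1})\,\J R''[T_1,\ldots,T_n]$
uses only the containment $\L+I_{d-1}(B') \subseteq \J R''[T_1,\ldots,T_n]$ and regularity of the $Y_j$ modulo $\J R''[T_1,\ldots,T_n]$, both of which \Cref{modulecontainment} and \Cref{deformation}(c) already supply.
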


\begin{proof}
The containments $\L+I_{d-1}(B')\subseteq \L: (x_1,\ldots,x_{d-1}) \subseteq \J$ are clear, hence it suffices to show that $\J = \L+I_{d-1}(B')$. Recall that $\J_I = \J R''[T_1,\ldots,T_n] + (Y_1,\ldots,Y_{e-1})$ by \Cref{deformation} and additionally, $\J_I = \L_I+I_{d-1}(B_I')$ by \Cref{choosegbi} and \Cref{defideal}. Thus by \Cref{keylemma}, we have
$$\J R''[T_1,\ldots,T_n] +(Y_1,\ldots,Y_{e-1}) = \L+I_{d-1}(B') + (Y_1,\ldots,Y_{e-1}).$$
Since $Y_1,\ldots,Y_{e-1}$ is a regular sequence modulo $\J R''[T_1,\ldots,T_n]$, we have that
\begin{align*}
  \J R''[T_1,\ldots,T_n] =& \big(\J R''[T_1,\ldots,T_n] +(Y_1,\ldots,Y_{e-1})\big) \cap \J R''[T_1,\ldots,T_n]\\[1ex]  
  =& \big(\L+I_{d-1}(B') + (Y_1,\ldots,Y_{e-1})\big) \cap \J R''[T_1,\ldots,T_n]\\[1ex]  
  =& \big(\L+I_{d-1}(B')\big) + (Y_1,\ldots,Y_{e-1}) \cap \J R''[T_1,\ldots,T_n]\\[1ex]
  =& \big(\L+I_{d-1}(B')\big) + (Y_1,\ldots,Y_{e-1})\J R''[T_1,\ldots,T_n].
\end{align*}
Nakayama's lemma then implies that $\J R''[T_1,\ldots,T_n] =\L+I_{d-1}(B')$ in $R''[T_1,\ldots,T_n]$, hence $\J=\L+I_{d-1}(B')$ in $R[T_1,\ldots,T_n]$, as claimed.
\end{proof}

\begin{cor}\label{F(E)cor}
With $E$ as in \Cref{modulesetting}, the special fiber ring is $\F(E) \cong k[T_1,\ldots,T_n]/I_{d-1}(B')$. Moreover, $\F(E)$ is a Cohen-Macaulay domain of dimension $d+e-1$, hence $E$ has maximal analytic spread $\ell(E)= d+e-1$.
\end{cor}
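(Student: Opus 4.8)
The plan is to deduce \Cref{F(E)cor} from \Cref{defidealmodule} in essentially the same way that \Cref{F(I)defideal} was deduced from \Cref{defideal}. The special fiber ring is obtained by tensoring $\R(E) \cong R[T_1,\ldots,T_n]/\J$ with the residue field $k = R/\m$, so I would first record the isomorphism $\F(E) \cong (R[T_1,\ldots,T_n]/\J)\otimes_R k \cong k[T_1,\ldots,T_n]/\J k[T_1,\ldots,T_n]$, where $\J k[T_1,\ldots,T_n]$ denotes the image of $\J$ after setting $x_1 = \cdots = x_d = 0$. Using the description $\J = \L + I_{d-1}(B')$ from \Cref{defidealmodule}, the point is that the linear generators $\ell_1,\ldots,\ell_{n-e}$ of $\L$ all lie in $\m R[T_1,\ldots,T_n]$ (being $R$-linear combinations of the $T_i$ with coefficients in $\m$, since $I_1(\varphi)=\m$), so they vanish modulo $\m$, whereas the entries of $B'$ already lie in the subring $k[T_1,\ldots,T_n]$ and are therefore unaffected. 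Hence the image of $\J$ in $k[T_1,\ldots,T_n]$ is exactly $I_{d-1}(B')$, giving $\F(E) \cong k[T_1,\ldots,T_n]/I_{d-1}(B')$.

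Once this identification is in hand, the structural conclusions follow immediately from known facts. By \Cref{B'hgt} (applied to the Jacobian dual $B'_I$ of the generic Bourbaki ideal, or directly to $B'$ over the relevant base), $I_{d-1}(B')$ is a Cohen-Macaulay prime ideal of $k[T_1,\ldots,T_n]$ of height $n-d$. Therefore $\F(E)$ is a Cohen-Macaulay domain, and its dimension is $n - (n-d) = d$\,\dots\ here I must be careful, since the module case has $\mu(E) = n \geq d+e$, so $\dim \F(E) = n - \hgt I_{d-1}(B')$. The clean way to get the stated value $d+e-1$ is to pass through the generic Bourbaki ideal: I would invoke \Cref{CMfiber} together with the isomorphism $\F(I) \cong \F(E'')/(F'')$, so that $\dim \F(E) = \dim \F(I) + (e-1)$. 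Since $\F(I) \cong k(Z)[T_1,\ldots,T_{n-e+1}]/I_{d-1}(B'_I)$ has dimension $d$ by \Cref{F(I)defideal}, adding back the $e-1$ generic variables yields $\ell(E) = \dim \F(E) = d + (e-1) = d+e-1$, which is the maximal possible analytic spread $\ell(E) \leq \dim R + e - 1 = d+e-1$.

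The main subtlety, and the step I would treat most carefully, is the bookkeeping of the dimension count: the matrix $B'$ in the module setting is $(d-1)\times(n-e-1)$ rather than $(d-1)\times(n-2)$, so the naive computation $n - \hgt I_{d-1}(B')$ using the full variable count $n$ must be reconciled with the rank/Bourbaki reduction. The cleanest argument avoids computing $\hgt I_{d-1}(B')$ over $k[T_1,\ldots,T_n]$ directly and instead transfers the Cohen-Macaulay and dimension statements from $\F(I)$ to $\F(E)$ via the regular sequence $y_1,\ldots,y_{e-1}$ furnished by \Cref{deformation}(b) and \Cref{CMfiber}(b); this simultaneously establishes that $\F(E)$ is Cohen-Macaulay, that it is a domain (being a deformation of the domain $\F(I)$, or directly since $I_{d-1}(B')$ is prime), and that its dimension is exactly $d+e-1$. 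I would therefore write the proof to lean on the generic Bourbaki machinery for the quantitative claims, using the explicit presentation $\F(E) \cong k[T_1,\ldots,T_n]/I_{d-1}(B')$ only to exhibit the stated form.
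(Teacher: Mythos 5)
Your proposal is correct and takes essentially the same route as the paper: the isomorphism $\F(E)\cong k[T_1,\ldots,T_n]/I_{d-1}(B')$ is read off from $\J=\L+I_{d-1}(B')$ exactly as in the paper's proof (since $\L\subseteq \m R[T_1,\ldots,T_n]$ while the entries of $B'$ lie in $k[T_1,\ldots,T_n]$), and the dimension and Cohen--Macaulay statements are transferred from $\F(I)$ via the generic Bourbaki machinery, the paper invoking \cite[3.10(a)]{SUV03} for $\ell(E)=\ell(I)+e-1=d+e-1$ where you count along the regular sequence furnished by \Cref{CMfiber}(b) --- the same computation. On the domain claim you are in fact more explicit than the paper; of your two suggested justifications, the cleanest to make rigorous is primality of $I_{d-1}(B')\subseteq k[T_1,\ldots,T_n]$, which follows at once from the observation that $I_{d-1}(B')=\J\cap k[T_1,\ldots,T_n]$ (apply the retraction killing $x_1,\ldots,x_d$ to $\J=\L+I_{d-1}(B')$), a contraction of the prime $\J$; your deformation argument also works, but requires the remark that $Y_1,\ldots,Y_{e-1}$ are homogeneous of positive degree on the graded ring $\F(E'')=\F(E)\otimes_k k(Z)$, so that a Krull-intersection argument lifts the domain property from $\F(I)$ to $\F(E'')$ and hence to its subring $\F(E)$ --- without such a grading or Krull-type hypothesis, a ring admitting a nonzerodivisor modulo which it is a domain need not itself be a domain.
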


\begin{proof}
From \Cref{defidealmodule}, we have that $\J =\L+I_{d-1}(B')$. The first claim then follows as in \Cref{F(I)defideal}, as the entries of $B'$ belong to $k[T_1, \ldots, T_n]$ and $\L \subseteq \m$. Moreover, it follows that $\dim \F(E) = \ell(E) = d+e-1$ from \Cref{F(I)defideal} and \cite[3.10(a)]{SUV03}. Lastly, since $\R(I)$ and $\F(I)$ are simultaneously Cohen-Macaulay by \Cref{defideal} and \Cref{F(I)defideal}, it follows from \Cref{CMfiber} that $\F(E)$ is Cohen-Macaulay. 
 \end{proof}

 As a consequence of the proof of \Cref{defidealmodule} and \Cref{d+epres}, in the case that $\mu(E) = d+e$ we may remove assumption (ii) from \Cref{modulesetting} to obtain the following module analogue of \cite[5.2]{Nguyen14}. As noted in the discussion prior to \Cref{d+epres}, the assumption that $k$ be algebraically closed in the statement of \cite[5.2]{Nguyen14} may be removed. We note that the omission of this assumption is critical if one intends to employ a generic Bourbaki ideal to prove the result for modules. Indeed, even if the field $k$ is algebraically closed, its extension $k(Z)$ is not, where $Z$ is the set of indeterminates as in \Cref{NotationBourbaki}.

\begin{thm}\label{d+eVersion}
  Let $R = k[x_1, \ldots, x_d]$ be a polynomial ring over a field $k$ with $\m = (x_1, \ldots,x_d)$ and $d \geq 3$. Let $E$ be a finitely generated $R$-module of projective dimension one and rank $e>0$ admitting a minimal presentation matrix $\varphi$ consisting of linear entries and such that $I_1(\varphi) = \m$. Assume that $\mu(E) = d + e$, and that $E$ satisfies $G_{d-1}$ but does not satisfy $G_d$. The defining ideal of $\R(E)$ is
   $$\J= \L+\det(B').$$
  Moreover, both $\R(E)$ and the special fiber ring $\F(E)$ are Cohen-Macaulay.
\end{thm}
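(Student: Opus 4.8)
The plan is to derive \Cref{d+eVersion} as essentially a special case of \Cref{defidealmodule}, using \Cref{d+epres} to recover the one hypothesis—assumption (ii) of \Cref{modulesetting}—that is not explicitly assumed here. First I would observe that \Cref{d+epres} guarantees precisely that, when $\mu(E) = d+e$, the remaining hypotheses (projective dimension one, rank $e>0$, linear presentation with $I_1(\varphi) = \m$, and $G_{d-1}$ but not $G_d$) force $\varphi$ to have rank $1$ modulo some ideal generated by $d-1$ variables after a change of coordinates. Thus $E$ satisfies all of \Cref{modulesetting}, and \Cref{defidealmodule} applies verbatim to yield $\J = \L + I_{d-1}(B')$ together with the Cohen-Macaulayness of $\R(E)$; the Cohen-Macaulayness of $\F(E)$ comes from \Cref{F(E)cor}.

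The only genuine content beyond citing \Cref{defidealmodule} is reconciling the notation: the theorem writes the non-linear part as $\det(B')$, a single determinant, rather than the determinantal ideal $I_{d-1}(B')$. The point is that when $\mu(E) = d+e$, the matrix $\varphi$ has size $(d+e) \times d$ (from the resolution $0 \to R^d \overset{\varphi}{\to} R^{d+e} \to E \to 0$ appearing in \Cref{d+epres}), so after deleting the last row and column the auxiliary matrix $B'$ is a $(d-1) \times (d-1)$ square matrix. Hence $I_{d-1}(B')$ is generated by the single maximal minor $\det(B')$, and the two formulations coincide. I would verify the dimension count explicitly: $\varphi'$ is $(d+e-1)\times(d-1)$, so its Jacobian dual $B'$ with respect to $x_1, \ldots, x_{d-1}$ has $d-1$ rows and $(d+e-1)-1 = d+e-2$ columns—wait, this must be checked against the general convention, where $B'$ is $(d-1)\times(n-2)$ with $n = \mu(I) = n_E - e + 1$; substituting $n_E = d+e$ gives $n = d+1$, so $B'$ is $(d-1)\times(d-1)$, confirming it is square and $I_{d-1}(B') = (\det B')$.

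I would then write the proof as a short two-sentence reduction. The first sentence invokes \Cref{d+epres} to install assumption (ii), placing $E$ in \Cref{modulesetting}. The second sentence applies \Cref{defidealmodule} and \Cref{F(E)cor} to conclude $\J = \L + I_{d-1}(B') = \L + (\det B')$ and that both $\R(E)$ and $\F(E)$ are Cohen-Macaulay. I do not anticipate any obstacle here: the work has all been done upstream, and the only subtlety—that the determinantal ideal collapses to a principal ideal in the $\mu(E) = d+e$ case—is a routine size bookkeeping matter rather than a new argument. The main thing to be careful about is stating clearly \emph{why} \Cref{d+epres} is the correct tool and \emph{why} the square shape of $B'$ makes $I_{d-1}(B') = (\det B')$, so that the transition between the two notational conventions is transparent to the reader.
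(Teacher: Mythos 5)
Your proposal is correct, and it is in fact the reduction that the paper's own preamble advertises (``As a consequence of the proof of \Cref{defidealmodule} and \Cref{d+epres}\ldots''): \Cref{d+epres} installs assumption (ii) of \Cref{modulesetting}, so the hypotheses of \Cref{defidealmodule} are met, and the only remaining content is your (correctly self-corrected) bookkeeping that $B(\varphi)$ is $d\times d$ because $\varphi$ is $(d+e)\times d$, hence $B'$ is square of size $(d-1)\times(d-1)$ and $I_{d-1}(B') = (\det B')$. The paper's written proof, however, takes a slightly different route: instead of citing \Cref{defidealmodule} as a black box, it notes that the generic Bourbaki ideal $I$ of \Cref{choosegbi} has $\mu(I)=d+1$ and satisfies the hypotheses of Nguyen's original theorems \cite[5.1 and 5.2]{Nguyen14} (with the algebraic-closedness hypothesis removed), so that $\R(I)$ is Cohen-Macaulay and $\J_I = \L_I + \det(B_I')$, and then it re-runs the deformation argument of \Cref{defidealmodule} (via \Cref{MainBourbaki}, \Cref{deformation}(b), and \Cref{keylemma} plus Nakayama) to descend to $\J = \L + \det(B')$. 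What the paper's route buys is an explicit demonstration that this theorem is the module analogue of Nguyen's ideal-case result, obtainable from it directly once one knows the algebraically-closed hypothesis can be dropped --- a point the paper stresses is essential, since the Bourbaki ideal lives over $R''=R(Z)$ whose residue field $k(Z)$ is never algebraically closed; what your route buys is economy, using the paper's stronger \Cref{defideal}/\Cref{defidealmodule} in place of Nguyen's theorems. Two small attribution corrections: the Cohen-Macaulayness of $\R(E)$ is not part of the statement of \Cref{defidealmodule} but follows from \Cref{deformation}(a) (equivalently from \Cref{MainBourbaki}, since $\R(I)$ is Cohen-Macaulay); and for $\F(E)$ the paper does not invoke \Cref{F(E)cor} but instead argues directly that $(x_1,\ldots,x_d)+\J = (x_1,\ldots,x_d)+(\det B')$, so $\F(E)$ is a hypersurface ring, hence Cohen-Macaulay --- your citation of \Cref{F(E)cor} is an equally valid substitute.
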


\begin{proof}
As noted in \Cref{d+epres}, the presentation matrix $\varphi$ may be taken to have the form given in (\ref{modulephiform}) in this setting. Moreover, as in the proof of \Cref{choosegbi}, a generic Bourbaki ideal $I$ may be taken to satisfy the assumptions of \cite[5.2]{Nguyen14}. As $\R(I)$ is Cohen-Macaulay by \cite[5.1]{Nguyen14}, \Cref{MainBourbaki} implies that the deformation condition in \Cref{deformation}(b) is satisfied. Hence, proceeding as in the proof of \Cref{defidealmodule}, one obtains that $\J= \L+\det(B')$. 

The Cohen-Macaulayness of $\R(E)$ follows from \Cref{MainBourbaki}, as $\R(I)$ is Cohen-Macaulay. Lastly, since $\mu(E) = d+e$, the submatrix $B'$ of the Jacobian dual $B(\varphi)$ is a square $(d-1)\times (d-1)$ matrix. Its determinant is a degree-$(d-1)$ homogeneous polynomial in $k[T_1,\ldots,T_n]$, and so $(x_1,\ldots,x_d)+\J = (x_1,\ldots,x_d)+ (\det B')$. Thus, $\F(E)$ is a hypersurface ring defined by the principal ideal $(\det(B'))$, and is hence Cohen-Macaulay.
\end{proof}

\subsection*{Acknowledgements}
The authors thank Prof.~Aron Simis for his valuable comments on a preliminary version of this paper which improved the exposition of \Cref{Gssubsection}, and for pointing out the Cohen-Macaulayness of the ideal in \Cref{B'hgt}. A. Costantini was partially supported by an AMS-Simons Travel Grant.


\begin{thebibliography}{99}

 \bibitem{AN72} M. Artin and M. Nagata, \textit{Residual intersections in Cohen-Macaulay rings}, J. Math. Kyoto Univ. \textbf{12} (1972), 307--323.
 
 \bibitem{Avramov81} L.~Avramov, \textit{Complete intersections and symmetric algebras}, J. Algebra \textbf{73} (1981), 248--263. 

 \bibitem{BKM90} W. Bruns, A.R. Kustin, and M. Miller, \textit{the resolution of the generic residual intersection of a complete intersection}, J. Algebra \textbf{128} (1990), 214--239.

 \bibitem{BE77} D.A. Buchsbaum and D. Eisenbud, \textit{Algebra structures for finite free resolutions, and some structure theorems for ideals of codimension {$3$}}, Amer. J. Math. \textbf{99} (1977), 447--485.

 \bibitem{BM16} J.~A.~Boswell and V.~Mukundan, \textit{Rees algebras of almost linearly presented ideals}, J. Algebra \textbf{460} (2016), 102--127.

 \bibitem{Buse} L.~Bus\'e, \textit{On the equations of the moving curve ideal of a rational algebraic plane curve}, J. Algebra \textbf{321} (2009), no. 8, 2317--2344.

 \bibitem{CP22} M.~Cooper and E.~F.~Price, \textit{Bounding the degrees of the defining equations of Rees rings for certain determinantal and Pfaffian ideals}, J. Algebra \textbf{606} (2022), 613--653. 
  
 \bibitem{Costantini} A.~Costantini, \textit{Cohen-Macaulay fiber cones and defining ideal of Rees algebras of modules}, Women in Commutative Algebra -- Proceedings of the 2019 WICA Workshop, Association for Women in Mathematics Series, vol. 29, Springer (2022). 

 \bibitem{CDG22} A.~Costantini, B.~Drabkin and L.~Guerrieri, \textit{The Rees algebra of ideals of star configurations}, Linear Algebra Appl. \textbf{645} (2022), 91--122.

 \bibitem{DRS18} A.~Doria, Z.~Ramos and A.~Simis, \textit{Linearly presented perfect ideals of codimension 2 in three variables}, J. Algebra \textbf{512} (2018), 216--251.

 \bibitem{EN62} J.~A.~Eagon and D.~G.~Northcott, \textit{Ideals defined by matrices and a certain complex associated with them}, Proc~ Roy.~Soc. Ser. A \textbf{269} (1962), 188--204.

 \bibitem{Eisenbud} D.~Eisenbud, \textit{Commutative Algebra With a Wiew Towards Algebraic Geometry}, Springer, 1995. 

 \bibitem{GST18} M.~Garrousian, A.~Simis and S.~O.~Tohaneanu, \textit{A blowup algebra for hyperplane arrangements}, Algebra Number Theory \textbf{12} (2018), 1401–-1429. 
  
 \bibitem{Macaulay2} D. R. Grayson and M. E. Stillman, Macaulay2, a software system for research in algebraic geometry. Available at http://www.math.uiuc.edu/Macaulay2/
  
 \bibitem{HSV81} J.~Herzog, A.~Simis and W.~Vasconcelos, \textit{Koszul homology and blowing-up rings}, in: Commutative Algebra, Proceedings, Trento, 1981, in: Lect. Notes Pure Appl. Math., vol. 84, Marcel Dekker, 1983, pp.79--169. 

 \bibitem{Hironaka} H. Hironaka, \textit{Resolution of singularities of an algebraic variety over a field of characteristic zero. I, II}. Ann. of Math. (2)\textbf{79} (1964), 109--203; ibid. (2)\textbf{79} (1964), 205--326.

 \bibitem{HU88} C.~Huneke and B.~Ulrich, \textit{Residual intersections}, J. Reine Angew Math. \textbf{390} (1988), 1--20.

 \bibitem{HU90} C.~Huneke and B.~Ulrich, \textit{Generic residual intersections}, Lecture Notes in Math. \textbf{1430}, Springer-Verlag, Berlin, 1990, 47--60.
  
 \bibitem{KPU17} A.~Kustin, C.~Polini and B.~Ulrich, \textit{The equations defining blowup algebras of height three Gorenstein ideals}, Algebra Number Theory \textbf{11} (2017), 1489--1525.
  
 \bibitem{MU96} S.~Morey and B.~Ulrich, \textit{Rees algebras of ideals of low codimension}, Proc. Amer. Math. Soc. \textbf{124} (1996), 3653--3661.
  
 \bibitem{Nguyen14} P.~H.~Lan Nguyen, \textit{On Rees algebras of linearly presented ideals}, J. Algebra \textbf{420} (2014), 186–-200.

 \bibitem{Nguyen17} P.~H.~Lan Nguyen, \textit{On Rees algebras of linearly presented ideals in three variables}, J. Pure Appl. Algebra \textbf{221} (2017), 2180–-2198.

 \bibitem{RS14} Z.~Ramos and A.~Simis, \textit{Symbolic powers of perfect ideals of codimension 2 and birational maps}, J. Algebra \textbf{413} (2014), 153--197.

 \bibitem{SGD97} T.~Sederberg, R.~Goldman, and H.~Du, \textit{Implicitizing rational curves by the method of moving algebraic curves}, J. Symbolic Comput. \textbf{23} (1997), 153--175.

 \bibitem{SUV93} A.~Simis, B.~Ulrich and W.~Vasconcelos, \textit{Jacobian dual fibrations}, Amer. J. Math. \textbf{115} (1993), 47--75.
  
 \bibitem{SUV03} A.~Simis, B.~Ulrich, and W.~V.~Vasconcelos, \textit{Rees algebras of modules}, Proc. London Math. Soc. \textbf{87} (2003), 610--646. 

 \bibitem{U94} B. Ulrich, \textit{Artin-Nagata properties and reductions of ideals}, Contemp. Math. \textbf{159} (1994), 373--400.

 \bibitem{UV93} B.~Ulrich and W.~V. Vasconcelos, \textit{The equations of Rees algebras of ideals with linear presentation}, Math. Z. \textbf{214} (1993), 79--92.

 \bibitem{Vasconcelos91} W.~V.~Vasconcelos, \textit{On the equations of Rees algebras}, J. Reine Angew. Math. \textbf{418} (1991), 189--218.
  
 \bibitem{Weaver1} M.~Weaver, \textit{On Rees algebras of ideals and modules over hypersurface rings}, \texttt{arxiv:2112.02148}. 

 \bibitem{Weaver2} M.~Weaver, \textit{The equations of Rees algebras of height three Gorenstein ideals in hypersurface rings}, \texttt{arxiv:2301.06539}. 
\end{thebibliography}
\end{document}